\numberwithin{equation}{section}
\theoremstyle{definition}\newtheorem{definition}{Definition}[section]
\newtheorem{remark}[definition]{Remark}
\newtheorem{example}[definition]{Example}}
\newtheorem{lemma}[definition]{Lemma}
\newtheorem{theorem}[definition]{Theorem}
\newtheorem{corollary}[definition]{Corollary}
\newcommand{\M}{\operatorname{M}}
\newcommand{\C}{\mathbb{C}}
\newcommand{\notembed}[1]{\not\prec_{#1}}
\newcommand{\embed}[1]{\prec_{#1}}
\newcommand{\F}{\mathbb{F}}
\newcommand{\cR}{\mathcal{R}}
\newcommand{\actson}{\curvearrowright}
\newcommand{\SL}{\operatorname{SL}}
\newcommand{\rL}{\operatorname{L}}
\newcommand{\Aut}{\operatorname{Aut}}
\newcommand{\N}{\mathbb{N}}
\newcommand{\T}{\mathbb{T}}
\newcommand{\Z}{\mathbb{Z}}
\newcommand{\cF}{\mathcal{F}}
\newcommand{\cV}{\mathcal{V}}
\newcommand{\id}{\mathord{\operatorname{id}}}
\newcommand{\si}{\sigma}
\newcommand{\recht}{\rightarrow}
\newcommand{\cU}{\mathcal{U}}
\newcommand{\vphi}{\varphi}
\newcommand{\R}{\mathbb{R}}
\newcommand{\al}{\alpha}
\newcommand{\eps}{\varepsilon}
\newcommand{\Tr}{\operatorname{Tr}}
\newcommand{\ovt}{\overline{\otimes}}
\newcommand{\B}{\operatorname{B}}
\newcommand{\om}{\omega}
\newcommand{\cP}{\mathcal{P}}
\newcommand{\cZ}{\mathcal{Z}}
\newcommand{\Ker}{\operatorname{Ker}}
\newcommand{\cK}{\mathcal{K}}
\newcommand{\cH}{\mathcal{H}}
\newcommand{\cJ}{\mathcal{J}}
\newcommand{\ot}{\otimes}
\newcommand{\cL}{\mathcal{L}}
\newcommand{\Lambdatil}{\widetilde{\Lambda}}
\newcommand{\dis}{\displaystyle}
\newcommand{\Ad}{\operatorname{Ad}}
\newcommand{\cG}{\mathcal{G}}
\newcommand{\cM}{\mathcal{M}}
\newcommand{\dpr}{^{\prime\prime}}
\newcommand{\be}{\beta}
\newcommand{\vphitil}{\widetilde{\vphi}}
\newcommand{\rB}{\operatorname{B}}
\newcommand{\m}{\mathord{\text{\rm m}}}
\newcommand{\lspan}{\operatorname{span}}
\newcommand{\Cal}{\mathcal}
\newcommand{\Mtil}{\widetilde{M}}
\newcommand{\Ntil}{\widetilde{N}}
\newcommand{\D}{\operatorname{D}}
\newcommand{\Stab}{\operatorname{Stab}}
\newcommand{\rO}{\operatorname{O}}
\newcommand{\PSL}{\operatorname{PSL}}
\newcommand{\Mbol}{M^\circ}
\newcommand{\rZ}{\operatorname{Z}}
\newcommand{\Ufin}{\mathcal{U}_{\rm fin}}
\newcommand{\binor}{\otimes_{\rm binor}}
\newcommand{\Nop}{N^{\rm op}}
\newcommand{\cN}{\mathcal{N}}
\newcommand{\mybim}[2]{$\bigl(#1\bigr)$--$\bigl(#2\bigr)$--bimodule}
\newcommand{\Ytil}{\widetilde{Y}}
\newcommand{\cS}{\mathcal{S}}
\newcommand{\Char}{\operatorname{Char}}
\newcommand{\tpr}{^{\prime\prime\prime}}
\newcommand{\Om}{\Omega}
\newcommand{\bim}[3]{\mathord{\raisebox{-0.4ex}[0ex][0ex]{\scriptsize $#1$}{#2}\hspace{-0.2ex}\raisebox{-0.4ex}[0ex][0ex]{\scriptsize $#3$}}}
\newcommand{\cb}[1]{\| #1 \|_{\text{\rm cb}}}
\begin{document}

\begin{center}
{\LARGE\bf Group measure space decomposition of II$_1$ factors
 \vspace{0.5ex}\\ and W*-superrigidity}

\bigskip

{\sc by Sorin Popa\footnote{Partially supported by NSF Grant
DMS-0601082}\footnote{Mathematics Department; University of
    California at Los Angeles, CA 90095-1555 (United States).
    \\ E-mail: popa@math.ucla.edu} and Stefaan Vaes\footnote{Partially
    supported by ERC Starting Grant VNALG-200749, Research
    Programme G.0231.07 of the Research Foundation --
    Flanders (FWO) and K.U.Leuven BOF research grant OT/08/032.}\footnote{Department of Mathematics;
    K.U.Leuven; Celestijnenlaan 200B; B--3001 Leuven (Belgium).
    \\ E-mail: stefaan.vaes@wis.kuleuven.be}}
\end{center}

\begin{abstract}\noindent
We prove a ``unique crossed product decomposition'' result for group
measure space II$_1$ factors $\rL^\infty(X)\rtimes \Gamma$ arising
from arbitrary free ergodic probability measure preserving (p.m.p.)
actions of groups $\Gamma$ in a fairly large family $\cG$, which
contains all free products of a Kazhdan group and a non-trivial
group, as well as certain amalgamated free products over an amenable
subgroup. We deduce that if $T_n$ denotes the group of upper
triangular matrices in $\PSL(n,\Z)$, then any free, mixing p.m.p.
action of $\Gamma=\PSL(n,\Z)*_{T_n} \PSL(n,\Z)$ is W$^*$-superrigid,
i.e. any isomorphism between $\rL^\infty(X)\rtimes \Gamma$ and an
arbitrary group measure space factor $\rL^\infty(Y)\rtimes \Lambda$,
comes from a conjugacy of the actions. We also prove that for many
groups $\Gamma$ in the family $\cG$, the Bernoulli actions of
$\Gamma$ are W$^*$-superrigid.
\end{abstract}

\section{Introduction} \label{sec.intro}

Rigidity results have by now appeared in many areas of mathematics,
and in several forms. The most frequently encountered is when two
mathematical objects with rich structure that are known to be
equivalent in some ``weak sense'', which ignores part of the
structure, are shown to be isomorphic as objects with the full
structure. In the best of cases, such a result will also show that
morphisms which are equivalences in the weak sense are equivalent to
isomorphisms in the stronger category, thus leading to complete
classification results and calculation of invariants.

Von Neumann algebras (also called W$^*$-algebras) provide a most
natural framework for rigidity. In fact, such phenomena are at the
very core of this subject, relating it at the outset with group
theory and ergodic theory. This is due to the Murray and von Neumann
classical {\it group measure space construction}, which associates
to a free ergodic measure preserving action $\Gamma \actson X$, of a
countable group $\Gamma$ on a probability space $(X,\mu)$, a von
Neumann algebra (called II$_1$ {\it factor}) $\rL^\infty(X)\rtimes
\Gamma$, through a crossed product type construction \cite{MvN1}. The
study of these objects in terms of their ``initial data'' $\Gamma
\actson X$ has been a central theme of the subject since the early
1940s. It soon led to a new area in ergodic theory, studying group
actions up to {\it orbit equivalence} ({\it OE}), i.e.\ up to
isomorphism of probability spaces carrying the orbits of actions
onto each other, since an OE of actions $\Gamma \curvearrowright X$,
$\Lambda \curvearrowright Y$ has been shown in \cite{Si} to be ``the same
as'' an algebra isomorphism $\rL^\infty(X)\rtimes \Gamma \simeq
\rL^\infty(Y)\rtimes \Lambda$ taking the {\it group measure space Cartan subalgebras}
$\rL^\infty(X)$, $\rL^\infty(Y)$ onto each other.

Thus, {\it W$^*$-equivalence} (or {\it von Neumann equivalence}) of group actions, requiring
isomorphism of their group measure space algebras, is weaker than
OE. Since there are examples of non-OE actions whose group measure
space factors are isomorphic \cite{CJ,OP2}, it is in general
strictly weaker. In turn, it has been known since \cite{MvN43,D59}
that OE is much weaker than classical {\it conjugacy} (or {\it
isomorphism}), which for free actions $\Gamma \curvearrowright X$,
$\Lambda \curvearrowright Y$ requires isomorphism of probability
spaces $\Delta: (X,\mu) \simeq (Y,\nu)$ satisfying $\Delta \Gamma
\Delta^{-1}=\Lambda$ (so in particular $\Gamma\simeq \Lambda$).
Rigidity in this context occurs whenever one can establish that
W$^*$- or OE-equivalence of certain group actions $\Gamma
\curvearrowright X$, $\Lambda \curvearrowright Y$, forces the
groups, or the actions, to share some common properties. The ideal
such result, labeled W$^*$- (respectively OE-) {\it superrigidity},
recovers the isomorphism class of $\Gamma \curvearrowright X$, from
its W$^*$-class (resp.\ OE-class).

W$^*$- and OE-rigidity can only occur for non-amenable groups, since
by classical results of Connes \cite{C76}, all II$_1$ factors
$\rL^\infty(X)\rtimes \Gamma$ with $\Gamma$ amenable are mutually
isomorphic and by \cite{OW,CFW} they are undistinguishable under OE
as well. But the non-amenable case is extremely complex and although
signs of rigidity where detected early on \cite{MvN43,D63,Mc70,C75},
for many years progress has been slow, despite several
breakthrough discoveries in the 1980s \cite{C80,Zi80,CJ85,
CowHaag}. This changed dramatically over the last decade, with the
advent of a variety of striking rigidity results, in both group
measure space II$_1$ factors and OE ergodic theory:
\cite{Fu1,Fu2,G2,G1,PBetti,MoSh,Hj,
Oz,P1,P2,P0,HjKe,IPP,P-gap,PV1,Pet,kida1,
V-bim,OP1,Ioana,OP2,PV2,PV4,PV3,C-H,kida-amal}.

Our purpose in this paper is to investigate the most ``extreme'' of
the W$^*$-rigidity phenomena mentioned above, i.e.\
W$^*$-superrigidity. Thus, we seek to find classes of group actions
$\Gamma \actson X$ with the property that any isomorphism between
$\rL^\infty(X)\rtimes \Gamma$ and any other group measure space
factor $\rL^\infty(Y)\rtimes \Lambda$, arising from an arbitrary
free ergodic p.m.p.\  action $\Lambda \actson Y$, comes from a
conjugacy of the actions $\Gamma\actson X$, $\Lambda \actson Y$.

Note that W$^*$-superrigidity for an action $\Gamma \actson X$ is
equivalent to the ``sum'' between its OE-superrigidity and the
uniqueness, up to unitary conjugacy, of $\rL^\infty(X)$ as a group
measure space Cartan subalgebra in $\rL^\infty(X)\rtimes \Gamma$.
This makes W$^*$-superrigidity results extremely difficult to
obtain, since each one of these problems is notoriously hard. But
while several large families of OE-superrigid actions have been
discovered over the last ten years
\cite{Fu2,P0,P-gap,kida1,Ioana,kida-amal}, unique Cartan
decomposition proved to be much more challenging to establish, and
the only existing results cover very particular group actions. Thus,
a first such result, obtained by Ozawa and the first named author in
\cite{OP1}, shows that given any profinite action $\Gamma \actson
X$, of a product of free groups $\Gamma = \F_{n_1}\times \cdots
\times \F_{n_k}$, with $k\geq 1$, $2\leq n_i \leq \infty$, any
Cartan subalgebra of $M=\rL^\infty(X) \rtimes \Gamma$ (i.e.\ any
maximal abelian subalgebra whose normalizer generates $M$), is
unitary conjugate to $\rL^\infty(X)$. A similar result, covering a
more general class of groups $\Gamma$, was then proved in
\cite{OP2}. More recently, Peterson showed in \cite{jpete} that
factors arising from profinite actions of non-trivial free products
$\Gamma=\Gamma_1
* \Gamma_2$, with at least one of the $\Gamma_i$ not having the
Haagerup property, have unique group measure space Cartan
subalgebra, up to unitary conjugacy. But so far, none of these group
actions could be shown to be OE-superrigid. Nevertheless, an
intricate combination of results in \cite{Ioana,OP2,jpete} were used
to prove the existence of virtually W$^*$-superrigid group actions
$\Gamma \actson X$ in \cite{jpete}, by a Baire category argument
(following \cite{Fu1}, {\it virtual} means that the ensuing
conjugacy of $\Gamma \actson X$ and the target actions $\Lambda
\actson Y$ is up to finite index subgroups of $\Gamma, \Lambda$).

In this paper, we establish a very general unique Cartan
decomposition result, which allows us to obtain a wide range of
W$^*$-superrigid group actions. Thus, we first prove the uniqueness,
up to unitary conjugacy, of the group measure space Cartan
subalgebra in the II$_1$ factor given by an \emph{arbitrary} free
ergodic p.m.p.\ action of any group $\Gamma$ belonging to a large
family $\cG$ of amalgamated free product groups. By combining this
with Kida's OE-superrigidity in \cite{kida-amal}, we deduce
that any free, mixing p.m.p.\ action of $\Gamma=\PSL(n,\Z)*_{T_n}
\PSL(n,\Z)$ is W$^*$-superrigid. In combination with
\cite{P0,P-gap}, we prove that for many groups $\Gamma$ in the
family $\cG$, the Bernoulli actions of $\Gamma$ are
W$^*$-superrigid. In combination with Gaboriau's work \cite{G2} on cost,
we find new groups $\Gamma$ for which all group measure space
II$_1$ factors $\rL^\infty(X) \rtimes \Gamma$ have trivial fundamental group.

\subsection{Statements of main results}

More precisely, our family $\cG$ contains all non-trivial free
products $\Gamma = \Gamma_1 * \Gamma_2$ with $\Gamma_1$ satisfying
one of the following rigidity properties: $\Gamma_1$ contains a
non-amenable subgroup with the relative property (T) or $\Gamma_1$
contains two non-amenable commuting subgroups. The family $\cG$ also
contains certain amalgamated free products $\Gamma_1 *_\Sigma
\Gamma_2$ over amenable subgroups $\Sigma$, see Definition
\ref{def.G}.

Our results can be summarized as follows.

\begin{theorem}[See Theorem \ref{thm.uniqueCartan}] \label{thm.uniqueCartan-intro}
Let $\Gamma$ be a group in the family $\cG$ and $\Gamma \actson (X,\mu)$
an arbitrary free ergodic p.m.p.\ action. Denote $M = \rL^\infty(X) \rtimes \Gamma$.
Whenever $\Lambda \actson (Y,\eta)$ is a free ergodic p.m.p.\ action such
that $M = \rL^\infty(Y) \rtimes \Lambda$, there exists a unitary $u \in M$
such that $\rL^\infty(Y) = u \rL^\infty(X) u^*$.
\end{theorem}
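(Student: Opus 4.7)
The two subalgebras $A := \rL^\infty(X)$ and $B := \rL^\infty(Y)$ are both regular Cartan subalgebras (in particular MASAs) of the II$_1$ factor $M$. My plan is to establish the intertwining $B \embed{M} A$ in Popa's intertwining-by-bimodules sense; a standard Cartan-uniqueness upgrading (Popa's conjugacy lemma for Cartan subalgebras, using that both $A$ and $B$ are regular) then promotes this to a unitary conjugacy $uBu^{*}=A$ for some $u\in\cU(M)$. So the real work is to produce the intertwining.

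To obtain it, I would exploit the amalgamated free product decomposition $M = (A\rtimes\Gamma_1)*_{A\rtimes\Sigma}(A\rtimes\Gamma_2)$, with $\Sigma$ amenable (and $\Sigma=\{e\}$ in the pure free-product case). Attach to this the usual s-malleable deformation $(\widetilde M,(\alpha_t))$ obtained from the free-product ``doubling'' trick, as used in \cite{IPP} and in Popa's later analysis of amalgamated free products. The rigidity hypothesis on $\Gamma_1$---either a non-amenable relatively rigid subgroup, or two commuting non-amenable subgroups---provides the required spectral-gap/rigidity input inside $M_1 := A\rtimes\Gamma_1$. Feeding this into the deformation argument for the unitaries normalising $B$ (which generate all of $M$) forces $\alpha_t\to\id$ uniformly on a non-zero corner $Bp$, with $p\in B$. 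Popa's transversality trick then upgrades this uniform convergence to the intertwining $Bp\embed{M} M_i$ for some $i\in\{1,2\}$.

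From $Bp\embed{M}M_i$, standard amalgamated free product techniques (in the spirit of \cite{IPP}, as refined in subsequent work on amalgams over amenable subalgebras) sharpen the intertwining to $Bp\embed{M}A\rtimes\Sigma$. The last and subtlest step is the descent from $A\rtimes\Sigma$ down to $A$. Here the amenability of $\Sigma$ and the Cartan-regularity of $B$ must be combined with the mixing of the $\Gamma$-action on $\Gamma/\Sigma$ built into the definition of $\cG$: since $A\rtimes\Sigma$ is amenable over $A$ and $B$ is a regular Cartan subalgebra of $M$, a normaliser-control argument allows one to ``absorb'' the $\Sigma$-direction into $A$. I expect this descent (vacuous in the pure free-product case) to be the main obstacle: the amalgamated case demands a delicate coupling between Cartan regularity of $B$, amenability of $\Sigma$, and the $\Gamma$-geometry of cosets of $\Sigma$. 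Once $B\embed{M}A$ is secured, the Cartan conjugacy lemma finishes the proof.
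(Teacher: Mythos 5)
Your outline correctly identifies the overall target (obtain $B\embed{M}A$, then apply Popa's conjugacy criterion for Cartan subalgebras) and correctly names the machinery available: the amalgamated free product decomposition $M = (A\rtimes\Gamma_1)*_{A\rtimes\Sigma}(A\rtimes\Gamma_2)$, the s-malleable deformation $\alpha_t$ of \cite{IPP}, the dichotomy ``intertwine into $P=A\rtimes\Sigma$ or into an $M_i$,'' and the use of the quasi-malnormality condition (condition 3 of Definition~\ref{def.G}) to descend from $A\rtimes\Sigma$ to $A$.

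However, there is a genuine gap at exactly the step you treat as routine: ``feeding the rigidity of $\Gamma_1$ into the deformation argument for the unitaries normalising $B$ forces $\alpha_t\to\id$ uniformly on a non-zero corner $Bp$.'' This does not follow from any standard spectral-gap or property (T) argument. The relative rigidity of $\Gamma_1<\Gamma$ (or the commuting non-amenable subgroups) controls $\alpha_t$ on $\rL(\Gamma_1)\subset M_1$; it gives you \emph{no} a priori control on $\alpha_t$ restricted to $B$ or to the normalising unitaries $(v_s)_{s\in\Lambda}$, because $B$ and $\Lambda$ sit in $M$ in an entirely unknown position relative to the $\Gamma$-decomposition. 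Earlier works that obtained unique-Cartan statements (Ozawa--Popa, Peterson) imposed strong structural hypotheses (profiniteness) on the action precisely to bridge this gap; the present paper handles arbitrary free ergodic actions, and its main innovation is exactly the ``transfer of rigidity'' needed here. Concretely, Lemmas~\ref{lemma.crucial} and \ref{lemma.spectralgap} transfer the source-side rigidity into the existence of a sequence of canonical unitaries $v_{s_k}\in B\rtimes\Lambda$ satisfying (i) $\|v_{s_k}-\m_\rho(v_{s_k})\|_2\le\eps$ uniformly in $k$ for a fixed $\rho<1$, and (ii) $\|E_P(x\,v_{s_k}\,y)\|_2\to0$ for all $x,y\in M$. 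The construction is a genuinely new argument: one forms the positive definite functions $\psi_n(s)=\tau(v_s^*\vphi_n(v_s))$ on $\Lambda$ and the associated c.p.\ maps $\theta_n$, and uses non-amenability of $\Gamma_1$ (or the commuting non-amenable subgroups) through a weak-containment contradiction to rule out the alternative. None of this appears in your proposal.

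Even granting the transfer, a second substantial step is missing: turning conditions (i)--(ii) for \emph{some} sequence of normalising unitaries into the uniform estimate $\tau(w^*\m_{\rho_0}(w))\ge\delta$ for \emph{all} $w\in\cU(B)$, which is what Theorem~\ref{thm.IPP} (the refined IPP dichotomy) actually requires as input. This is Lemma~\ref{lem.lengths}, and it is not ``Popa's transversality trick'': it uses abelianness of $B$ through the identity $p=v_{s_k}wv_{s_k}^*\,w\,v_{s_k}w^*v_{s_k}^*\,w^*$ together with careful Herz-Schur multiplier truncations (Lemma~\ref{lemma.multiplier}) and a word-combinatorial argument (Lemma~\ref{lemma.combinatorial}) on reduced words of the amalgamated free product. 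This uniform-norm control step is essential because the argument involves products of four unitaries, so $\|\cdot\|_2$ estimates alone would not close. Your proposal compresses both of these pieces of new mathematics into a single sentence and would not survive an attempt to write it out in detail.
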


We mention that the most general version of the above theorem (see
Theorem \ref{thm.uniqueCartan}) allows to handle amplifications of
the group measure space factors $M$ as well.

\begin{theorem}[See Theorem \ref{thm.kida}]\label{thm.kida-intro}
Let $n \geq 3$ and denote by $T_n$ the subgroup of upper triangular matrices
in $\PSL(n,\Z)$. Put $\Gamma = \PSL(n,\Z) *_{T_n} \PSL(n,\Z)$. Then,
every free p.m.p.\ mixing action of $\Gamma$ is W$^*$-superrigid.
\end{theorem}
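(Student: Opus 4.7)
The plan is direct: combine Theorem \ref{thm.uniqueCartan-intro} (unique group measure space Cartan subalgebra) with Kida's OE-superrigidity theorem from \cite{kida-amal} for amalgamated free products of the form $\PSL(n,\Z) *_{T_n} \PSL(n,\Z)$. The first input turns every W$^*$-equivalence into a Cartan-preserving isomorphism, hence into an orbit equivalence, and the second turns the resulting orbit equivalence into a conjugacy; concatenating gives precisely W$^*$-superrigidity.

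First I verify that $\Gamma := \PSL(n,\Z) *_{T_n} \PSL(n,\Z)$ lies in the family $\cG$. Since $T_n$ is solvable, it is amenable, so $\Gamma$ is an amalgamated free product over an amenable subgroup, and only one of the rigidity conditions on the vertex group $\PSL(n,\Z)$ in Definition \ref{def.G} remains to be checked. For $n \geq 3$, $\PSL(n,\Z)$ contains, via the upper-left block embedding, a copy of $\SL(2,\Z) \ltimes \Z^2$, and the pair $(\SL(2,\Z) \ltimes \Z^2,\, \Z^2)$ has the relative property (T) by Burger; this is a non-amenable subgroup of $\PSL(n,\Z)$ with relative property (T), so $\Gamma \in \cG$ and Theorem \ref{thm.uniqueCartan-intro} applies to every free ergodic p.m.p.\ action of $\Gamma$.

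Now fix a free, mixing, p.m.p.\ action $\Gamma \actson (X,\mu)$, set $M = \rL^\infty(X) \rtimes \Gamma$, and let $\pi : M \recht \rL^\infty(Y) \rtimes \Lambda$ be any isomorphism onto a group measure space factor coming from a free ergodic p.m.p.\ action $\Lambda \actson (Y,\eta)$. Inside $M$, both $\rL^\infty(X)$ and $\pi^{-1}(\rL^\infty(Y))$ are group measure space Cartan subalgebras, so Theorem \ref{thm.uniqueCartan-intro} yields a unitary $u \in M$ with $u\, \rL^\infty(X)\, u^* = \pi^{-1}(\rL^\infty(Y))$. Replacing $\pi$ by $\pi \circ \Ad(u^*)$, we may assume $\pi(\rL^\infty(X)) = \rL^\infty(Y)$; Singer's theorem \cite{Si} then converts $\pi$ into an orbit equivalence $\Delta$ between $\Gamma \actson (X,\mu)$ and $\Lambda \actson (Y,\eta)$. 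Since $\Gamma \actson (X,\mu)$ is free, mixing and p.m.p., Kida's theorem \cite{kida-amal} forces $\Delta$ to come from a conjugacy of the two actions, and unwinding shows that the original $\pi$ is implemented by this conjugacy up to $\Ad(u)$ and the standard character cocycle.

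The deep work sits entirely in the earlier Theorem \ref{thm.uniqueCartan-intro} and in Kida \cite{kida-amal}; the deduction itself is formal assembly. The main obstacles I expect are narrow bookkeeping checks: matching $(\PSL(n,\Z), T_n)$ precisely against the technical conditions of Definition \ref{def.G}, and confirming that \cite{kida-amal} delivers an honest conjugacy (not merely a virtual one) for every free mixing action of $\Gamma$ considered here. If these formalities go through as expected, no further input beyond Theorem \ref{thm.uniqueCartan-intro} and Kida's theorem is required.
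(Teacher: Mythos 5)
Your strategy is precisely the paper's: Theorem \ref{thm.uniqueCartan} conjugates the two group measure space Cartan subalgebras, Singer's theorem then gives an orbit equivalence, and Kida's OE-superrigidity \cite{kida-amal} upgrades this to a conjugacy of the actions.

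The one gap is the verification that $\Gamma = \PSL(n,\Z) *_{T_n} \PSL(n,\Z)$ belongs to $\cG$, which you yourself flag at the end as unfinished bookkeeping. Definition \ref{def.G} has \emph{three} conditions; after checking the amenability of $T_n$ and a rigidity property of $\PSL(n,\Z)$ you declare the verification complete, but condition (3) --- that $\bigcap_{i=1}^k g_i T_n g_i^{-1}$ be finite for some $g_1,\ldots,g_k \in \Gamma$ --- is still required and is not a formality: Section \ref{subsec.twoCartan} exhibits amalgamated free products satisfying conditions (1) and (2) of Definition \ref{def.G} but not (3), for which the conclusion of Theorem \ref{thm.uniqueCartan} fails. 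Here the check is short (take $g$ to be the image in $\PSL(n,\Z)$ of a signed permutation matrix reversing the standard basis, so that $g T_n g^{-1}$ is the lower-triangular subgroup and $T_n \cap g T_n g^{-1}$ is the finite diagonal subgroup of $\PSL(n,\Z)$), but it must be made. A minor side comment: invoking Burger's relative property (T) for $\Z^2 < \SL(2,\Z) \ltimes \Z^2$ is an unnecessary detour, since for $n \geq 3$ the group $\PSL(n,\Z)$ itself has Kazhdan's property (T) and is non-amenable, so $\Gamma_1 = \PSL(n,\Z)$ already satisfies Definition \ref{def.G}.1 with $\Gamma_1$ as its own relatively rigid subgroup.
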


Whenever $\Gamma$ is an infinite group and $(X_0,\mu_0)$ a non-trivial
probability space, denote by $\Gamma \actson (X_0,\mu_0)^\Gamma$ the
\emph{Bernoulli action} of $\Gamma$ with base space $(X_0,\mu_0)$,
given by $(g \cdot x)_h = x_{g^{-1}h}$ for all $g,h \in \Gamma$ and $x \in X_0^\Gamma$.

\begin{theorem}[See Theorem \ref{thm.stable-superrigid}]\label{thm.superrigid-Bernoulli}
The Bernoulli action $\Gamma \actson (X_0,\mu_0)^\Gamma$ of all of the following
groups is W$^*$-superrigid.
\begin{itemize}
\item $\Gamma = \Gamma_1 *_\Sigma \Gamma_2$ with the following assumptions:
$\Gamma_1$ has property (T), $\Sigma$ is an infinite, amenable, proper normal
subgroup of $\Gamma_2$ and there exist $g_1,\ldots,g_k \in \Gamma_1$ such
that $\bigcap_{i=1}^k g_i \Sigma g_i^{-1} = \{e\}$.

For instance, we can take $\Gamma = \PSL(n,\Z) *_\Sigma (\Sigma \times \Lambda)$,
where $\Sigma < T_n$ is an infinite subgroup of the upper triangular matrices and
$\Lambda$ is an arbitrary non-trivial group.
\item $\Gamma = (H \times H) *_\Sigma \Gamma_2$ where $H$ is a finitely generated
non-amenable group with trivial center, $\Sigma$ is an infinite amenable subgroup
of $H$ that we embed diagonally in $H \times H$ and $\Sigma$ is a proper normal subgroup of $\Gamma_2$.
\end{itemize}
\end{theorem}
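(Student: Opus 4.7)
The plan is to combine Theorem \ref{thm.uniqueCartan-intro} with Popa's cocycle superrigidity theorem for Bernoulli actions from \cite{P0,P-gap}. First, I would verify that both families of groups belong to $\cG$: for the first family, $\Gamma_1$ has property (T), so it is a non-amenable subgroup of $\Gamma_1 *_\Sigma \Gamma_2$ with the relative property (T); for the second family, the two coordinate copies of $H$ inside $H \times H$ provide a pair of commuting non-amenable subgroups. The hypotheses that $\Sigma$ is amenable and sits as a proper normal subgroup in $\Gamma_2$, together with the intersection condition $\bigcap_i g_i \Sigma g_i^{-1} = \{e\}$, match the conditions under which the amalgamated free product is placed in $\cG$ by Definition \ref{def.G}.

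Now suppose we have an isomorphism $\rL^\infty(X)\rtimes\Gamma \simeq \rL^\infty(Y)\rtimes\Lambda$ for an arbitrary free ergodic p.m.p.\ action $\Lambda \actson (Y,\eta)$. By Theorem \ref{thm.uniqueCartan-intro}, after conjugating by a unitary we may assume $\rL^\infty(X) = \rL^\infty(Y)$ inside the common factor $M$. By the Singer correspondence \cite{Si} recalled in the introduction, this forces $\Gamma \actson X$ and $\Lambda \actson Y$ to be orbit equivalent and produces the canonical measurable $1$-cocycle $\omega : \Gamma \times X \to \Lambda$. The problem reduces to showing that $\omega$ is cohomologous to a group homomorphism, for then the orbit equivalence untwists to a genuine conjugacy.

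To apply cocycle superrigidity for the Bernoulli action $\Gamma \actson (X_0,\mu_0)^\Gamma$, I would exhibit an infinite wq-normal subgroup of $\Gamma$ that either has the relative property (T) or contains two commuting non-amenable subgroups. In the first case, $\Gamma_1$ itself serves: because $\Sigma \subset \Gamma_1$ is normal in $\Gamma_2$, for every $g \in \Gamma_2$ we have $g \Gamma_1 g^{-1} \cap \Gamma_1 \supset g \Sigma g^{-1} = \Sigma$, which is infinite, so $\Gamma_1$ is commensurated -- hence wq-normal -- in $\Gamma$, and it has property (T). In the second case, $H \times H$ is wq-normal in $\Gamma$ by the same reasoning applied to the diagonal $\Sigma \subset H \times H$, and of course contains the two commuting non-amenable copies of $H$. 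In either case Popa's theorem \cite{P0,P-gap} yields that $\omega$ is cohomologous to a homomorphism $\delta : \Gamma \to \Lambda$; using freeness of $\Lambda \actson Y$, $\delta$ must be a group isomorphism and the orbit equivalence becomes a conjugacy.

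The main obstacle I expect is the passage to the stable formulation of Theorem \ref{thm.stable-superrigid}, where one must track amplifications $M^t$, apply the fullest form of the unique Cartan decomposition there, and invoke cocycle superrigidity with coefficients in $\Ufin$ so as to absorb the scalar cocycles that appear when $t \neq 1$. Verifying wq-normality in the precise sense demanded by these theorems, and in particular using the intersection condition $\bigcap_{i=1}^k g_i \Sigma g_i^{-1} = \{e\}$ to preclude spurious $\Gamma_1$-invariant structure on the fibers of the crossed product, will be the most technical step. The OE-to-conjugacy untwisting itself, after cocycle superrigidity is applied, is then standard and goes through unchanged from the non-stable case.
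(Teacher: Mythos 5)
Your high-level strategy (unique Cartan decomposition from Theorem \ref{thm.uniqueCartan-intro}, then cocycle superrigidity to untwist the resulting orbit equivalence) is the right one and is what the paper does. But the step where you supply the cocycle superrigidity for $\Gamma$ contains a genuine error, and the way you propose to fill it does not match what actually goes into the proof.

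You assert that $\Gamma_1$ (resp.\ $H\times H$) is \emph{commensurated} in $\Gamma = \Gamma_1 *_\Sigma \Gamma_2$ because, for $g\in\Gamma_2$, one has $g\Gamma_1 g^{-1}\cap\Gamma_1 \supset g\Sigma g^{-1}=\Sigma$. This is false: commensuration requires $[\Gamma_1 : g\Gamma_1 g^{-1}\cap\Gamma_1]<\infty$, whereas here the intersection is exactly $\Sigma$, which typically has \emph{infinite} index in $\Gamma_1$ (e.g.\ $\Sigma = T_n \subset \Gamma_1 = \PSL(n,\Z)$). You also check only $g \in \Gamma_2$ rather than all $g \in \Gamma$ (or, for wq-normality in the correct sense, all intermediate subgroups $\Gamma_1 \subset K \subsetneq \Gamma$). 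In fact the normalizer of $\Gamma_1$ in $\Gamma$ is just $\Gamma_1$ itself, so $\Gamma_1$ is certainly not w-normal, and the conclusion you draw -- that the cocycle superrigidity theorems of \cite{P0,P-gap} apply directly to $\Gamma \actson X$ with $\Gamma_1$ playing the role of the rigid subgroup -- does not follow from what you wrote.

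The paper takes a different and safer route. It applies Popa's theorems from \cite{P0,P-gap} only to the subgroup $\Gamma_1$ acting on $X$ (where the rigid subgroup $H$ is genuinely \emph{normal} in $\Gamma_1$ and the restricted action is a generalized Bernoulli action), and then propagates $\Ufin$-cocycle superrigidity from finite-index subgroups of $\Gamma_1$ to finite-index subgroups of $\Gamma$ by means of a permanence lemma (Lemma \ref{lemma.permanence}). That lemma crucially uses the normality $\Sigma\lhd\Gamma_2$ and the weak mixing of $\Sigma\actson X$, and proceeds by induction on word length in $\Gamma_1 *_\Sigma \Gamma_2$ via two untwisting principles extracted from \cite[Section 3]{P0}. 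This missing ingredient is the core technical content of the cocycle-superrigidity step, and it is what also makes the more general Theorem \ref{thm.stable-superrigid} (Gaussian actions, co-induced actions, where the full action of $\Gamma$ is not itself Bernoulli) go through. Finally, the condition $\bigcap_i g_i\Sigma g_i^{-1}=\{e\}$ is not used to "preclude spurious $\Gamma_1$-invariant structure" as you suggest: it enters only in the unique Cartan step (in the proof of Theorem \ref{thm.uniqueCartan}, via \cite[Theorem 6.16]{PV1}) to upgrade $B\prec_M A\rtimes\Sigma$ to $B\prec_M A$.
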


We in fact obtain a more general version of this result as Theorem
\ref{thm.stable-superrigid}, which covers generalized Bernoulli
actions, Gaussian actions and certain co-induced actions (see
Examples \ref{ex.firstex}, \ref{ex.secondex} and \ref{ex.Haagerup}).

Our methods also provide the following new examples of II$_1$
factors which cannot be written as group measure space factors.

\begin{theorem} \label{thm.no-Cartan}
Let $\Gamma \in \cG$ and assume that $\Gamma$ is ICC. Let $\Gamma \actson (X,\mu)$
be an arbitrary ergodic p.m.p.\ action and put $M = \rL^\infty(X) \rtimes \Gamma$.
Then, $M$ is a II$_1$ factor. If, for some $t > 0$, the II$_1$ factor $M^t$ admits
a group measure space decomposition, then the action $\Gamma \actson (X,\mu)$ must be free.
Thus, $\rL(\Gamma)$ and all the factors of the form
$\rL^\infty(X)\rtimes \Gamma$ corresponding to non-free actions
$\Gamma \actson X$, do not admit a group measure space
decomposition.
\end{theorem}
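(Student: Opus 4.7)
The plan has two parts. Factoriality follows by a standard Fourier argument: expand $z \in Z(M)$ as $z = \sum_g z_g u_g$ with $z_g \in \rL^\infty(X)$. Commutation with $\rL^\infty(X)$ forces $\operatorname{supp}(z_g) \subseteq X_g := \{x \in X : gx = x\}$, while commutation with each $u_h$ yields the equivariance $h \cdot z_g = z_{hgh^{-1}}$. Ergodicity of $\Gamma \actson X$ gives $z_e \in \C$; for $g \neq e$, the ICC hypothesis provides an infinite conjugacy class, and since $\|z_{hgh^{-1}}\|_2 = \|z_g\|_2$ while $\sum_g \|z_g\|_2^2 = \|z\|_2^2 < \infty$, we deduce $z_g = 0$.

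\textbf{Ubiquitous non-maximal abelianness.} For the main implication, I argue by contradiction: suppose $\Gamma \actson X$ is not essentially free while $M^t = \rL^\infty(Y) \rtimes \Lambda$ is a group measure space decomposition with $\Lambda \actson Y$ free and ergodic, and set $B := \rL^\infty(Y)$, a Cartan of $M^t$. Ergodicity and non-freeness together force $\mu(\{x \in X : \Stab(x) \neq \{e\}\}) = 1$, so for every non-null subset $E \subseteq X$ there is $g \in \Gamma \setminus \{e\}$ with $\mu(E \cap X_g) > 0$. The element $v := 1_{E \cap X_g} u_g$ then lies in $1_E M 1_E$ and commutes with $1_E \rL^\infty(X) 1_E$ (since $g$ fixes $E \cap X_g$ pointwise), yet does not belong to $1_E \rL^\infty(X) 1_E$. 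Hence $1_E \rL^\infty(X) 1_E$ fails to be maximal abelian in $1_E M 1_E$ for every non-null $E$, and this failure survives amplification: $p \rL^\infty(X)^t p$ is not maximal abelian in $p M^t p$ for any non-zero projection $p \in \rL^\infty(X)^t$.

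\textbf{Deriving the contradiction.} I now invoke the intertwining content of Theorem \ref{thm.uniqueCartan}. Its proof establishes, as an intermediate step preceding the use of freeness, the Popa intertwining $B \embed{M^t} \rL^\infty(X)^t$. Since $B$ is a Cartan of $M^t$, a standard consequence of Popa's intertwining-by-bimodules technique then produces a non-zero projection $p \in \rL^\infty(X)^t$ such that the corner $p \rL^\infty(X)^t p$ is itself a Cartan subalgebra of $p M^t p$; in particular, $p \rL^\infty(X)^t p$ is maximal abelian in $p M^t p$, contradicting the previous paragraph.

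\textbf{Main obstacle.} The key technical point is to extract the intertwining $B \embed{M^t} \rL^\infty(X)^t$ from the proof of Theorem \ref{thm.uniqueCartan} without requiring freeness of $\Gamma \actson X$. Freeness enters Theorem \ref{thm.uniqueCartan} only at the final stage, where an intertwining is lifted to a global unitary conjugacy; the intertwining itself is built from deformation/rigidity techniques exploiting the amalgamated free product structure of $\Gamma \in \cG$ together with ergodicity and the crossed product Fourier expansion, so this step should indeed go through in the ergodic-but-not-free setting.
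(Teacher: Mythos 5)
Your factoriality argument is standard and correct, and the paper uses the same (unstated) reasoning. You also correctly identify that the intertwining $B \embed{M^t} \rL^\infty(X)^t$ is available without freeness (the paper says exactly this: "Repeating the previous proof, it follows that $B \embed{M} \rL^\infty(X)$"). Your observation that ergodicity plus non-freeness forces $p\,\rL^\infty(X)^t\,p$ to fail to be maximal abelian in $pM^tp$ for every non-zero projection $p \in \rL^\infty(X)^t$ is correct and clean.

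The gap is in the contradiction step, where you assert that, $B$ being Cartan and $B \embed{M^t} \rL^\infty(X)^t$, ``a standard consequence of Popa's intertwining-by-bimodules technique'' produces a projection $p \in \rL^\infty(X)^t$ such that $p\,\rL^\infty(X)^t\,p$ is Cartan in $pM^tp$. This is not a standard consequence and it is false for a general abelian subalgebra $A$: e.g.\ if $A_0$ is Cartan in $M$ and $A \subsetneq A_0$ is an index-two subalgebra, then $A_0 \embed{M} A$, yet no corner $pAp$ is maximal abelian in $pMp$. What you actually obtain from the intertwining is a partial isometry $v$ and a $*$-homomorphism $\theta : p_1 B p_1 \to qAq$ with $bv = v\theta(b)$; conjugating by $v$ shows $\theta(p_1Bp_1)\,v^*v$ is Cartan in $v^*vMv^*v$, but the projection $v^*v$ lies only in $\theta(p_1Bp_1)' \cap M$, not in $A$ or even in $A'$, so one cannot conclude that a corner of $A$ itself is Cartan. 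Closing this gap under the actual hypotheses (that $A = \rL^\infty(X)$ is regular in the crossed product $M = A\rtimes\Gamma$, $\Gamma$ ICC and the action ergodic) is precisely the content of the result the paper cites here, namely \cite[Theorem 4.11]{OP1}, whose statement is that a Cartan $B$ with $B\embed{M}\rL^\infty(X)$ forces the action to be essentially free. Your paragraphs (a) and (b) together amount to an attempted reproof of that theorem, but (b) is exactly where its non-trivial content lives. The fix is simply to invoke \cite[Theorem 4.11]{OP1} after obtaining the intertwining, as the paper does.
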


\subsection{Rigidity of bimodules}

Another typical rigidity paradigm encountered in mathematics is when
certain invariants of mathematical objects which are supposed to
take values in a certain range, are shown to take values in a much
smaller subset. Group measure space II$_1$ factors provide a natural
framework for this type of rigidity as well, due to Murray and von
Neumann's {\it continuous dimension} and a related invariant for
II$_1$ factors $M$: the {\it fundamental group} $\cF(M)$. This is
defined as the set of ratios $\tau(p)/\tau(q)\in \R_+$, over all
projections $p,q\in M$ with $pMp\simeq qMq$, where $\tau$ denotes
the (unique) normalized trace (=dimension function) on $M$.
Equivalently, $\cF(M)=\{t>0 \mid M^t\simeq M\}$. Thus, since the
range of the dimension function is all $[0,1]$, the group $\cF(M)$
seems to always be equal to $\R_+$. Supporting evidence comes from
the case $M=\rL^\infty(X)\rtimes \Gamma$ with $\Gamma$ amenable,
when this is indeed the case (cf.\ \cite{MvN43}). So it came as a
striking surprise when Connes showed that all factors
$\rL^\infty(X)\rtimes \Gamma$ with $\Gamma$ an ICC Kazhdan group and
$\Gamma \actson X$ ergodic, have countable fundamental group (\cite{C80}).

The important progress in W$^*$-rigidity in recent years, led to the
first actual computations of fundamental groups $\cF(M)$ of group
measure space factors $M=\rL^\infty(X)\rtimes \Gamma$: from the first
such examples in \cite{PBetti}, where $\cF(M)=1$, to examples of
factors $M$ with $\cF(M)$ any prescribed countable subgroup of $\R_+$ \cite{P1}
(see also \cite{IPP,Houd}), and most recently examples
with $\cF(M)$ uncountable, yet different from $\R_+$ \cite{PV2,PV3}.
We mention in this respect that all Bernoulli actions $\Gamma
\actson (X,\mu)$ appearing in Theorem \ref{thm.superrigid-Bernoulli}
give rise to II$_1$ factors $\rL^\infty(X) \rtimes \Gamma$ with
trivial fundamental group (see Remark \ref{rem.fundamentalgroup}).
In the same spirit, Corollary \ref{cor.Sfactor} provides new examples
of groups $\Gamma$ such that $\rL^\infty(X) \rtimes \Gamma$ has trivial
fundamental group for all free ergodic p.m.p.\ actions $\Gamma \actson (X,\mu)$.

Another occurrence of the same type of rigidity paradigm is related
to Jones' index for subfactors, a numerical invariant for inclusions
of II$_1$ factors $N \subset M$ which, like the fundamental group,
is defined with the help of the Murray-von Neumann continuous
dimension. A priori, the range of the index could well be all
$\R_+$, but in his seminal work \cite{Jo83}, Jones proved that it is
subject to very surprising restrictions.

A unique feature of the II$_1$ factor framework is that it allows a
unifying approach to the two types of rigidity phenomena
(W$^*$-rigidity and restrictions on invariants), by considering
finite index bimodules between factors (as a generalization of
isomorphism between factors and their amplifications). Thus, by
explicitly calculating all bimodules between factors in a certain
class, one also obtains the fundamental group of the corresponding
factors, as well as all possible indices of its subfactors.

In the last section \ref{sec.bimodules}, we combine a generalization
of Theorem \ref{thm.uniqueCartan-intro} with the cocycle
superrigidity theorems of \cite{P0,P-gap} and techniques from
\cite{V-bim}, to give examples of group actions $\Gamma \actson
(X,\mu)$ such that the mere existence of a finite index bimodule
between $\rL^\infty(X) \rtimes \Gamma$ and $\rL^\infty(Y)\rtimes
\Lambda$ for an arbitrary free ergodic p.m.p.\ action $\Lambda
\actson (Y,\eta)$, implies that the groups $\Gamma,\Lambda$ are
virtually isomorphic and their actions $\Gamma \actson X$, $\Lambda
\actson Y$ are virtually conjugate in a very precise sense (see
Theorem \ref{thm.bimodules} and Example \ref{ex.bimodules}). In
particular, the fundamental group of any of these II$_1$ factors
$\rL^\infty(X)\rtimes \Gamma$ is trivial and the index of all their
subfactors is an integer.

\subsection{Comments on the proofs}

As we mentioned before, the main difficulty in obtaining
W$^*$-superrigidity lies in proving the uniqueness of the group measure
space Cartan decomposition. Indeed, because once such a result is
established, W$^*$-superrigidity can be derived from existing
OE-superrigidity results. In our case, \ref{thm.kida-intro} and
\ref{thm.superrigid-Bernoulli} will follow from our uniqueness of the group
measure space Cartan subalgebra in Theorem
\ref{thm.uniqueCartan-intro} and the OE superrigidity theorems in
\cite{P0,P-gap,kida-amal}. By using {\it intertwining subalgebras}
techniques (\cite{PBetti, P1}), in order to prove the uniqueness, up
to unitary conjugacy, of the Cartan subalgebra $A=\rL^\infty(X)$ of a
group measure space factor $M=A \rtimes \Gamma$, it is
sufficient to prove that given any other group measure space
decomposition $M=B \rtimes \Lambda$, $B = \rL^\infty(Y)$, there exists a $B$-$A$-bimodule
$\Cal H\subset \rL^2(M)$, which is finitely generated
over $A$, a property that we denote by $B \prec_M A$. To get such bimodules, we use the
deformation-rigidity theory introduced in
\cite{PBetti, P1, P2} (see \cite{P-ICM} for a survey), and in fact
the whole array of subsequent developments in \cite{IPP, PV1, P-gap,
V-bim, Houd, C-H}, etc. But in order to ``locate'' the position of the {\it target} Cartan subalgebra
$B$, with respect to the initial ({\it source})
Cartan subalgebra $A$, through these techniques, one needs some
amount of rigidity for one of the group actions and a deformation property
for the other
(like for example in 6.2 of \cite{PBetti}, 7.1 of \cite{P2}, 7.7 of
\cite{IPP}, 1.5 of \cite{P-gap}, etc).

Since in W$^*$-superrigidity statements all assumptions must be on the
side of the source Cartan subalgebra $A=\rL^\infty(X)$, it is thus crucial
to show that either the deformation or the rigidity
properties of $\Gamma \actson X$ {\it automatically transfer} to
$\Lambda \actson Y$. It is precisely the lack of satisfactory
``transfer'' results that so far prevented from obtaining
W$^*$-superrigidity results. We solve this problem here
by proving in Section 2 some very general ``transfer of rigidity'',
from the source to the target side. While the proofs of these results
are quite subtle, let us give here a heuristic explanation.

Assume that $A \rtimes \Gamma = M = B \rtimes \Lambda$ and denote by $(u_g)_{g \in \Gamma}$, resp.\ $(v_s)_{s \in \Lambda}$, the canonical unitaries in $A \rtimes \Gamma$, resp.\ $B \rtimes \Lambda$. Every element $x \in M$, has a Fourier expansion $x = \sum_{g \in \Gamma} x_g u_g$ with $x_g \in A$ and we call the $x_g$ the Fourier coefficients of $x$ w.r.t.\ $\{u_g\}$. We similarly define Fourier coefficients w.r.t.\ $\{v_s\}$. We assume that $\Gamma_1 < \Gamma$ is a non-amenable subgroup with the relative property (T) and try to transfer this rigidity property to some rigidity for $\Lambda$. More precisely, we show that for any deformation
$\phi_n$ of $M$ (i.e.\ a sequence of c.p.\ maps on $M$ tending
pointwise in the Hilbert norm to $\id_M$), there exist a large $n$
and an infinite subset $\{s_k\}_k\subset \Lambda$, such that
$\phi_n(v_{s_k}) \approx v_{s_k}$, $\forall k$, and such that every Fourier
coefficient of $v_{s_k}$ w.r.t.\ $\{u_g\}$ tends to zero in $\| \,\cdot\, \|_2$ as $k \recht \infty$.
We construct as follows the set $\{s_k\}_k$. The functions
$\psi_n(s) = \tau(\phi_n(v_s)v_s^*)$ are positive definite and hence,
define the c.p.\ maps $\Psi_n$ by $\Psi_n(\sum_s b_s v_s) = \sum_s \psi_n(s)
b_s v_s$. Since $\Psi_n \rightarrow \id_M$, the relative property (T) of
$\Gamma_1 < \Gamma$ ensures that $\Psi_n(u_g) \approx u_g$ uniformly
in $g \in \Gamma_1$. This forces $\psi_n(s) \approx 1$ for many of the $s \in \Lambda$
in the support of the Fourier expansion of $u_g, g \in \Gamma_1$ w.r.t.\ $\{v_s\}$. Among the $s \in \Lambda$ with
$\psi_n(s) \approx 1$, we can find a sequence $s_k$ such that the Fourier
coefficients of $v_{s_k}$ tend to zero as $k \recht \infty$, because otherwise, it will follow that the
$u_g, g \in \Gamma_1$ can roughly be intertwined into $A$, contradicting the
non-amenability of $\Gamma_1$.

As it turns out, if we assume that $\Gamma_1$ is freely complemented in
$\Gamma$, i.e.\ $\Gamma=\Gamma_1 * \Gamma_2$, then the ``tiny''
initial information about the group $\Lambda$ provided by the transfer of rigidity, is enough to prove that $B \prec_M A$. To see this, we first notice that if we apply the above transfer result to the word length deformation $\m_\rho(\sum_g
a_g u_g) = \sum_g \rho^{|g|} a_g u_g$, as $\rho \rightarrow
1$, then, for $\rho$ close enough to $1$, we have $\m_\rho(v_{s_k}) \approx v_{s_k}$ uniformly in $k$.
This implies that in the Fourier decomposition with respect to
$\{u_g\}$, all $v_{s_k}$ are almost supported by words $g\in
\Gamma=\Gamma_1 * \Gamma_2$ of length uniformly bounded by some $K$. On the other hand, since the Fourier coefficients of $v_{s_k}$ w.r.t.\ $\{u_g\}$ tend to $0$ in $\| \, \cdot \, \|_2$, the support of the Fourier expansion of the $v_{s_k}$ lies, as $k \recht \infty$, essentially outside any given finite subset of $\Gamma$. If, by contradiction $B \not\prec_M A$, results from \cite{P1,IPP} provide a unitary $w \in B$ such that the Fourier expansion of $w$ is essentially supported by a set of words $g \in \Gamma$ of length much larger than $K$ ($|g| \geq 2 K$
will do). As we will explain now, this implies that $v_{s_k} w v_{s_k}^*w^*$ and
$w^*v_{s_k}wv_{s_k}^*$ are almost orthogonal, contradicting the abelianess of $B$.

Indeed, first assume for simplicity that all $v_{s_k}$ lie in the span of $Au_g$, $|g|\leq K$, with the support of the Fourier expansion of $v_{s_k}$ w.r.t.\ $\{u_g\}$ tending to infinity in $\Gamma$. Similarly, assume that $w$ exactly lies in the span of $Au_g$, $|g| \geq 2K$. Then, one concludes that all $g \in \Gamma$ in the support of $w^*v_{s_k}wv_{s_k}^*$ eventually have their first $K$ letters in a fixed finite set independent of $k$, while all $g \in \Gamma$ in the support of $v_{s_k} w v_{s_k}^*w^*$, have their first $K$ letters eventually (as $k \recht \infty$) outside any fixed finite set. In reality, we can only approximate in $\|\,\cdot\,\|_2$ and uniformly in $k$, the unitaries $v_{s_k}$ by elements $v_{s_k}'$ with such good properties. We similarly approximate $w$ by $w'$. But since our reasoning involves products of 4 elements, the Hilbert norm estimates cannot be handled unless one
can control the uniform norms of $w-w'$, $v_{s_k}-v_{s_k}'$. We handle this problem through repeated ``trimming'' of elements, via
Herz-Schur multiplier techniques.

\section{Preliminaries}\label{sec.prelim}

If $\Gamma \actson (A,\tau)$ is a trace preserving action of a countable group $\Gamma$, we denote by $A \rtimes \Gamma$ the \emph{crossed product} von Neumann algebra, which is the unique tracial von Neumann algebra generated by $A$ and the group of unitaries $(u_g)_{g \in \Gamma}$ satisfying
$$u_g a u_g^* = \si_g(a) \quad\text{for all}\;\; g \in \Gamma , a \in A \quad\text{and}\quad \tau(a u_g) = \begin{cases} \tau(a) &\;\;\text{if}\; g = e \; , \\ 0 &\;\;\text{if}\; g \neq e \; .\end{cases}$$
When $\Gamma \actson (X,\mu)$ is a free ergodic p.m.p.\ action, the crossed product $\rL^\infty(X) \rtimes \Gamma$ is called the \emph{group measure space} II$_1$ factor associated with $\Gamma \actson (X,\mu)$. Then, $\rL^\infty(X)$ is a \emph{Cartan subalgebra} of $\rL^\infty(X) \rtimes \Gamma$, called a group measure space Cartan subalgebra.

We denote by $\rL(\Gamma)$ the group von Neumann algebra of a countable group $\Gamma$.

Recall that two free ergodic p.m.p.\ actions $\Gamma \actson (X,\mu)$ and $\Lambda \actson (Y,\eta)$ are called
\begin{itemize}
\item \emph{conjugate}, if there exists an isomorphism $\Delta : X \recht Y$ of probability spaces and an isomorphism $\delta : \Gamma \recht \Lambda$ of groups such that $\Delta(g \cdot x) = \delta(g) \cdot \Delta(x)$ almost everywhere,
\item \emph{orbit equivalent}, if there exists an isomorphism $\Delta : X \recht Y$ of probability spaces such that $\Delta(\Gamma \cdot x) = \Lambda \cdot \Delta(x)$ for almost all $x \in X$,
\item \emph{W$^*$-equivalent} (or von Neumann equivalent), if $\rL^\infty(X) \rtimes \Gamma \cong \rL^\infty(Y) \rtimes \Lambda$.
\end{itemize}

If two actions are conjugate, they are obviously orbit equivalent. On the other hand, two actions are orbit equivalent if and only if there exists an isomorphism $\rL^\infty(X) \rtimes \Gamma \cong \rL^\infty(Y) \rtimes \Lambda$ sending $\rL^\infty(X)$ onto $\rL^\infty(Y)$, see \cite{Si,FM}.

\subsection{Bimodules and weak containment}

Let $M,N$ be tracial von Neumann algebras. An \emph{$M$-$N$-bimodule $\bim{M}{\cH}{N}$} is a Hilbert space $\cH$ equipped with a normal representation $\pi$ of $M$ and a normal anti-representation $\pi'$ of $N$ such that $\pi(M)$ and $\pi'(N)$ commute.

Given the bimodules $\bim{M}{\cH}{N}$ and $\bim{N}{\cK}{P}$, one can define the \emph{Connes tensor product} $\cH \ot_N \cK$ which is an $M$-$P$-bimodule, see \cite[V.Appendix B]{connes}.

An $M$-$N$-bimodule can be seen as well as a representation of the C$^*$-algebra $M \binor \Nop$. If $\bim{M}{\cH^1}{N}$ and $\bim{M}{\cH^2}{N}$ are $M$-$N$-bimodules, we say that $\cH^1$ is \emph{weakly contained} in $\cH^2$ if the corresponding representations $\pi^1, \pi^2$ of $M \binor \Nop$ satisfy $\Ker \pi^1 \supset \Ker \pi^2$. Weak containment behaves well with respect to the Connes tensor product: if $\bim{M}{\cH^1}{N}$ is weakly contained in $\bim{M}{\cH^2}{N}$, then $\cK \ot_M \cH^1$ is weakly contained in $\cK \ot_M \cH^2$ for every $P$-$M$-bimodule $\cK$ (see e.g.\ \cite[Lemma 1.7]{claire}). A similar statement holds for tensor products on the right.

We call $\bim{M}{\rL^2(M)}{M}$ the \emph{trivial $M$-$M$-bimodule} and define the \emph{coarse $M$-$M$-bimodule} as the Hilbert space $\rL^2(M) \ot \rL^2(M)$ equipped with the bimodule structure $a \cdot \xi \cdot b = (a \ot 1) \xi (1 \ot b)$. A finite von Neumann algebra $M$ is \emph{injective} if the trivial $M$-$M$-bimodule is weakly contained in the coarse $M$-$M$-bimodule.

The first type of bimodule that we encounter in this article, is the following. Let $\Gamma \actson (Q,\tau)$ be a trace preserving action and put $M = Q \rtimes \Gamma$. Whenever $\pi : \Gamma \recht \cU(\cK)$ is a unitary representation, define the Hilbert space $\cH^\pi = \rL^2(M) \ot \cK$, with bimodule action given by
$$(a u_g) \cdot \xi \cdot (b u_h) = (au_g \ot \pi(g))\xi b u_h \quad\text{for all}\;\; a,b \in Q, g,h \in \Gamma \; .$$
If the unitary representation $\pi$ is weakly contained in the unitary representation $\rho$, then $\cH^\pi$ is weakly contained in $\cH^\rho$. If $\pi=\lambda$ is the regular representation of $\Gamma$ on $\ell^2(\Gamma)$, then $\cH^\lambda \cong \rL^2(M) \ot_Q \rL^2(M)$. When $Q$ is injective, $\bim{Q}{\rL^2(Q)}{Q}$ is weakly contained in $\bim{Q}{(\rL^2(Q) \ot \rL^2(Q))}{Q}$ and hence,
$$\rL^2(M) \ot_Q \rL^2(M) \cong \rL^2(M) \ot_Q \rL^2(Q) \ot_Q \rL^2(M)$$
is weakly contained in $\rL^2(M) \ot_Q (\rL^2(Q) \ot \rL^2(Q)) \ot_Q \rL^2(M) = \rL^2(M) \ot \rL^2(M)$. So, for $Q$ injective, $\cH^\lambda$ is weakly contained in the coarse $M$-$M$-bimodule. Finally, if $Q$ is injective and if the unitary representation $\pi$ is weakly contained in the regular representation, then $\cH^\pi$ is weakly contained in the coarse $M$-$M$-bimodule.

A next type of bimodule arises from \emph{Jones' basic construction} \cite{Jo83}. Let $M$ be a tracial von Neumann algebra with von Neumann subalgebra $P$. Denote by $\langle M,e_P \rangle$ the von Neumann algebra acting on $\rL^2(M)$ generated by $M$ and by the orthogonal projection $e_P$ of $\rL^2(M)$ onto $\rL^2(P)$. Equivalently, $\langle M,e_P \rangle$ is the commutant of the right $P$-action on $\rL^2(M)$. Given a tracial state $\tau$ on $M$, the von Neumann algebra $\langle M,e_P \rangle$ carries a natural normal semi-finite faithful trace $\Tr$ characterized by
$$\Tr (a e_P b) = \tau(ab) \quad\text{for all}\;\; a,b \in M \; .$$
In particular, we can write the Hilbert space $\rL^2(\langle M,e_P \rangle)$ and consider it as an $M$-$M$-bimodule. Then,
$$\bim{M}{\rL^2(\langle M,e_P \rangle)}{M} \cong \bim{M}{(\rL^2(M) \ot_P \rL^2(M))}{M} \; .$$
Again, if $P$ is injective, it follows that $\bim{M}{\rL^2(\langle M,e_P \rangle)}{M}$ is weakly contained in the coarse $M$-$M$-bimodule.

\subsection{Relative property (T) for an inclusion of finite von Neumann algebras}

We recall from  \cite[Proposition 4.1]{PBetti} the following definition of relative
property (T) for an inclusion of tracial von Neumann algebras.

\begin{definition} \label{def.relative-T}
Let $(M,\tau)$ be a tracial von Neumann algebra and let $P \subset M$ be a von Neumann subalgebra.
The inclusion $P \subset M$ is said to have the relative property (T) if the following
property holds: for every $\eps > 0$, there exists a finite subset
$\cJ \subset M$ and a $\delta > 0$ such that whenever $\bim{M}{\cH}{M}$ is
an $M$-$M$-bimodule admitting a unit vector $\xi$ with the properties
\begin{itemize}
\item $\| a \cdot \xi - \xi \cdot a \| < \delta$ for all $a \in \cJ$,
\item $| \langle \xi, a \cdot \xi \rangle - \tau(a)| < \delta$ and
$|\langle \xi , \xi \cdot a \rangle - \tau(a) | < \delta$ for all
$a$ in the unit ball of $M$,
\end{itemize}
there exists a vector $\xi_0 \in \cH$ satisfying $\|\xi - \xi_0\| < \eps$
and $a \cdot \xi_0 = \xi_0 \cdot a$ for all $a \in P$.
\end{definition}

Note that if $\Gamma_0$ is a subgroup of the countable group
$\Gamma$, then the inclusion $\rL(\Gamma_0) \subset \rL(\Gamma)$ has
relative property (T) if and only if $\Gamma_0 < \Gamma$ has the
relative property (T) of Kazhdan-Margulis \cite[Proposition
5.1]{PBetti}.

A normal completely positive map $\vphi : M \recht M$ is said to be subunital if $\vphi(1) \leq 1$ and subtracial if $\tau \circ \vphi \leq \vphi$. Let $P \subset M$ be an inclusion of tracial von Neumann algebras. Relative property (T) then has the following equivalent characterization: whenever $\vphi_n : M \recht M$ is a sequence of subunital and subtracial normal completely positive maps satisfying $\|x - \vphi_n(x) \|_2 \recht 0$ for all $x \in M$, we have that $\|x - \vphi_n(x)\|_2 \recht 0$ uniformly on the unit ball of $P$.

\subsection{Intertwining by bimodules}

To fix notations, we briefly recall the
\emph{intertwining-by-bimodules} technique from \cite[Section 2]{P1}
(see also \cite[Appendix C]{VBour}).

Let $(M,\tau)$ be a tracial von Neumann algebra and assume that
$A,B \subset \M_n(\C) \ot M$ are possibly non-unital von Neumann subalgebras.
Denote their respective units by $1_A$ and $1_B$. Then, the following two conditions are equivalent.
\begin{itemize}
\item $1_A (\M_n(\C) \ot \rL^2(M)) 1_B$ admits an $A$-$B$-subbimodule
that is finitely generated as a right $B$-module.
\item There is no sequence of unitaries $u_n \in \cU(A)$ satisfying
$\|E_B(x u_n y^*)\|_2 \recht 0$ for all $x,y \in 1_B(\M_n(\C) \ot M)1_A$.
\end{itemize}
If one of these equivalent conditions hold, we write $A \embed{M} B$. Otherwise, we write $A \notembed{M} B$.

When $M$ is a II$_1$ factor and $A,B \subset M$ are \emph{Cartan subalgebras,} then $A \embed{M} B$ if and only if there exists a unitary $u \in \cU(M)$ such that $A = u B u^*$, see \cite[Theorem A.1]{PBetti} (see also \cite[Theorem C.3]{VBour}).

\subsection{Cocycle superrigidity}\label{subsec.cocycle}

Let $\Gamma \actson (X,\mu)$ be a p.m.p.\ action. If $\cL$ is a Polish group, an $\cL$-valued $1$-cocycle is a measurable map $\om : \Gamma \times X \recht \cL$ satisfying $\om(g h , x) = \om(g,h \cdot x) \om(h,x)$ for all $g,h \in \Gamma$ and almost all $x \in X$. We say that $\Gamma \actson (X,\mu)$ is \emph{$\cL$-cocycle superrigid} if every $1$-cocycle with values in $\cL$ is cohomologous to a group morphism $\Gamma \recht \cL$. More precisely, this means that there exists a measurable map $\vphi : X \recht \cL$ and a group morphism $\delta : \Gamma \recht \cL$ satisfying $\om(g,x) = \vphi(g \cdot x) \delta(g) \vphi(x)^{-1}$ for all $g \in \Gamma$ and almost all $x \in X$.

We say that $\Gamma \actson (X,\mu)$ is \emph{$\Ufin$-cocycle superrigid} if it is $\cL$-cocycle superrigid for all $\cL$ in the class $\Ufin$ of Polish groups that can be realized as the closed subgroup of the unitary group of a II$_1$ factor with separable predual. Note that $\Ufin$ contains all countable groups and all compact second countable groups. Using the distance given by the $\|\,\cdot\,\|_2$-norm every group in $\Ufin$ admits a separable complete bi-invariant metric implementing the topology.

\section{Two transfer lemmas}

We want to establish W$^*$-superrigidity for certain known group
actions $\Gamma \actson (X,\mu)$. Starting from a ``mysterious''
alternative group measure space decomposition $\rL^\infty(Y) \rtimes \Lambda$ of the given factor $\rL^\infty(X) \rtimes
\Gamma$, we need
to transfer certain properties of $\Gamma \actson X$ to properties
of the unknown group $\Lambda$ or of its unknown action $\Lambda
\actson Y$. For our purposes, this transfer of properties is
achieved in Lemmas \ref{lemma.crucial} and \ref{lemma.spectralgap}.

\begin{lemma} \label{lemma.crucial}
Let $M$ be a II$_1$ factor with the following deformation property.
We are given a sequence $\vphi_n$ of subunital, subtracial, normal,
completely positive maps from $M$ to $M$ such that $\|x - \vphi_n(x)\|_2 \recht 0$
for all $x \in M$.

Let $P \subset M$ be a von Neumann subalgebra and assume that $P$ is injective.

Assume that the countable group $\Lambda$ acts trace preservingly on
the injective von Neumann algebra $Q$ and suppose that we have
identified $Q \rtimes \Lambda$ with $p M p$ for some projection $p
\in M$. We denote by $(v_s)_{s \in \Lambda}$ the canonical unitaries
in $Q \rtimes \Lambda$.

Finally, assume that $M_0 \subset M$ is a von Neumann subalgebra with
the relative property (T) and such that $M_0$ has no injective direct summand.

Then, for every $\eps > 0$, there exists $n$ and a sequence $(s_k)_k$ in $\Lambda$ such that
\begin{enumerate}
\item $\|\vphi_n(v_{s_k}) - v_{s_k}\|_2 \leq \eps$ for all $k$,
\item $\|E_P(x v_{s_k} y)\|_2 \recht 0$ for all $x,y \in M$.
\end{enumerate}
\end{lemma}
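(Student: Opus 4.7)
The plan is to transport the given deformation $\vphi_n$ of $M$ to a Schur-multiplier-type deformation on $pMp = Q \rtimes \Lambda$, then use the relative property (T) of $M_0 \subset M$ to concentrate the $(v_s)$-Fourier mass of $M_0$-unitaries on a good subset of $\Lambda$, and finally trade this concentration for the required sequence $(s_k)$ via the non-amenability of $M_0$ together with the injectivity of $P$.

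First I would set $\psi_n(s) := \tau(\vphi_n(v_s) v_s^*)$ and, relying on the injectivity of $Q$, build normal, subunital, subtracial c.p.\ maps $\Psi_n : pMp \recht pMp$ of multiplier form
\[
\Psi_n\Bigl(\sum_s b_s v_s\Bigr) = \tau(p)^{-1} \sum_s \psi_n(s)\, b_s\, v_s,
\]
witnessing at the same time the positive definiteness of $\tau(p)^{-1}\psi_n$ on $\Lambda$. The role of injectivity of $Q$ is to supply a $Q$-bimodular conditional expectation from an ambient amplification onto $Q \rtimes \Lambda$, through which one averages $\vphi_n|_{pMp}$ over the inner $Q$-action. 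Since $\vphi_n \recht \id_M$ pointwise in $\|\cdot\|_2$, the normalization forces $\Psi_n \recht \id_{pMp}$. A routine corner extension then promotes $\Psi_n$ to normal, subunital, subtracial c.p.\ maps $\hat\Psi_n : M \recht M$ converging to $\id_M$ pointwise in $\|\cdot\|_2$, ready for use with rel.\ prop.\ (T).

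For a parameter $\delta > 0$ to be fixed later, the relative property (T) of $M_0 \subset M$ applied to $\hat\Psi_n$ furnishes an $n$ with $\|\hat\Psi_n(x) - x\|_2 \leq \delta$ uniformly for $x$ in the unit ball of $M_0$. Expanding any unitary $u \in M_0 \cap pMp$ as $u = \sum_s b_s v_s$, the multiplier form of $\hat\Psi_n|_{pMp}$ translates this uniform bound into a Fourier-concentration estimate: all but a tiny fraction of the Parseval mass $\sum_s \|b_s\|_2^2$ is carried by the ``good set''
\[
G := \{\, s \in \Lambda : \operatorname{Re} \psi_n(s) \geq \tau(p) - \sqrt{\delta}\,\}.
\]
(For general $u \in M_0$ one first cuts down by corners meeting $pMp$.)

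I would then extract the sequence by contradiction. Suppose no sequence $(s_k)_k \subset G$ satisfies $\|E_P(x v_{s_k} y)\|_2 \recht 0$ for all $x,y \in M$. Combined with the Fourier concentration on $G$, a bimodule argument inside $\rL^2(\langle M, e_P \rangle) \cong \rL^2(M) \ot_P \rL^2(M)$ would upgrade this failure into an honest intertwining $M_0 \embed{M} P$. But $P$ is injective, so $\rL^2(\langle M, e_P \rangle)$ is weakly contained in the coarse $M$-$M$-bimodule; restricting to $M_0$ and intersecting with the finite-index $P$-subbimodule that witnesses $M_0 \embed{M} P$ would then yield a nonzero injective direct summand of $M_0$, contrary to hypothesis. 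Hence such $(s_k)_k \subset G$ exists. For any such $s_k$, Kadison--Schwarz and subtraciality give $\|\vphi_n(v_{s_k})\|_2^2 \leq \tau(\vphi_n(p)) \leq \tau(p)$, whence
\[
\|\vphi_n(v_{s_k}) - v_{s_k}\|_2^2 \leq 2\tau(p) - 2\operatorname{Re}\psi_n(s_k) \leq 2\sqrt{\delta},
\]
and choosing $\delta \leq \eps^4/4$ at the outset yields conclusion (1); conclusion (2) is built into the extraction. The main obstacle is precisely this extraction: because $\{v_s : s \in G\}$ is only a set of unitaries rather than a subalgebra, upgrading ``no escaping sequence'' into $M_0 \embed{M} P$ must be carried out at the bimodule level, carefully matching the rel.\ (T) concentration against an explicit $P$-finite subbimodule --- this is exactly where the interplay between the rigidity of $M_0$ and the softness of $P$ and $Q$ is tightest.
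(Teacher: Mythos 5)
Your overall architecture matches the paper's: pass from $\vphi_n$ to a Schur-multiplier deformation of $Q\rtimes\Lambda$ built from $\psi_n(s)=\tau(\vphi_n(v_s)v_s^*)$, use relative property (T) of $M_0\subset M$ to force the $\Lambda$-Fourier mass of $M_0$-unitaries onto the ``good set'' $G$ (the paper's $\cV$), argue by contradiction, and derive non-injectivity of $M_0$. However, the crucial contradiction step is where your plan breaks down.

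\textbf{The gap.} You write that the failure of conclusion~(2) on $G$, combined with the Fourier concentration, ``would upgrade this failure into an honest intertwining $M_0 \embed{M} P$'' via a bimodule argument inside $\rL^2(\langle M,e_P\rangle)$. This upgrade does not go through, and it is precisely the obstacle the paper's proof is designed to circumvent. The failure of~(2) gives a uniform lower bound $\sum_{x,y\in\cF}\|E_P(x v_s y^*)\|_2^2\geq 2\delta$ for each \emph{group element} $s\in G$, but a unitary $u=\sum_s b_s v_s\in\cU(M_0)$ is a sum over many such $s$, and the quantities $E_P(x u y^*)=\sum_s E_P(x b_s v_s y^*)$ suffer arbitrary cancellation; there is no way to conclude $\sum_{x,y}\|E_P(x u y^*)\|_2^2$ is bounded below, which is what $M_0 \embed{M} P$ would require. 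The paper's fix is to \emph{not} work in $\rL^2(\langle M,e_P\rangle)$ directly: instead it forms the positive-definite function $\psi(s)=\langle\xi_0,\pi(s)\xi_0\rangle=\sum_{x,y}\|E_P(x v_s y)\|_2^2$ on $\Lambda$ (where $\pi(s)=v_s\,\cdot\,v_s^*$ on $\rL^2(\langle M,e_P\rangle)$) and then builds a \emph{new} Schur-multiplier c.p.\ map $\rho(b v_s)=\psi(s)bv_s$ and its associated bimodule $\cK^\rho$. For the multiplier $\rho$ there are no cross terms: $\tau(u^*\rho(u))=\sum_s\psi(s)\|b_s\|_2^2\geq\delta$ for every $u\in\cU(M_0)$. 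The weak containment $\cK^\rho\prec\text{coarse}$ (which uses \emph{both} injectivity of $P$, to get $\pi$ weakly contained in the regular representation, \emph{and} injectivity of $Q$, to pass from the regular representation to the coarse bimodule) then produces almost-central unit vectors for $M_0$ in the coarse $M_0$-$M_0$-bimodule, contradicting the absence of an injective direct summand. No intertwining $M_0\embed{M}P$ is ever proved, and as you yourself observe such an intertwining would immediately contradict the hypotheses, which is a hint that it cannot be derived from the given data.

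\textbf{Minor point.} You attribute the positive-definiteness of $\psi_n$ and the existence of the multiplier $\Psi_n$ to injectivity of $Q$, via averaging against a $Q$-bimodular expectation. That is unnecessary: if $(\cH_n,\xi_n)$ is the pointed $M$-$M$-bimodule of $\vphi_n$, then $\psi_n(s)=\langle\xi_n, v_s\xi_n v_s^*\rangle$ is a matrix coefficient of the unitary representation $s\mapsto v_s(\cdot)v_s^*$ of $\Lambda$ on $p\cH_n p$, hence positive definite, so the multiplier c.p.\ map exists for free. Injectivity of $Q$ is reserved for the weak-containment step above.
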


\begin{proof}
We first argue that we may actually assume that $p=1$. Put $N = p M p$
and define $\psi_n : N \recht N : \psi_n(a) = p \vphi_n(a) p$. Then,
$\psi_n$ is a sequence of subunital, subtracial, normal, completely
positive maps and $\|a - \psi_n(a) \|_2 \recht 0$ for all $a \in N$.
Let $\eps > 0$. Since $\|p - \vphi_n(p)\|_2 \recht 0$, we can take $n_0$
such that $\|\psi_n(a) - \vphi_n(a)\|_2 \leq \frac{\eps}{2} \|a\|$ for
all $a \in N$ and all $n \geq n_0$. We now replace $M$ by $N$ and $\vphi_n$
by $\psi_n$. Since $M_0$ is diffuse and $M$ is a factor, we may assume that
$p \in M_0$ and finally replace $M_0$ by $p M_0 p \subset N$.

So, for the rest of the proof, we assume that $p = 1$. We have $M = Q \rtimes \Lambda$.

Define the positive-definite functions $\vphitil_n : \Lambda \recht \C : \vphitil_n(s) := \tau(v_s^* \vphi_n(v_s))$. Note that $\vphitil_n \recht 1$ pointwise. We have to prove the following statement: for every $\eps > 0$, there exists $n$ and a sequence $(s_k)_k$ in $\Lambda$ with the properties
\begin{equation}\label{eq.prop}
|\vphitil_n(s_k) - 1 | \leq \eps \;\;\text{for all}\; k \quad\text{and}\quad \|E_P(x v_{s_k} y)\|_2 \recht 0 \;\;\text{for all}\; x,y \in M \; .
\end{equation}
Suppose that this statement is false. Fix $\eps > 0$ such that for every $n$, it is impossible to find a sequence $(s_k)_k$ in $\Lambda$ with properties \eqref{eq.prop}.

Define for every $n$, the normal completely positive map $\theta_n : M \recht M$ satisfying $\theta_n(b v_s) = \vphitil_n(s) b v_s$ for all $b \in Q$, $s \in \Lambda$. Note that $\|\theta_n(x) - x \|_2 \recht 0$ for all $x \in M$. Since $M_0 \subset M$ has the relative property (T), fix $n$ such that $\|\theta_n(w) - w \|_2^2 \leq \eps^2/2$ for all $w \in \cU(M_0)$.

By assumption, it is impossible to find a sequence $(s_k)_k$ in $\Lambda$ with properties \eqref{eq.prop}. Define
$$\cV = \{s \in \Lambda \mid |\vphitil_n(s) - 1 | \leq \eps \} \; .$$
It follows that we can take a finite subset $\cF \subset M$ and a $\delta > 0$ such that for all $s \in \cV$, we have
$$\sum_{x,y \in \cF} \|E_P(x v_s y^*)\|_2^2 \geq 2\delta \; .$$
Consider the Hilbert space $\cK = \rL^2(\langle M,e_P \rangle)$ and the unitary representation
$$\pi : \Lambda \recht \cU(\cK) : \pi(s) \xi = v_s \xi v_s^* \; .$$
Since $P$ is injective, the unitary representation $\pi$ is weakly contained in the regular representation of $\Lambda$. Define $\xi_0 \in \cK$ by the formula $\xi_0 = \sum_{x \in \cF} x^* e_P x$. Define the positive definite function $$\psi : \Lambda \recht [0,+\infty) : \psi(s) = \langle \xi_0 ,\pi(s) \xi_0 \rangle \; .$$
Note that $\psi(s) = \sum_{x,y \in \cF} \|E_P(x v_s y)\|_2^2$ and hence, $\psi(s) \geq 2\delta$ for all $s \in \cV$.

Define the normal completely positive map $\rho : M \recht M$
such that $\rho(b v_s) = \psi(s) b v_s$ for all $b \in Q$, $s \in \Lambda$.
Since $Q$ is injective and since $\pi$ is weakly contained in the regular
representation of $\Lambda$, it follows that the $M$-$M$-bimodule $\bim{M}{\cK^\rho}{M}$
defined by $\rho$ is weakly contained in the coarse $M$-$M$-bimodule
$\bim{M}{(\rL^2(M) \ot \rL^2(M))}{M}$.

We claim that $\tau(w^* \rho(w)) \in [\delta,1]$ for all $w \in \cU(M_0)$. Since $\|\theta_n(w) - w \|_2^2 \leq \eps^2/2$ for all $w \in \cU(M_0)$, it suffices to prove that $\tau(w^* \rho(w)) \in [\delta,1]$ for every unitary $w \in \cU(M)$ satisfying $\|\theta_n(w) - w \|_2^2 \leq \eps^2/2$. Take such a unitary $w$ and write $w = \sum_{s \in \Lambda} b_s v_s$. It follows that
\begin{align*}
\frac{\eps^2}{2} & \geq \|\theta_n(w) - w \|_2^2 = \sum_{s \in \Lambda} |\vphitil_n(s) - 1|^2 \, \|b_s\|_2^2 \\
& \geq \sum_{s \in \Lambda - \cV}  |\vphitil_n(s) - 1|^2 \, \|b_s\|_2^2 \\
& \geq \eps^2 \sum_{s \in \Lambda - \cV} \|b_s\|_2^2 \; .
\end{align*}
We conclude that $$\sum_{s \in \Lambda - \cV} \|b_s\|_2^2 \leq \frac{1}{2} \; .$$
Since $w$ is unitary, it follows that
$$\sum_{s \in \cV} \|b_s\|_2^2 \geq \frac{1}{2} \; .$$
It now follows that
\begin{align*}
\tau(w^* \rho(w)) & = \sum_{s \in \Lambda} \psi(s) \|b_s\|_2^2 \geq \sum_{s \in \cV} \psi(s) \|b_s\|_2^2 \\
& \geq 2 \delta \sum_{s \in \cV} \|b_s\|_2^2 \geq \delta \; .
\end{align*}
Hence, our claim is proven.

Since $\tau(w^* \rho(w)) \in [\delta,1]$ for all $w \in \cU(M_0)$, the $M$-$M$-bimodule $\bim{M}{\cK^\rho}{M}$ contains a non-zero $M_0$-central vector. On the other hand, the $M$-$M$-bimodule $\bim{M}{\cK^\rho}{M}$ is weakly contained in the coarse $M$-$M$-bimodule $\bim{M}{(\rL^2(M) \ot \rL^2(M))}{M}$. It follows that the coarse $M_0$-$M_0$-bimodule admits a sequence of almost $M_0$-central unit vectors. This is a contradiction with the assumption that $M_0$ has no injective direct summand.
\end{proof}

\begin{lemma} \label{lemma.spectralgap}
Let $M$ be a II$_1$ factor with the following deformation property. We have an inclusion $M \subset \Mtil$ of $M$ into the finite von Neumann algebra $\Mtil$ such that the $M$-$M$-bimodule $\bim{M}{\bigl(\rL^2(\Mtil) \ominus \rL^2(M)\bigr)}{M}$ is weakly contained in the coarse $M$-$M$-bimodule $\bim{M}{(\rL^2(M) \ot \rL^2(M))}{M}$. We are given a sequence $\al_n \in \Aut(\Mtil)$ such that $\|a - \al_n(a)\|_2 \recht 0$ for all $a \in \Mtil$.

Let $P \subset M$ be a von Neumann subalgebra and assume that $P$ is injective.

Assume that the countable group $\Lambda$ acts trace preservingly on the injective von Neumann algebra $Q$ and suppose that we have identified $Q \rtimes \Lambda = p M p$ for some projection $p \in M$. We denote by $(v_s)_{s \in \Lambda}$ the canonical unitaries in $Q \rtimes \Lambda$.

Finally, assume that $M_1,M_2 \subset M$ are von Neumann subalgebras such that $M_1$ and $M_2$ commute and $M_1,M_2$ have no injective direct summand.

Then, for every $\eps > 0$, there exists $n$ and a sequence $s_k \in \Lambda$ such that
\begin{enumerate}
\item $\|\al_n(v_{s_k}) - E_M(\al_n(v_{s_k}))\|_2 \leq \eps$ for all $k$,
\item $\|E_P(x v_{s_k} y) \|_2 \recht 0$ for all $x,y \in M$.
\end{enumerate}
\end{lemma}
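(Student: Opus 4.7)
The plan is to adapt the proof of Lemma~\ref{lemma.crucial}, replacing the use of relative property~(T) by Popa's spectral-gap argument. As in that lemma, since $M$ is a II$_1$ factor I first reduce to the case $p=1$ (after conjugating $p$ into $M_1$ using that $M_1$ is diffuse, and replacing $M_2$ by $pM_2p=pM_2$, which is possible since $p\in M_1\subset M_2'$). So I may assume $M=Q\rtimes\Lambda$. I then introduce the positive-definite function $\vphitil_n(s):=\tau(v_s^*\al_n(v_s))$ on $\Lambda$, whose positive-definiteness follows from the identity $\vphitil_n(t^{-1}s)=\langle W_t,W_s\rangle_{\rL^2(\Mtil)}$ with $W_s:=v_s\al_n(v_s^*)$. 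Since $\al_n\recht\id$ in $\|\cdot\|_2$, $\vphitil_n\recht 1$ pointwise. Let $\Theta_n:M\recht M$ be the associated Fourier multiplier $\Theta_n(bv_s):=\vphitil_n(s)bv_s$, which is UCP, trace-preserving, and converges to $\id$ pointwise in $\|\cdot\|_2$.

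The core new ingredient is a spectral-gap statement for $\Theta_n$: for every $\eta>0$ there exists $n$ with $\|\Theta_n(w)-w\|_2\leq\eta$ uniformly for $w\in\cU(M_1)\cup\cU(M_2)$. To prove this, I identify the Stinespring $M$-$M$-bimodule of $\Theta_n$ with $\cH^{\pi_n}$ in the notation of Section~\ref{sec.prelim}, where $\pi_n(s)\zeta=v_s\zeta\al_n(v_s^*)$ is the representation of $\Lambda$ on $\rL^2(\Mtil)=\rL^2(M)\oplus\cH$. Using that $\cH$ is weakly contained in the coarse $M$-$M$-bimodule (by hypothesis) and that $Q$ is injective, the Section~\ref{sec.prelim} machinery shows that $\cH^{\pi_n}$ is weakly contained in the direct sum of the trivial and the coarse $M$-$M$-bimodule. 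Popa's spectral-gap theorem, applied to this bimodule and to the commuting subalgebras $M_1,M_2$ without injective direct summand, then yields the claimed uniform convergence.

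From here the argument parallels Lemma~\ref{lemma.crucial}. Assuming the conclusion fails, fix $\eps>0$ and set $\cV_n:=\{s\in\Lambda:|1-\vphitil_n(s)|\leq\eps^2/4\}$. The Cauchy--Schwarz bound $|\vphitil_n(s)|^2\leq \|E_M(\al_n(v_s))\|_2^2=1-\|\al_n(v_s)-E_M(\al_n(v_s))\|_2^2$ gives $\cV_n\subset\{s:\|\al_n(v_s)-E_M(\al_n(v_s))\|_2\leq\eps\}$, so the hypothetical failure yields, for each $n$, a finite $\cF_n\subset M$ and $\delta_n>0$ with $\sum_{x,y\in\cF_n}\|E_P(xv_sy)\|_2^2\geq 2\delta_n$ for all $s\in\cV_n$. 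Choosing $n$ with $\|\Theta_n(w)-w\|_2^2\leq\eps^4/32$ on $\cU(M_1)$ and writing $w=\sum_s b_sv_s\in\cU(M_1)$, the Fourier identity $\|\Theta_n(w)-w\|_2^2=\sum_s|1-\vphitil_n(s)|^2\|b_s\|_2^2$ forces $\sum_{s\in\cV_n}\|b_s\|_2^2\geq 1/2$. Setting $\psi(s):=\sum_{x,y\in\cF_n}\|E_P(xv_sy)\|_2^2$ and its Fourier multiplier $\rho$, whose Stinespring bimodule is weakly contained in the coarse $M$-$M$-bimodule by injectivity of $P$ and $Q$, we obtain $\tau(w^*\rho(w))\geq\delta_n$ uniformly for $w\in\cU(M_1)$. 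The standard averaging argument then produces a non-zero $M_1$-central vector in that bimodule, contradicting the hypothesis that $M_1$ has no injective direct summand.

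The hard part will be the bimodule analysis in the second paragraph: verifying that $\cH^{\pi_n}$ is weakly contained in the trivial-plus-coarse $M$-$M$-bimodule. This requires analyzing $\pi_n$ on both summands of $\rL^2(\Mtil)=\rL^2(M)\oplus\cH$, where it is asymptotically the conjugation representation of $\Lambda$ on $\rL^2(M)$ (handled via injectivity of $Q$) and inherits weak containment in coarse on $\cH$ (from the hypothesis), and then combining these ingredients via the Section~\ref{sec.prelim} machinery.
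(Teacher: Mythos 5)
Your overall skeleton (reduce to $p=1$, consider the Fourier multiplier $\Theta_n$ associated with $\vphitil_n(s) = \tau(v_s^*\al_n(v_s))$, extract $\cV_n$, derive a contradiction from a central vector in a weakly coarse bimodule) is the right template, but the ``core new ingredient'' you isolate is not correct, and this is precisely the step that distinguishes the spectral-gap setting from the property (T) setting of Lemma~\ref{lemma.crucial}.

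You claim: for every $\eta>0$ there is $n$ with $\|\Theta_n(w)-w\|_2\leq\eta$ uniformly on $\cU(M_1)\cup\cU(M_2)$. This is exactly the conclusion that relative property (T) supplies in Lemma~\ref{lemma.crucial}; asserting it here with spectral gap alone is asserting a relative rigidity that the hypotheses do not give. Popa's spectral-gap principle controls only the part of the deformation that is \emph{orthogonal to $M$}, i.e.\ $\al_n(w)-E_M(\al_n(w))$, not the full difference $\al_n(w)-w$. Concretely, $|1-\vphitil_n(s)|\leq\|v_s-\al_n(v_s)\|_2$ and $\|v_s-\al_n(v_s)\|_2^2 = \|v_s-E_M(\al_n(v_s))\|_2^2 + \|\al_n(v_s)-E_M(\al_n(v_s))\|_2^2$. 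The spectral-gap mechanism controls the second summand; it says nothing about the first. So $\|\Theta_n(w)-w\|_2^2 = \sum_s |1-\vphitil_n(s)|^2\|b_s\|_2^2$ is not bounded by spectral gap. Indeed, if your claim were true, then in any $M$ possessing commuting non-injective subalgebras (say $M = \rL(\F_2\times\F_2)$), \emph{every} pointwise-convergent sequence of such Fourier multipliers $\Theta_n\to\id$ would converge uniformly on the unit ball --- but that would force a property-(T)-type rigidity that these algebras do not have.

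Your attempted justification (that the Stinespring bimodule $\cH^{\pi_n}$ is weakly contained in trivial~$\oplus$~coarse) also does not go through: the decomposition $\rL^2(\Mtil) = \rL^2(M)\oplus\bigl(\rL^2(\Mtil)\ominus\rL^2(M)\bigr)$ is \emph{not} $\pi_n$-invariant for $\al_n\neq\id$, since $\pi_n(s)\zeta = v_s\,\zeta\,\al_n(v_s)^*$ and $\al_n(v_s)^*$ has a component off $M$. So you cannot ``analyze $\pi_n$ on both summands'' --- they are not summands of the representation. Moreover, the summand $\rL^2(M)$ carries the $\Lambda$-conjugation action on $\rL^2(M)$, which contains the Koopman representation of $\Lambda$ on $\rL^2(Q)$; this is not weakly contained in trivial~$\oplus$~regular for a general trace-preserving action, so even in the simplest case $\al_n=\id$ the weak containment you need is false.

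The paper's proof circumvents this by never asking for uniform approximation of a Fourier multiplier. It introduces the doubling $*$-homomorphism $\beta : N\to N\ovt N$, $\beta(av_s)=av_s\ot v_s$, applies the deformation only in the second leg, and uses spectral gap to show that $(\id\ot\al_n)\beta(a)$ is close to $(\id\ot E_M\circ\al_n)\beta(a)$, uniformly over $a\in\cU(M_2)$ --- this is the genuine spectral-gap estimate. It then defines $\cV = \{s : \|\al_n(v_s)-E_M(\al_n(v_s))\|_2\leq\eps\}$ directly (not via $\vphitil_n(s)\approx 1$, which would require the stronger and false estimate) and shows the Fourier mass of $a\in\cU(M_2)$ concentrates on $\cV$. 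The final contradiction comes from an $M_2$-central vector in the $\beta$-twisted bimodule $\rL^2(N)\ot p\rL^2(\langle M,e_P\rangle)p$, not an $M_1$-central vector as in your plan. The doubling $\beta$, and the resulting two-step use of the two commuting subalgebras (spectral gap via $M_1$, then centrality via $M_2$), is the structural idea your proposal is missing.
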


\begin{proof}
We use throughout the trace $\tau$ normalized in such a way that $\tau(p) = 1$. We use the notation $\| \cdot  \|_2$ accordingly.

Assume that the statement is false. Take $\eps > 0$ such that for all $n \in \N$, it is impossible to find a sequence $s_k \in \Lambda$ satisfying properties 1 and 2 above.

Put $N := Q \rtimes \Lambda$, which we identified with $p M p$. Define the normal, unital $*$-homomorphism $\beta : N \recht N \ovt N$ given by $\beta(a v_s) = av_s \ot v_s$ for all $a \in Q$, $s \in \Lambda$. Put $\Ntil := p \Mtil p$. Define the $N$-$N$-bimodule $\bim{N}{\cH}{N}$ given by $\cH := \rL^2(N) \ot \rL^2(\Ntil)$ with $x \cdot \xi \cdot y := \beta(x) \xi \beta(y)$. Put $\cH^0 := \rL^2(N) \ot (\rL^2(\Ntil) \ominus \rL^2(N))$. Note that $\cH^0 \subset \cH$ is an $N$-$N$-subbimodule. By our assumptions and because $Q$ is injective, the $N$-$N$-bimodule $\bim{N}{\cH^0}{N}$ is weakly contained in the coarse $N$-$N$-bimodule.

Since $M_1$ is diffuse and $M$ is a factor, we may assume that $p \in M_1$. We replace $M_1$ by $pM_1 p$ and $M_2$ by $M_2 p$ so that they become commuting subalgebras of $N$.

Put $\eps_1 = \frac{1}{10} \eps$. Since $M_1$ has no injective direct summand and since $\bim{M_1}{\cH^0}{M_1}$ is weakly contained in the coarse $M_1$-$M_1$-bimodule, we can take a finite subset $\cF \subset M_1$ and a $\rho > 0$ such that whenever $\xi \in \cH^0$ and $\|x \cdot \xi - \xi \cdot x \|_2 \leq \rho$ for all $x \in \cF$, then $\|\xi\|_2 \leq \eps_1$.

Define the linear map $$\theta_n : M_2 \recht \cH : \theta_n(a) = (1 \ot p)(\id \ot \al_n)\beta(a)(1 \ot p) \; .$$
Take $n$ large enough such that
$$\|\al_n(p) - p \|_2 \quad\text{and}\quad \| (\id \ot \al_n^{-1})\beta(x) - \beta(x) \|_2 \;\; , x \in \cF \; \; ,$$
are all sufficiently small in order to ensure that
$$
\|x \cdot \theta_n(a) - \theta_n(a) \cdot x \|_2 \leq \rho \quad\text{for all}\;\; x \in \cF \; , \; a \in \cU(M_2) \; .
$$
It follows that
$$\|\theta_n(a) - (\id \ot E_M)\theta_n(a) \|_2 \leq \eps_1 \quad\text{for all}\;\; a \in \cU(M_2) \; .$$
When choosing $n$, we can make sure that $\|\al_n(p) - p\|_2 \leq \eps_1$, yielding
$$\|\theta_n(a) - (\id \ot \al_n)\beta(a) \|_2 \leq 2 \eps_1 \|a\| \quad\text{for all}\;\; a \in M_2 \; .$$
As a conclusion, we get
\begin{equation}\label{eq.est}
\|(\id \ot \al_n)\beta(a) - (\id \ot E_M \circ \al_n)\beta(a) \|_2 \leq 5 \eps_1 \quad\text{for all}\;\; a \in \cU(M_2) \; .
\end{equation}

Define $\cV = \{s \in \Lambda \mid \|\al_n(v_s) - E_M(\al_n(v_s))\|_2 \leq \eps \}$. By our assumption ex absurdo, it is impossible to find a sequence $s_k \in \Lambda$ satisfying conditions 1 and 2 in the formulation of the lemma. Hence, we find a finite subset $\cF_1 \subset M$ and a $\delta > 0$ such that for all $s \in \cV$, we have
$$\sum_{x,y \in \cF_1} \|E_P(x v_s y^*)\|_2^2 \geq \delta \; .$$
Define the $N$-$N$-bimodule $\bim{N}{\cK}{N}$ where
$$\cK = \rL^2(N) \ot p \rL^2( \langle M,e_P \rangle) p \quad\text{and}\quad x \cdot \xi \cdot y = \beta(x) \xi \beta(y) \; .$$ Since $P$ is injective, $\bim{N}{\bigl( p \rL^2( \langle M,e_P \rangle) p \bigr)}{N}$ is contained in the coarse $N$-$N$-bimodule. Since $Q$ is injective, also $\bim{N}{\cK}{N}$ is weakly contained in the coarse $N$-$N$-bimodule. We now prove that $\cK$ admits a non-zero $M_2$-central vector. Since $M_2$ has no injective direct summand, this yields the required contradiction.

Define the vector $\xi \in \cK$ by the formula
$$\xi := \sum_{x \in \cF_1} 1 \ot p x^* e_P x p \; .$$
We will prove that
\begin{equation}\label{eq.aim}
\langle a \cdot \xi \cdot a^*, \xi \rangle \geq \frac{3}{4} \delta  \quad\text{for all}\;\; a \in \cU(M_2) \; .
\end{equation}
Take $a \in \cU(M_2)$ and write $a = \sum_{s \in \Lambda} a_s v_s$ with $a_s \in Q$. Then,
$$(\id \ot \al_n)\beta(a) - (\id \ot E_M \circ \al_n)\beta(a) = \sum_{s \in \Lambda} a_s v_s \ot \bigl(\al_n(v_s) - E_M(\al_n(v_s))\bigr) \; .$$
Using \eqref{eq.est} and the definition of $\cV \subset \Lambda$, it follows that
\begin{align*}
(5 \eps_1)^2 & \geq \| (\id \ot \al_n)\beta(a) - (\id \ot E_M \circ \al_n)\beta(a) \|_2^2 \\
& = \sum_{s \in \Lambda} \|a_s\|_2^2 \; \| \al_n(v_s) - E_M(\al_n(v_s)) \|_2^2 \\
& \geq \eps^2 \sum_{s \in \Lambda \setminus \cV} \|a_s\|_2^2 \; .
\end{align*}
Using our definition of $\eps_1$, we conclude that
$$\sum_{s \in \Lambda \setminus \cV} \|a_s\|_2^2 \leq \frac{1}{4} \quad\text{and hence}\quad \sum_{s \in \cV} \|a_s\|_2^2 \geq \frac{3}{4} \; .$$
It follows that
\begin{align*}
\langle a \cdot \xi \cdot a^* , \xi \rangle & = \sum_{s \in \Lambda} \|a_s\|_2^2 \; \Bigl( \sum_{x,y \in \cF_1} \|E_P(x v_s y^*)\|_2^2 \Bigr) \\
& \geq \delta \sum_{s \in \cV} \|a_s\|_2^2 \geq \frac{3}{4} \delta \; .
\end{align*}
So, we have shown \eqref{eq.aim}. It follows that the unique vector of minimal norm in the closed convex hull of $\{a \cdot \xi_0 \cdot a^* \mid a \in \cU(M_2)\}$ is non-zero and $M_2$-central. This ends the proof of the lemma.
\end{proof}

\section{Some Herz-Schur multipliers on amalgamated free products} \label{sec.Schur}

Let $\Gamma$ be a countable group. A function $\vphi : \Gamma \recht \C$ is called a \emph{Herz-Schur multiplier} if $u_g \mapsto \vphi(g) u_g$ extends to an ultraweakly continuous, completely bounded linear map $\m_\vphi : \rL(\Gamma) \recht \rL(\Gamma)$. The linear space of Herz-Schur multipliers on $\Gamma$ is denoted by $\rB_2(\Gamma)$. Whenever $\vphi \in \rB_2(\Gamma)$, put $\cb{\vphi} := \cb{\m_\vphi}$.

Let $\Gamma = \Gamma_1 *_\Sigma \Gamma_2$ be an amalgamated free product. All elements $g \in \Gamma$ have a natural \emph{length} $|g|$, arising by writing $g$ as an alternating product of elements $\Gamma_1 - \Sigma$ and elements in $\Gamma_2 - \Sigma$. By convention, $|g|=0$ if and only if $g \in \Sigma$.

The following result is probably well known. It is an immediate consequence of \cite[Proposition 3.2]{Boz-Pic} and we include a proof for the convenience of the reader.

\begin{lemma} \label{lemma.multiplier}
1.\ For every $K \in \N$, there exists $K' > K$ and $\psi \in \rB_2(\Gamma)$ with the following properties.
\begin{itemize}
\item $\cb{\psi} \leq 2$.
\item $\psi(g) = 1$ if $|g| \leq K$ and $\psi(g) = 0$ if $|g| \geq K'$.
\item $0 \leq \psi(g) \leq 1$ for all $g \in \Gamma$.
\end{itemize}

2.\ For every $K \in \N$, there exists $K' > K$ and $\vphi \in \rB_2(\Gamma)$ with the following properties.
\begin{itemize}
\item $\cb{\vphi} \leq 3$.
\item $\vphi(g) = 0$ if $|g| \leq K$ and $\vphi(g) = 1$ if $|g| \geq K'$.
\item $0 \leq \vphi(g) \leq 1$ for all $g \in \Gamma$.
\end{itemize}
\end{lemma}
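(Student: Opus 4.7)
The plan is to deduce both assertions directly from Proposition 3.2 of \cite{Boz-Pic}, which constructs radial Herz-Schur multipliers on the amalgamated free product $\Gamma = \Gamma_1 *_\Sigma \Gamma_2$ with explicit completely bounded norm control. The essential input from that proposition is that for every $\rho \in (0,1)$, the radial function $\phi_\rho(g) = \rho^{|g|}$ extends to a completely positive (hence CB-norm $\leq 1$) multiplier on $\rL(\Gamma)$, and more generally, any sufficiently tame radial function $f(|g|)$ yields a CB-multiplier with a norm estimate in terms of the ``total variation'' of the coefficients appearing in its expansion against the $\phi_\rho$'s.

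For part 1, having fixed $K$, I would realize $\psi$ as the piecewise linear trapezoidal radial function equal to $1$ on the ball $\{|g| \leq K\}$, decreasing linearly to $0$ on the annulus $\{K \leq |g| \leq K'\}$, and vanishing on $\{|g| \geq K'\}$. The key observation is that such a $\psi$ can be written as a short linear combination of exponentials $\rho_i^{|g|}$, or equivalently as the integral of $\phi_\rho$ against a signed measure of small total variation; the resulting coefficient norm can be made $\leq 2$ provided the width $K' - K$ of the transition region is chosen sufficiently large (depending on the $\rho_i$, hence on $K$). This is precisely where the freedom to enlarge $K'$ in the statement is used.

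For part 2, I would simply set $\vphi := 1 - \psi$ for $\psi$ as in part 1. Since the constant function $1$ is the identity multiplier of CB-norm $1$, one gets $\cb{\vphi} \leq 1 + \cb{\psi} \leq 3$, and the value prescriptions on balls and on their complements for $\vphi$ transfer directly from those for $\psi$, with the roles of $0$ and $1$ reversed. The $[0,1]$-valuedness is immediate from $\psi \in [0,1]$.

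The only real technical obstacle lies in part 1, namely the bookkeeping needed to realize the CB-norm bound $\leq 2$: a naive choice such as $\psi = \chi_{\{|g| \leq K\}}$ fails, since characteristic functions of balls have CB-multiplier norm that grows without bound with $K$. The smoothing across a wide annulus $\{K \leq |g| \leq K'\}$ is essential, and its feasibility is exactly what the explicit construction in Proposition 3.2 of \cite{Boz-Pic} guarantees; this proposition I would invoke essentially as a black box.
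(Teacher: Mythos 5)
Your overall strategy is right in outline (smooth across a wide annulus, invoke Bo\.zejko--Picardello, take $\vphi = 1 - \psi$ for part 2), but the mechanism you propose for part 1 does not work and would not close. A radial function that is \emph{exactly} equal to $1$ on a ball and \emph{exactly} equal to $0$ outside a larger ball cannot be written as a finite linear combination of exponentials $\rho_i^{|g|}$, nor as the integral of $\rho^{|g|}$ against a signed measure on $(0,1)$: any nonzero such combination has full support, since exponentials decay but never vanish. So your proposed ``trapezoidal function $=$ small combination of exponentials'' identity does not exist, and there is no ``total variation'' estimate in Proposition~3.2 of \cite{Boz-Pic} to invoke. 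What that proposition actually supplies is two specific facts: $\cb{\theta_\rho}=1$ for $\theta_\rho(g)=\rho^{|g|}$, and $\cb{\gamma_n}\le 4n+1$ where $\gamma_n$ is the indicator of the sphere $\{|g|=n\}$. The second one --- which your argument omits entirely --- is the ingredient that makes exact compact support possible.

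The paper's construction then is: take $\psi$ equal to $1$ on $\{|g|\le K\}$, equal to $\rho^{|g|}$ on $\{K<|g|<K'\}$ (exponential, not linear), and $0$ on $\{|g|\ge K'\}$, and observe
$$\psi \;=\; \theta_\rho \;+\; \sum_{n=0}^{K}(1-\rho^n)\gamma_n \;-\; \sum_{n=K'}^{\infty}\rho^n\gamma_n \; .$$
The first sum flattens $\theta_\rho$ to $1$ on the ball and the second truncates the tail; both corrections are controlled by $\cb{\gamma_n}\le 4n+1$. One first chooses $\rho$ close enough to $1$ so that $\sum_{n=0}^{K}(1-\rho^n)(4n+1)\le\tfrac12$, and only \emph{then} chooses $K'$ large enough so that $\sum_{n=K'}^{\infty}\rho^n(4n+1)\le\tfrac12$, giving $\cb{\psi}\le 1+\tfrac12+\tfrac12=2$. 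Note in particular that the order of quantifiers matters: $K'$ is chosen after $\rho$, and the resulting cutoff is exponential in the annulus rather than linear. Your part 2 ($\vphi = 1-\psi$, giving $\cb{\vphi}\le 1+\cb{\psi}\le 3$) is correct and coincides with the paper once part 1 is in place.
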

\begin{proof}
1.\ Whenever $0 < \rho < 1$, define $\theta_\rho(g) = \rho^{|g|}$. For all $n \in \N$, put $\gamma_n(g) = 1$ if $|g|=n$ and $\gamma_n(g) = 0$ if $|g| \neq n$. By \cite[Proposition 3.2]{Boz-Pic}, all $\theta_\rho$ and $\gamma_n$ belong to $\rB_2(\Gamma)$ and satisfy $\cb{\theta_\rho} = 1$ and $\cb{\gamma_n} \leq 4n+1$.

Choose $K \in \N$. Take $0 < \rho < 1$ close enough to $1$ such that
$$\sum_{n=0}^K (1-\rho^n)(4n+1) \leq \frac{1}{2} \; .$$
Next, take $K' > K$ large enough such that
$$\sum_{n=K'}^\infty \rho^n (4n+1) \leq \frac{1}{2} \; .$$
Define
$$\psi(g) = \begin{cases} 1 & \;\;\text{if}\;\; |g| \leq K \; , \\
\rho^k & \;\;\text{if}\;\; K < |g| < K' \; , \\
0 & \;\;\text{if}\;\; |g| \geq K' \; . \end{cases}$$
Since $$\psi = \theta_\rho + \sum_{n=0}^K (1 - \rho^n) \gamma_n - \sum_{n = K'}^\infty \rho^n \gamma_n \; ,$$ we conclude that $\psi \in \rB_2(\Gamma)$ and $\cb{\psi} \leq 2$.

2.\ Take $\psi$ as in 1 and put $\vphi(g) = 1 - \psi(g)$.
\end{proof}

\section{Factors with unique group measure space Cartan subalgebra}\label{sec.unique-gms-cartan}

\begin{definition}\label{def.G}
We define the family $\cG$ of groups $\Gamma$ of the form $\Gamma = \Gamma_1 *_\Sigma \Gamma_2$ with the following properties.
\begin{enumerate}
\item $\Gamma_1$ contains a non-amenable subgroup with the relative property (T) or $\Gamma_1$ contains two non-amenable commuting subgroups,
\item $\Sigma$ is amenable and $\Gamma_2 \neq \Sigma$,
\item There exist $g_1,\ldots,g_k \in \Gamma$ such that$\quad\dis \bigcap_{i=1}^k g_i \Sigma g_i^{-1}\quad\text{is finite}\; .$
\end{enumerate}
\end{definition}

Our main result says that all group measure space II$_1$ factors with
groups $\Gamma \in \cG$ have a unique group measure space Cartan subalgebra.
We also deal with amplifications. Therefore, denote by $\D_n(\C)$ the subalgebra
of diagonal matrices in $\M_n(\C)$. The following is a more general version
of Theorem \ref{thm.uniqueCartan-intro}.

\begin{theorem} \label{thm.uniqueCartan}
Let $\Gamma$ be a group in the family $\cG$ and $\Gamma \actson (X,\mu)$ a free ergodic p.m.p.\ action. Denote $M = \rL^\infty(X) \rtimes \Gamma$. Whenever $\Lambda \actson (Y,\eta)$ is a free ergodic p.m.p.\ action, $p \in \M_n(\C) \ot M$ is a projection and
$$\pi : \rL^\infty(Y) \rtimes \Lambda \recht p (\M_n(\C) \ot M)p$$
is an isomorphism, there exists a projection $q \in \D_n(\C) \ot \rL^\infty(X)$ and a unitary $u \in q(\M_n(\C) \ot M)p$ such that
$$\pi(\rL^\infty(Y)) = u^* (\D_n(\C) \ot \rL^\infty(X)) u \; .$$
\end{theorem}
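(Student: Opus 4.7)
The plan is to reduce the theorem to the single intertwining relation
$\pi(\rL^\infty(Y)) \embed{\M_n(\C) \ot M} \D_n(\C) \ot \rL^\infty(X)$. Once this is established, since both subalgebras are group measure space Cartan subalgebras of the II$_1$ factor $p(\M_n(\C) \ot M)p$, the standard conjugacy criterion for Cartan subalgebras (Theorem~A.1 of \cite{PBetti}) directly produces the projection $q$ and the unitary $u$ of the statement. Write $A = \rL^\infty(X)$, $B = \pi(\rL^\infty(Y))$ and identify, via $\pi$, the amplified corner $p(\M_n(\C) \ot M)p$ with $B \rtimes \Lambda$, denoting the canonical unitaries by $(v_s)_{s \in \Lambda}$. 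The proof is thus reduced to establishing the above intertwining.

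Next I would exploit the amalgamated free product structure $\Gamma = \Gamma_1 *_\Sigma \Gamma_2$ through the word-length deformation
$$\m_\rho : M \recht M , \qquad \m_\rho\Bigl(\sum_g a_g u_g\Bigr) = \sum_g \rho^{|g|} a_g u_g \quad (0 < \rho < 1),$$
which is u.c.p.\ by positive-definiteness of $\rho^{|g|}$ on $\Gamma$ (\cite{Boz-Pic}) and converges pointwise in $\|\,\cdot\,\|_2$ to $\id_M$ as $\rho \nearrow 1$. Feeding this deformation (extended to $\M_n(\C) \ot M$ by $\id \ot \m_\rho$) into the appropriate transfer result of Section~3 -- namely Lemma~\ref{lemma.crucial} with $M_0 = \rL(\Gamma_0)$ when $\Gamma_1$ has a non-amenable relative-property-(T) subgroup $\Gamma_0$, or Lemma~\ref{lemma.spectralgap} with $M_i = \rL(\Lambda_i)$ and a malleable dilation $\Mtil$ of the free product in the commuting-subgroups case -- yields, for every prescribed $\eps > 0$, some $\rho$ close to $1$ and an infinite sequence $s_k \in \Lambda$ satisfying
$$\|\m_\rho(v_{s_k}) - v_{s_k}\|_2 \leq \eps \qquad\text{and}\qquad \|E_{\D_n(\C) \ot A}(x v_{s_k} y)\|_2 \recht 0 \quad \forall\, x,y \in \M_n(\C) \ot M .$$
The first estimate forces the $\{u_g\}$-Fourier coefficients of $v_{s_k}$ to be essentially concentrated on $\{g : |g| \leq K\}$ for some fixed $K = K(\eps)$; the second forces those same coefficients to escape every finite subset of $\Gamma$ as $k \recht \infty$.

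Arguing by contradiction, suppose $B \notembed{\M_n(\C) \ot M} \D_n(\C) \ot A$. Using condition~(3) of Definition~\ref{def.G} to also exclude $B \embed{M} \rL(g\Sigma g^{-1})$ for individual conjugates of $\Sigma$, the intertwining machinery of \cite{P1,IPP} applied inside the amalgamated free product then produces a unitary $w \in B$ whose $\{u_g\}$-Fourier expansion is essentially supported on $\{g : |g| \geq 2K\}$. Abelianness of $B$ gives $v_{s_k} w v_{s_k}^* w^* = 1$, hence trace one. But by uniqueness of reduced expressions in $\Gamma_1 *_\Sigma \Gamma_2$, every group element in the support of $v_{s_k} w v_{s_k}^* w^*$ begins with the first $K$ letters of some $g$ in the support of $v_{s_k}$, which escape every finite set of length-$K$ prefixes as $k \recht \infty$, whereas every element in the support of the equal product $w^* v_{s_k} w v_{s_k}^*$ begins with $K$ letters drawn from the fixed finite essential prefix-support of $w$. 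These two prefix sets are eventually disjoint, giving asymptotic orthogonality and contradicting $\tau(v_{s_k} w v_{s_k}^* w^*) = 1$.

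The main obstacle is that the reduced-word argument requires operator-norm -- not merely $\|\,\cdot\,\|_2$ -- control of the approximations of $v_{s_k}$ and $w$ by elements of exact Fourier support, since the four-fold product $v_{s_k} w v_{s_k}^* w^*$ is not stable under $\|\,\cdot\,\|_2$-approximation alone. This is exactly what the Herz-Schur trimmings of Lemma~\ref{lemma.multiplier} provide: completely bounded, hence operator-norm bounded, cut-offs realizing the required Fourier-support constraints. The delicate bookkeeping consists in threading these c.b.\ estimates through the fourfold product, while simultaneously handling the absorption of $\Sigma$-elements at the interface between reduced words (which is where condition~(3) on the finiteness of intersections of conjugates of $\Sigma$ will enter most critically) and propagating everything through the amplification by $\M_n(\C)$.
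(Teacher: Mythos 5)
Your high-level strategy matches the paper's: a transfer lemma produces unitaries $v_{s_k} \in \pi(\rL^\infty(Y))\rtimes\Lambda$ of controlled $\{u_g\}$-length, you then argue in the amalgamated free product that abelianness of $B := \pi(\rL^\infty(Y))$ is incompatible with $B$ being "long", and Herz-Schur multipliers supply the operator-norm control needed to push the word-length combinatorics through the fourfold commutator. But there is a genuine gap in where you place the amalgam $\Sigma$, and a second, related gap in what hypothesis actually produces a "long" unitary $w \in B$.

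You apply the transfer lemmas with $P = \D_n(\C)\ot\rL^\infty(X)$, i.e.\ you only conclude that the $\{u_g\}$-Fourier coefficients of $v_{s_k}$ escape $A$. That is too weak. The paper applies Lemmas \ref{lemma.crucial} and \ref{lemma.spectralgap} with $P = A \rtimes \Sigma$ (still injective, since $A$ is abelian and $\Sigma$ amenable), obtaining $\|E_{A\rtimes\Sigma}(x\, v_{s_k}\, y)\|_2 \recht 0$. This stronger escape is what the combinatorial Lemma \ref{lemma.combinatorial} consumes: when you multiply $g$ of length $\leq K$ by $h$ of length $\geq 2K$ in $\Gamma_1 *_\Sigma \Gamma_2$, reduction at the interface can eat letters precisely because residues land in $\Sigma$, and the "bad" part $b_k$ of $u_g y_k u_h$ is estimated via $\|E_{A\rtimes\Sigma}(u_{g_i} y_k u_{h_i})\|_2 \recht 0$. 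Your claim that every element in the support of $v_{s_k} w v_{s_k}^* w^*$ "begins with the first $K$ letters of some $g$ in the support of $v_{s_k}$" is false at face value -- it is only true modulo a correction term, and that correction is controlled by escape from $A \rtimes \Sigma$, which escape from $A$ does not give.

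Relatedly, you misplace condition (3) of Definition \ref{def.G}. You invoke it to "exclude $B \embed{M} \rL(g\Sigma g^{-1})$" as a step towards producing the long unitary $w$, but this is not how the paper uses it, and moreover $B \notembed{M} A$ does not by itself produce a $w \in B$ supported on $\{|g| \geq 2K\}$: coefficients of $w$ escaping every finite set of $\Gamma$ is consistent with being concentrated on words of length $1$, since that set is already infinite. The actual route is: assume the conclusion of Lemma \ref{lem.lengths} fails, i.e.\ for every $\rho_0$ close to $1$ there is $w\in\cU(B)$ with $\tau(w^*\m_{\rho_0}(w))$ tiny, forcing $\|\cP_{K_2}(w)\|_2$ small; the commutator argument contradicts this; so the uniform bound $\tau(w^*\m_{\rho_0}(w))\geq\delta$ holds for all $w\in\cU(B)$; then Theorem \ref{thm.IPP} forces $B \embed{M} A\rtimes\Sigma$ (after ruling out $\cN_{pMp}(B)'' \embed{M} A\rtimes\Gamma_i$, which is impossible because $\cN_{pMp}(B)'' = pMp$ and $[\Gamma:\Gamma_i]=\infty$). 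Only at this last stage does condition (3) enter: combined with \cite[Theorem 6.16]{PV1}, it upgrades $B \embed{M} A \rtimes \Sigma$ to $B \embed{M} A$, which then yields the unitary conjugacy by \cite[Theorem A.1]{PBetti}. So the argument is genuinely two-step -- first to $A\rtimes\Sigma$, then to $A$ -- and collapsing it to a single step as you do is where the proof breaks.
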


In the rest of this section, we prove Theorems \ref{thm.uniqueCartan} and \ref{thm.no-Cartan}. We already deduce the following result, similar to \cite[Theorem 1.2]{PV3} which dealt with certain free product groups $\Gamma = \Gamma_1 * \Gamma_2$.

\begin{corollary}\label{cor.Sfactor}
Let $\Gamma = \Gamma_1 *_\Sigma \Gamma_2$ be a group in the family $\cG$. Assume that $\Gamma_1$ and $\Gamma_2$ are finitely generated  and assume that at least one of the $\Gamma_i$ has fixed prize with cost strictly larger than $1$ (e.g.\ $\Gamma_2 = \F_n$ for $2 \leq n < \infty$).

For any free ergodic p.m.p.\ action $\Gamma \actson (X,\mu)$, the II$_1$ factor $\rL^\infty(X) \rtimes \Gamma$ has trivial fundamental group. In other words, using the notation of \cite{PV3}, we have $\cS_{\text{\rm factor}}(\Gamma) = \{\{1\}\}$.
\end{corollary}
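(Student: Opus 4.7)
The strategy is to combine Theorem \ref{thm.uniqueCartan} with Gaboriau's cost theory, following the blueprint of \cite[Theorem 1.2]{PV3}. Any $t \in \cF(M)$ produces an isomorphism of the orbit equivalence relation $\cR := \cR(\Gamma \actson X)$ with its $t$-amplification $\cR^t$, and Gaboriau's calculus will then force $t = 1$ once we know that $\operatorname{cost}(\cR) > 1$.

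First I would pick $t \in \cF(M)$, realize $M^t = p(\M_n(\C) \ot M)p$ for an appropriate $n$ and projection $p \in \M_n(\C) \ot M$, and fix an isomorphism $\pi : M \recht p(\M_n(\C) \ot M)p$. Viewing the source as the group measure space factor $\rL^\infty(X) \rtimes \Gamma$, Theorem \ref{thm.uniqueCartan} applied with $\Lambda \actson (Y,\eta)$ equal to the given action $\Gamma \actson (X,\mu)$ supplies a projection $q \in \D_n(\C) \ot \rL^\infty(X)$ and a unitary $u \in q(\M_n(\C) \ot M)p$ with $\pi(\rL^\infty(X)) = u^*(\D_n(\C) \ot \rL^\infty(X))u$. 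By Singer's theorem \cite{Si,FM} in its standard amplified form, an isomorphism of group measure space II$_1$ factors that carries one Cartan subalgebra onto another implements an isomorphism of the underlying orbit equivalence relations. Hence $\cR \cong \cR^t$.

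Next I would verify that $\operatorname{cost}(\cR) > 1$. Since $\Sigma$ is amenable, the subrelation $\cR_\Sigma$ of $\cR$ generated by $\Sigma$ is hyperfinite, hence treeable. Writing $\cR_i$ for the subrelation generated by $\Gamma_i$, Gaboriau's additivity formula for equivalence relations that split as an amalgamated free product over a treeable subrelation gives
\[ \operatorname{cost}(\cR) = \operatorname{cost}(\cR_1) + \operatorname{cost}(\cR_2) - \operatorname{cost}(\cR_\Sigma). \]
By the fixed-price hypothesis, one $\Gamma_i$ is finitely generated with $\operatorname{cost}(\cR_i) = c_i > 1$. For the remaining index $j$, the inclusion $\cR_\Sigma \subset \cR_j$ yields $\operatorname{cost}(\cR_j) \geq \operatorname{cost}(\cR_\Sigma)$, so the last two terms contribute at least $0$ and we obtain $\operatorname{cost}(\cR) \geq c_i > 1$.

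Finally, Gaboriau's amplification formula $\operatorname{cost}(\cR^t) - 1 = t^{-1}(\operatorname{cost}(\cR) - 1)$, combined with $\cR \cong \cR^t$, yields $(\operatorname{cost}(\cR) - 1)(1 - t^{-1}) = 0$; since $\operatorname{cost}(\cR) > 1$ this forces $t = 1$. The only genuinely nontrivial input is Theorem \ref{thm.uniqueCartan}, which already carries the analytic weight of transferring the Cartan decomposition through an arbitrary amplification; once that is granted, the remaining steps are a mechanical application of Gaboriau's cost calculus \cite{G2}, so this is where I expect the main difficulty to have already been absorbed.
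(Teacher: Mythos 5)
Your proposal follows exactly the route of the paper: Theorem \ref{thm.uniqueCartan} reduces the fundamental group of $M$ to the fundamental group of $\cR = \cR(\Gamma \actson X)$ (via Singer--Feldman--Moore), and Gaboriau's cost theory then shows this group is trivial. The paper simply cites \cite[Th\'eor\`eme IV.15]{G2} to get $1 < \operatorname{cost}(\cR) < \infty$ and \cite[Proposition II.6]{G2} for the conclusion; you unpack the first citation via the amalgamated-free-product cost formula and the second via the amplification formula, so there is no genuine difference in strategy.

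One small point deserves care. You justify $\operatorname{cost}(\cR_j) \geq \operatorname{cost}(\cR_\Sigma)$ by ``the inclusion $\cR_\Sigma \subset \cR_j$ yields'' this inequality, which reads as an appeal to monotonicity of cost under inclusion of subrelations. That is \emph{not} a general fact: e.g.\ $\cR(\F_2 \actson X)$ sits inside $\cR((\F_2 \times \Z) \actson X)$ yet has strictly larger cost. The inequality is nevertheless true in your setting, but for a more specific reason: since $\cR_\Sigma$ is hyperfinite, its cost equals the universal lower bound $1 - \int |\cR_\Sigma(x)|^{-1}\,d\mu(x)$, while any equivalence relation $\cR_j \supset \cR_\Sigma$ satisfies $\operatorname{cost}(\cR_j) \geq 1 - \int |\cR_j(x)|^{-1}\,d\mu(x) \geq 1 - \int |\cR_\Sigma(x)|^{-1}\,d\mu(x)$. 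With this corrected justification the cost estimate $\operatorname{cost}(\cR) \geq c_i > 1$ goes through, and finite generation of $\Gamma_1,\Gamma_2$ gives $\operatorname{cost}(\cR) < \infty$, so the argument is complete.
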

\begin{proof}
Theorem \ref{thm.uniqueCartan} implies that the fundamental group of $\rL^\infty(X) \rtimes \Gamma$ equals the fundamental group of the orbit equivalence relation $\cR = \cR(\Gamma \actson X)$. By \cite[Th\'{e}or\`{e}me IV.15]{G2}, $\cR$ has cost strictly between $1$ and $\infty$. It then follows from \cite[Proposition II.6]{G2} that $\cR$ has trivial fundamental group.
\end{proof}

\subsection{Deformation of amalgamated free product factors} \label{sec.deformation-AFP}

Let $M_1$, $M_2$ be von Neumann algebras equipped with a faithful normal tracial state $\tau$. Assume that $P$ is a common von Neumann subalgebra of $M_1$ and $M_2$ and that the traces of $M_1$, $M_2$ coincide on $P$. Denote by $M = M_1 *_P M_2$ the amalgamated free product with respect to the unique trace preserving conditional expectations (see \cite{popa-amal} and \cite{voi}) and still denote by $\tau$ the canonical tracial state on $M$. The Hilbert space $\rL^2(M)$ can be explicitly realized as follows, where we denote $\rL^2(\Mbol_i) := \rL^2(M_i) \ominus \rL^2(P)$.
$$\rL^2(M) = \rL^2(P) \oplus \bigoplus_{i_1 \neq i_2, i_2 \neq i_3, \cdots, i_{n-1} \neq i_n} \bigl( \rL^2(\Mbol_{i_1}) \ot_P \rL^2(\Mbol_{i_2}) \ot_P \cdots \ot_P \rL^2(\Mbol_{i_n}) \bigr) \; .$$
For all $0 < \rho < 1$, denote by $\m_\rho \in \B(\rL^2(M))$ the operator given by multiplication with the positive scalar $\rho^n$ on $\rL^2(\Mbol_{i_1}) \ot_P \rL^2(\Mbol_{i_2}) \ot_P \cdots \ot_P \rL^2(\Mbol_{i_n})$. Actually (and this will incidentally be a consequence from the following discussion), there is a unique normal unital completely positive map $\m_\rho : M \recht M$ whose extension to $\rL^2(M)$ is the $\m_\rho$ that we have just defined.

Following \cite[Section 2.2]{IPP}, we can define the following deformation of $M$. Define, for $i=1,2$, $\Mtil_i := M_i *_P (P \ovt \rL(\Z))$. Define $\Mtil := \Mtil_1 *_P \Mtil_2$ and observe that we have a canonical identification $\Mtil = M *_P (P \ovt \rL(\F_2))$. We now define a one-parameter group of automorphisms $\al_t$ of $\Mtil$.

Whenever $\al \in \Aut(\Mtil_1)$ and $\be \in \Aut(\Mtil_2)$ are both the identity when restricted to $P$, we have a unique $\al * \be \in \Aut(\Mtil)$ simultaneously extending $\al$ and $\be$. Denote by $u$ the canonical unitary generator of $\rL(\Z)$ viewed as a subalgebra of $\Mtil_1$ and denote by $v$ the canonical unitary generator of $\rL(\Z)$ viewed as a subalgebra of $\Mtil_2$. Let $f : \T \recht (-\pi,\pi]$ be the unique map satisfying $z = \exp(i f(z))$ for all $z \in \T$. Define the self-adjoint elements $h \in \Mtil_1$ and $k \in \Mtil_2$ as $h = f(u)$ and $k = f(v)$. Put, for all $t \in \R$, $u_t = \exp(i t h)$ and $v_t = \exp(i t k)$. Then, $\Ad(u_t) \in \Aut(\Mtil_1)$ and $\Ad(v_t) \in \Aut(\Mtil_2)$ are both the identity on $P$, so that we can define $\al_t \in \Aut(\Mtil)$ as $\al_t := \Ad(u_t) * \Ad(v_t)$.

Define $\rho_t = \tau(u_t)^2 = \tau(v_t)^2$. It happens to be that
$\rho_t = \frac{\sin^2(\pi t)}{(\pi t)^2}$ and hence, if $t$
decreases from $1$ to $0$, then $\rho_t$ increases from $0$ to $1$.
By using the definition of the trace on $M$, it is easy to see that
$$E_M(\al_t(x)) = \m_{\rho_t}(x) \; .$$
This also shows that $\m_{\rho}$ is a normal
unital completely positive map on $M$ for all $0 < \rho < 1$.

Observe that, when $P$ is injective, the $M$-$M$-bimodule $\bim{M}{\bigl(\rL^2(\Mtil) \ominus \rL^2(M)\bigr)}{M}$ is weakly contained in the coarse $M$-$M$-bimodule $\bim{M}{(\rL^2(M) \ot \rL^2(M))}{M}$ (see \cite[Proposition 3.1]{C-H} for a detailed argument). So, we are then in a situation where Lemma \ref{lemma.spectralgap} can potentially be applied.

We now present one of the main technical results from \cite{IPP}. Since the statement as we need it, is not exactly formulated in \cite{IPP}, we give a sketch of proof, indicating the different steps from \cite{IPP} that are needed to prove the result. Recall that the \emph{normalizer} $\cN_N(Q)\dpr$ of a von Neumann subalgebra $Q \subset N$ is the von Neumann algebra generated by the group of unitaries $u \in N$ satisfying $uQu^* = Q$.

\begin{theorem} \label{thm.IPP}
Let $M_1, M_2$ be tracial von Neumann algebras with a common von Neumann subalgebra $P$ on which the traces coincide. Denote by $M = M_1 *_P M_2$ the amalgamated free product w.r.t.\ the trace preserving conditional expectations. Denote, for $0 < \rho < 1$, by $\m_\rho$ the completely positive map on $M$ introduced above. Let $p \in M$ be a projection and $Q \subset p M p$ a von Neumann subalgebra.

If there exists $0 < \rho < 1$ and $\delta > 0$ such that $\tau(v^* \m_\rho(v)) \geq \delta$ for all $v \in \cU(Q)$, then $Q \embed{M} P$ or $\cN_{pMp}(Q)\dpr \embed{M} M_i$ for some $i \in \{1,2\}$. As explained above, $\cN_{pMp}(Q)\dpr$ denotes the normalizer of $Q$ inside $pMp$.
\end{theorem}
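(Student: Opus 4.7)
The plan is to convert the single-point estimate $\tau(v^*\m_\rho(v)) \geq \delta$ into an intertwining-by-bimodules statement inside $\Mtil$ via the deformation $\al_t$, and then to analyse the resulting bimodule against the amalgamated free product structure of $\Mtil = M *_P (P \ovt \rL(\F_2))$ to force the dichotomy.

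\emph{Step 1.} Let $t > 0$ be the unique parameter with $\rho_t = \rho$, so that $E_M \circ \al_t = \m_\rho$ on $M$. For every $v \in \cU(Q)$, the hypothesis together with Cauchy--Schwarz yields
\[
\delta \;\leq\; \tau\bigl(v^* \m_\rho(v)\bigr) \;=\; \tau\bigl(v^* E_M(\al_t(v))\bigr) \;\leq\; \|E_M(\al_t(v))\|_2 \; .
\]
Since $\cU(\al_t(Q)) = \al_t(\cU(Q))$, this shows $\|E_M(w)\|_2 \geq \delta$ for every $w \in \cU(\al_t(Q))$. By the intertwining-by-bimodules criterion (Section~\ref{sec.prelim}), $\al_t(Q) \embed{\Mtil} M$; concretely, there is a non-zero $\al_t(Q)$-$M$-subbimodule $\cK \subset \rL^2(\Mtil)$ which is finitely generated as a right $M$-module.

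\emph{Step 2.} The bimodule $\cK$ can be encoded by a non-zero partial isometry $w \in \Mtil$ intertwining $\al_t(Q)$ into $M$. Writing out $w$ along the alternating-word decomposition of $\Mtil$ with respect to the amalgamation over $P$ and the two auxiliary copies of $\rL(\Z)$ sitting inside $\Mtil_1$ and $\Mtil_2$, one now runs the technical AFP analysis of \cite[Sections~4--5]{IPP}. The outcome is a dichotomy. Either the reduced-word expansion of $w$ is concentrated at the trivial pattern, which, using $\al_t|_P = \id$ and the way $\al_t$ spreads non-trivial $M$-elements through the free factors, forces $Q \embed{M} P$. Or the dominant word-pattern of $w$ is aligned with a single vertex $\Mtil_i$; in this case the normaliser-invariance of the intertwining bimodule (automatic from $\al_t(u) \al_t(Q) = \al_t(Q) \al_t(u)$ for $u \in \cN_{pMp}(Q)$) upgrades the intertwining from $\al_t(Q)$ to $\al_t(\cN_{pMp}(Q)\dpr)$ and localises it inside $M_i$, yielding $\cN_{pMp}(Q)\dpr \embed{M} M_i$.

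The main obstacle is Step~2: Step~1 is essentially Cauchy--Schwarz combined with the intertwining criterion, but extracting the clean dichotomy from the intertwiner $\cK$ requires a delicate combinatorial control of reduced words in $\Mtil = M *_P (P \ovt \rL(\F_2))$ and a precise tracking of how the malleable deformation $\al_t$ mixes the auxiliary $\rL(\Z)$-generators through the alternating factorisation. The asymmetry ``$Q \embed P$ versus $\cN_{pMp}(Q)\dpr \embed M_i$'' reflects whether the intertwining collapses onto the amalgam or localises onto a single vertex of the free-product tree, and it is this case analysis---rather than the deformation step---that constitutes the technical heart of the argument.
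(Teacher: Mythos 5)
Your Step~1 is sound: the single--point estimate does force $\al_t(Q) \embed{\Mtil} M$ via the intertwining criterion, and this is essentially parallel to what the paper does (the paper instead takes the minimal--norm element $y$ in the closed convex hull of $\{v^* \al_t(v)\}$, which gives a more concrete intertwiner $y \in p\Mtil\al_t(p)$ with $xy = y\al_t(x)$). But Step~2 has two genuine gaps, and these are exactly where the technical content lives.

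First, you jump directly from the intertwiner at level $\al_t$ (for \emph{small} $t$) to a word analysis in $\Mtil$. This does not work: for small $t$, $\al_t$ barely moves $M$, so the intertwiner is essentially trivial and the reduced--word combinatorics in $\Mtil = M *_P (P \ovt \rL(\F_2))$ give no information. The crucial missing step is the \emph{bootstrap} from $\al_t$ to $\al_1$: using Popa's $s$--malleability/transversality trick (this is what \cite[Proof of 3.3]{IPP}, or equivalently Step~(2) of \cite[5.6]{Houd}, does), one iterates the deformation doubling $t$ each time and produces a non--zero $z \in p\Mtil\al_1(p)$ with $xz = z\al_1(x)$ for all $x \in Q$. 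Only at $t=1$ is the deformed algebra moved into a freely complementary position, which is what makes the combinatorial analysis bite. Your proposal never mentions this iteration.

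Second, your claim that the normalizer upgrade is ``automatic from $\al_t(u)\al_t(Q) = \al_t(Q)\al_t(u)$'' is false. That commutation only says $\al_t(u)$ normalizes $\al_t(Q)$, which is trivially true for any automorphism $\al_t$ and gives you nothing. Going from $Q \embed{M} M_i$ to $\cN_{pMp}(Q)\dpr \embed{M} M_i$ is a deep structural theorem about amalgamated free products, namely \cite[Theorem~1.1]{IPP}, and it \emph{crucially} requires the standing assumption $Q \notembed{M} P$ — indeed if $Q \subset P$, one may have $Q \embed{M} M_i$ while $\cN_{pMp}(Q)\dpr$ is all of $pMp$, so no embedding of the normalizer is possible. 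The paper's proof structure makes this visible: one assumes $Q \notembed{M} P$, produces the $\al_1$--intertwiner, deduces $Q \embed{M} M_i$ for some $i$ (via the AFP word analysis, \cite[Proof of 4.3]{IPP}), and only then invokes \cite[Theorem~1.1]{IPP} to pass to the normalizer. Your dichotomy mixes up these two logically separate reductions.
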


\begin{proof}
Clearly, $\tau(v^* \m_\rho(v))$ increases when $\rho$ increases. So, with the notation introduced before the theorem, we can take $0 < t_0 < 1$ such that
$$\tau(v^* \al_t(v)) = \tau(v^* E_M(\al_t(v))) = \tau(v^* \m_{\rho_t}(v)) \geq \delta \quad\text{for all}\;\; v \in \cU(Q) \; , \; 0 < t \leq t_0 \; .$$
In particular, we can take $t$ of the form $t = 2^{-n}$ such that $\tau(v^* \al_t(v)) \geq \delta$ for all $v \in \cU(Q)$. Denote by $y \in p \Mtil \al_t(p)$ the unique element of minimal $\| \cdot \|_2$ in the closed convex hull of $\{v^* \al_t(v) \mid v \in \cU(Q)\}$. It follows that $y \neq 0$ because $\tau(y) \geq \delta$ and that $x y = y \al_t(x)$ for all $x \in Q$.

Assume that $Q \notembed{M} P$. We have to prove that $\cN_{pMp}(Q)\dpr \embed{M} M_i$ for some $i \in \{1,2\}$. Repeating \cite[Proof of 3.3]{IPP} (see also \cite[Step (2) of 5.6]{Houd}), we find a non-zero $z \in p \Mtil \al_1(p)$ satisfying $x z = z \al_1(x)$ for all $x \in Q$. As in \cite[Proof of 4.3]{IPP} (actually, literally repeating \cite[Step (3) of 5.6]{Houd}), we can conclude that for $i=1$ or $i=2$, we have $Q \embed{M} M_i$. Since we assumed that $Q \notembed{M} P$, \cite[Theorem 1.1]{IPP} (see also \cite[Theorem 4.6]{Houd}) implies that the normalizer $\cN_{pMp}(Q)\dpr$ can be embedded into $M_i$ inside $M$.
\end{proof}

For later use, we also record the following non-optimal inequality. As positive operators on $\rL^2(M)$, we have $(1-\m_\rho)^2 \leq 1- \m_\rho \leq 1 - \m_\rho^2$. Hence, $\|x - \m_\rho(x)\|_2^2 \leq \|x\|_2^2 - \|\m_\rho(x)\|_2^2$. On the other hand, for all $t \in \R$ and $x \in M$, we have $$\|\al_t(x) - E_M(\al_t(x))\|_2^2 = \|x\|_2^2 - \|E_M(\al_t(x))\|_2^2 = \|x\|_2^2 - \|\m_{\rho_t}(x)\|_2^2 \; .$$
It follows that
\begin{equation}\label{eq.some-estimate}
\|x - \m_{\rho_t}(x)\|_2 \leq \|\al_t(x) - E_M(\al_t(x))\|_2 \quad\text{for all}\;\; x \in M \; , \; 0 < t < 1 \; .
\end{equation}

Finally, let $\Gamma = \Gamma_1 *_\Sigma \Gamma_2$ be an amalgamated free product of groups. Assume that $\Gamma$ acts in a trace preserving way on the tracial von Neumann algebra $(A,\tau)$. Put $M = A \rtimes \Gamma$. Putting $M_i = A \rtimes \Gamma_i$ for $i=1,2$ and $P = A \rtimes \Sigma$, we have $M = M_1 *_P M_2$ in a canonical way. Moreover, for all $0 < \rho < 1$, $a \in A$, $g \in \Gamma$, we have
$$\m_\rho(a u_g) = \rho^{|g|} a u_g \; ,$$
where $|g|$ denotes the length of $g$ in the sense explained at the beginning of Section \ref{sec.Schur}.

\subsection{A combinatorial lemma}

Fix an amalgamated free product $\Gamma = \Gamma_1 *_\Sigma \Gamma_2$ and
a trace preserving action $\Gamma \actson (A,\tau)$ of $\Gamma$ on the tracial von Neumann
algebra $(A,\tau)$. Put $M = A \rtimes \Gamma$ and $P = A \rtimes \Sigma$.

For every $K \in \N$, denote by $\cP_K$ the orthogonal projection of $\rL^2(M)$
onto the closed linear span of $\{a u_g \mid |g| \leq K, a \in A\}$.

Whenever $g_0,h_0 \in (\Gamma_1 - \Sigma) \cup (\Gamma_2 - \Sigma)$, denote by
$W_{g_0,h_0}$ the subset of $\Gamma$ consisting of those $g \in \Gamma$ with
$|g| \geq 2$ admitting a reduced expression starting with $g_0$ and ending with $h_0$ (this means that any reduced expression for $g$ starts with $g_0 \sigma$ and ends with $\sigma' h_0$ for some $\sigma,\sigma' \in \Sigma)$. Denote by $\cP_{g_0,h_0}$ the orthogonal projection of $\rL^2(M)$ onto the closed linear span of $\{a u_g \mid g \in W_{g_0,h_0}, a \in A\}$.

In general, whenever $W \subset \Gamma$, denote by $\cP_W$ the orthogonal projection of $\rL^2(M)$ onto the closed linear span of $\{a u_g \mid g \in W, a \in A\}$.

\begin{lemma}\label{lemma.combinatorial}
Let $K \in \N$ and assume that $(y_k)$ is a bounded sequence in $M$ with the following properties.
\begin{itemize}
\item $y_k = \cP_K(y_k)$ for all $k$.
\item $\|E_P(x y_k z)\|_2 \recht 0$ for all $x,z \in M$.
\end{itemize}
Let $g,h \in \Gamma$ with $|g|,|h| \geq K$ and write $g,h$ as reduced expressions. Denote by $g_0$ the first letter of $g$ and by $h_0$ the last letter of $h$. Then, we can write
$$u_g y_k u_h = a_k + b_k$$
where $a_k,b_k$ are bounded sequences in $M$ satisfying the following properties.
\begin{itemize}
\item $a_k = \cP_{g_0,h_0}(a_k)$ for all $k$.
\item $\|b_k\|_2 \recht 0$.
\end{itemize}
\end{lemma}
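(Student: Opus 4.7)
The plan is to Fourier-expand $y_k = \sum_{|s|\leq K} a_s^{(k)} u_s$ in $\rL^2(M)$ with $a_s^{(k)}\in A$, so that
\[
u_g y_k u_h \;=\; \sum_{|s|\leq K} \sigma_g(a_s^{(k)})\, u_{gsh},
\]
and then isolate the contributions coming from the ``bad'' set $S_{\text{bad}} := \{s : |s|\leq K,\ gsh\notin W_{g_0,h_0}\}$. The task reduces to constructing $b_k\in M$, uniformly bounded, carrying the $S_{\text{bad}}$-part of $u_g y_k u_h$ and satisfying $\|b_k\|_2\recht 0$; then $a_k := u_g y_k u_h - b_k \in M$ will have Fourier support in $W_{g_0,h_0}$, giving $a_k = \cP_{g_0,h_0}(a_k)$.

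The combinatorial heart of the argument is the claim that there exist finitely many pairwise disjoint $\Sigma$-double cosets $\alpha_1\Sigma\beta_1,\ldots,\alpha_N\Sigma\beta_N$ in $\Gamma$ (depending only on $g,h,K$) such that
\[
S_{\text{bad}} \;=\; \bigsqcup_{j=1}^N \bigl(\alpha_j\Sigma\beta_j \,\cap\, \{s : |s|\leq K\}\bigr).
\]
The proof rests on a case analysis of cancellations in the reduced form of $g\cdot s\cdot h$: because $|s|\leq K\leq |g|,|h|$, the cancellation at each of the junctions $g\cdot s$ and $s\cdot h$ is sharply constrained. Failure of $gsh$ to start with $g_0$ requires either a \emph{full} left-cancellation of $g$ by $s$ (forcing $|g|=K$ and pinning $s$ down to $g^{-1}\Sigma$), or a cascading right-cancellation of $gs$ through a prefix of $h$ (forcing $s\in g^{-1}h_{\text{pre}}^{-1}\Sigma$ for some prefix $h_{\text{pre}}$ of $h$); the case $|gsh|\leq 1$ and the symmetric failure at $h_0$ are handled analogously. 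Each such cancellation pattern specifies $s$ up to a single $\Sigma$-double coset, and distinct patterns yield disjoint cosets, as their underlying reduced-word data differ.

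Granted the claim, define
\[
b_k \;:=\; \sum_{j=1}^N u_{g\alpha_j}\, E_P\!\bigl(u_{\alpha_j^{-1}}\, y_k\, u_{\beta_j^{-1}}\bigr)\, u_{\beta_j h}\;\in\; M.
\]
Since conditional expectations are contractive in operator norm, $(b_k)$ is uniformly bounded. Unfolding $E_P$ through the Fourier expansion and using that $a_s^{(k)}=0$ whenever $|s|>K$, each summand equals $\sum_{s\in\alpha_j\Sigma\beta_j,\,|s|\leq K} \sigma_g(a_s^{(k)})u_{gsh}$; by the disjoint exhaustion of $S_{\text{bad}}$ afforded by the claim, $b_k = \sum_{s\in S_{\text{bad}}} \sigma_g(a_s^{(k)}) u_{gsh}$. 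The hypothesis $\|E_P(xy_kz)\|_2\recht 0$, applied with $x=u_{\alpha_j^{-1}}$ and $z=u_{\beta_j^{-1}}$ for each $j$, then yields $\|b_k\|_2\recht 0$, and $a_k := u_g y_k u_h - b_k$ has Fourier support in $\{gsh : s\notin S_{\text{bad}},\,|s|\leq K\}\subset W_{g_0,h_0}$.

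The principal obstacle is the combinatorial claim itself. The delicate points are to ensure that each chosen $\Sigma$-double coset (restricted to $\{|s|\leq K\}$) is \emph{entirely} contained in $S_{\text{bad}}$ (so no ``good'' $s$ slips into the subtraction and spoils the support property of $a_k$), and that the resulting cosets are pairwise disjoint (so no double-counting occurs in $b_k$). The length constraint $K\leq |g|,|h|$ is essential: it rules out excessive cancellation and reduces the enumeration of bad patterns to a finite, explicitly describable family, after which the rigidity of reduced-word decompositions in amalgamated free products does the rest.
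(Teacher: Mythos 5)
Your approach mirrors the paper's (subtract off a piece supported on a controlled family of cosets derived from suffix/prefix cancellations, and use the hypothesis $\|E_P(xy_kz)\|_2\recht 0$ to kill it), but the specific formula you use for $b_k$ forces you to assume something the paper carefully avoids assuming, and that assumption is a genuine gap.

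The paper also enumerates pairs $(g_i,h_i)$ with $g_i$ a suffix of $g$, $h_i$ a prefix of $h$, $|g_i|+|h_i|\leq K$, and sets $W_i = g g_i^{-1}\Sigma h_i^{-1} h$ (in your $s$-variable these are exactly your $\alpha_j\Sigma\beta_j$). The crucial device in the paper is to define $\cP_W$ via the \emph{commuting} orthogonal projections $\cP_{W_i}$, namely $1-\cP_W=\prod_i(1-\cP_{W_i})$, so that $\cP_W$ is completely bounded and $a_k := (1-\cP_W)(u_g y_k u_h)$ is automatically supported on the complement of $W=\bigcup_i W_i$, with $\|b_k\|_2^2 = \|\cP_W(u_gy_ku_h)\|_2^2\le\sum_i\|E_P(u_{g_i}y_ku_{h_i})\|_2^2\recht 0$. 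This works whether or not the $W_i$ overlap, and it also doesn't matter whether $W$ equals the bad set or strictly contains it (over-subtraction is harmless, since the remainder is still supported on the good set). Your $b_k = \sum_j\cP_{W_j}(u_gy_ku_h)$ is not $\cP_W(u_gy_ku_h)$ unless the $W_j$ are pairwise disjoint: if some $s$ lies in two of the sets $\alpha_j\Sigma\beta_j$, the Fourier coefficient of $a_k$ at $gsh$ becomes a nonzero multiple of $\sigma_g(a_s^{(k)})$, which destroys the property $a_k=\cP_{g_0,h_0}(a_k)$. Sets of the form $\alpha\Sigma\beta$ do \emph{not} partition $\Gamma$ (they can meet without coinciding unless $\beta_1\beta_2^{-1}$ normalizes $\Sigma$), so the disjointness you assert from ``distinct cancellation patterns yield disjoint cosets'' is neither automatic nor established, and I do not see an easy way to fix the enumeration so that it holds. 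You also worry (unnecessarily) about the reverse inclusion, i.e.\ that each $\alpha_j\Sigma\beta_j\cap\{|s|\le K\}$ be contained in $S_{\text{bad}}$; that requirement can be dropped entirely. The one requirement that actually matters for your formula, disjointness, is the one that is missing. The cleanest repair is to replace your explicit sum by the paper's $\cP_W = 1-\prod_i(1-\cP_{W_i})$ and only use $S_{\text{bad}}\subset\bigcup_j\alpha_j\Sigma\beta_j$ (the inclusion, not the equality), which is the genuinely combinatorial fact and is substantially easier to verify than either your disjointness claim or your exact decomposition of $S_{\text{bad}}$.
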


\begin{proof}
We fix once and for all words with letters alternatingly from $\Gamma_1 - \Sigma$ and $\Gamma_2 - \Sigma$ representing the elements $g$ and $h$.

Let $(g_1,h_1), \cdots, (g_N,h_N)$ be an enumeration of all pairs of words $(g',h')$ satisfying
\begin{itemize}
\item $|g'| + |h'| \leq K$,
\item the fixed word representing $g$ ends with the subword $g'$,
\item the fixed word representing $h$ starts with the subword $h'$.
\end{itemize}
Define $W_i = gg_i^{-1} \Sigma h_i^{-1}h$ and $W = \bigcup_{i=1}^N W_i$.
Observe that $$\cP_{W_i}(x) = u_g u_{g_i}^* E_P(u_{g_i}u_g^* x u_h^* u_{h_i}) u_{h_i}^*u_h$$
for all $x \in M$. Hence, $\cP_{W_i}$ is completely bounded as a map from $M$ to $M$.
The orthogonal projections $\cP_{W_i}$ commute and hence
$$1 - \cP_W = (1-\cP_{W_1}) \cdots (1-\cP_{W_N}) \; .$$
So, $\cP_W$ is completely bounded on $M$. We put $b_k = \cP_W(u_g y_k u_h)$ and
$a_k = u_g y_k u_h - b_k$. So, $a_k$ and $b_k$ are bounded sequences in $M$ with
$u_g y_k u_h = a_k + b_k$.

First observe that
$$\|b_k\|_2^2 \leq \sum_{i=1}^N \|\cP_{W_i}(u_g y_k u_h)\|_2^2 =
\sum_{i=1}^N \|E_P(u_{g_i} y_k u_{h_i})\|_2^2 \recht 0 \; .$$
It remains to prove that $a_k = \cP_{g_0,h_0}(a_k)$ for all $k$.
But, if $r \in \Gamma$ with $|r|\leq K$ and if $grh$ admits no reduced expression
that starts with $g_0$ and ends with $h_0$, there must exist $i \in \{1,\ldots,N\}$
such that $g_i r h_i \in \Sigma$ and hence $grh \in W$. As a consequence,
whenever $y \in M$ with $y = \cP_K(y)$, we have
$$u_g y u_h - \cP_W(u_g y u_h) \in \cP_{g_0,h_0}(\rL^2(M)) \; .$$
This concludes the proof of the lemma.
\end{proof}

\subsection{Group measure space Cartan subalgebras can be intertwined into $A \rtimes \Sigma$}

Fix $\Gamma = \Gamma_1 *_\Sigma \Gamma_2$ satisfying the conditions 1 and 2 of Definition \ref{def.G}. Let $\Gamma \actson (A,\tau)$ be a trace preserving action of $\Gamma$ on the tracial injective von Neumann algebra $(A,\tau)$. Put $M = A \rtimes \Gamma$ and $P = A \rtimes \Sigma$. Note that $P$ is injective, because $A$ is injective and $\Sigma$ is amenable.

\begin{theorem} \label{thm.embedding}
Whenever $p \in M$ is a non-zero projection and $pMp = B \rtimes \Lambda$ is a crossed product decomposition where $\Lambda \actson (B,\tau)$ is a trace preserving action on the abelian von Neumann algebra $B$, then $B \embed{M} P$.
\end{theorem}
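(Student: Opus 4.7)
The plan is to apply the transfer lemmas of Section 3 to the word-length deformation of $M = A \rtimes \Gamma$ coming from the amalgamated free product structure of Section \ref{sec.deformation-AFP}, and then to combine Herz-Schur multiplier trimming with the combinatorial support analysis of Lemma \ref{lemma.combinatorial} to derive a contradiction from the assumption $B \notembed{M} P$. Since $A$ is injective and $\Sigma$ is amenable, $P = A \rtimes \Sigma$ is injective, so it is a legitimate target algebra in both Lemma \ref{lemma.crucial} and Lemma \ref{lemma.spectralgap}.

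According to which alternative in condition (1) of Definition \ref{def.G} is satisfied, I first apply either Lemma \ref{lemma.crucial} (using $\vphi_n = \m_{\rho_n}$ for some $\rho_n \nearrow 1$, and $M_0$ built from the non-amenable subgroup $\Gamma_0 < \Gamma_1$ with the relative property (T)), or Lemma \ref{lemma.spectralgap} (using the deformation $\al_t \in \Aut(\Mtil)$ from Section \ref{sec.deformation-AFP} and $M_1,M_2$ built from the two commuting non-amenable subgroups of $\Gamma_1$; the inequality \eqref{eq.some-estimate} then converts conclusion (1) of that lemma into an $\m_\rho$-almost-invariance statement). In both cases, for any prescribed $\eps > 0$ one obtains some $\rho \in (0,1)$ close to $1$ and an infinite sequence $(s_k)_k$ in $\Lambda$ satisfying
$$\|v_{s_k} - \m_\rho(v_{s_k})\|_2 \leq \eps \quad\text{and}\quad \|E_P(x v_{s_k} y)\|_2 \recht 0 \;\;\text{for all}\;\; x,y \in M \; .$$
Since $\m_\rho$ acts by multiplication by $\rho^{|g|}$ on $Au_g$, the first estimate forces $v_{s_k}$ to be $\|\cdot\|_2$-close to $\cP_K(v_{s_k})$ for a fixed $K = K(\rho,\eps)$, uniformly in $k$, while the second forces the Fourier support of $v_{s_k}$ to eventually leave every finite subset of $\Gamma$.

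Assume now, towards a contradiction, that $B \notembed{M} P$. The intertwining-by-bimodules characterization then produces unitaries $w \in \cU(B)$ making $\|E_P(xwy^*)\|_2$ arbitrarily small on any given finite subset of $M$. To upgrade the $\|\cdot\|_2$-estimates on $v_{s_k}$ and $w$ into Fourier truncations with \emph{bounded operator norms}, I invoke Lemma \ref{lemma.multiplier}: part (1) trims $v_{s_k}$ to an element $v'_{s_k}$ of operator norm at most $2$ with exact Fourier support in $\{|g| \leq K'\}$, while part (2) trims $w$ to an element $w'$ of operator norm at most $3$ with exact Fourier support in $\{|g| \geq L\}$, for any prescribed $L \gg K'$; both operations preserve the relevant $E_P$-vanishing statements and introduce only small, uniformly controlled $\|\cdot\|_2$-errors. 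Applying Lemma \ref{lemma.combinatorial} to the pieces $u_h (v'_{s_k})^* u_{h'}$ (with $|h|,|h'| \geq L$) appearing in the expansions of $v'_{s_k}\, w'\, (v'_{s_k})^* (w')^*$ and of $(w')^* v'_{s_k}\, w'\, (v'_{s_k})^*$ shows that, as $k \recht \infty$, the former concentrates on reduced words whose first letter is the first letter of a word in the Fourier support of $v_{s_k}$ (hence drifts to infinity in $k$), whereas the latter concentrates on reduced words whose first letter lies in the fixed finite set determined by the Fourier support of $w'$. Consequently the two elements become almost $\rL^2$-orthogonal as $k \recht \infty$. On the other hand, $v_{s_k}$ normalizes the abelian $B$, so $v_{s_k} w v_{s_k}^* \in B$ commutes with $w$, forcing the untrimmed expressions $v_{s_k} w v_{s_k}^* w^*$ and $w^* v_{s_k} w v_{s_k}^*$ to be equal unitaries in $pMp$ with $\|\cdot\|_2^2 = \tau(p) > 0$, while the trimming error is uniformly much smaller than $\tau(p)$. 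This contradiction proves $B \embed{M} P$.

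The main obstacle is that this final comparison is a $\|\cdot\|_2$-control on a four-fold product of operators, so $\|\cdot\|_2$-approximations of the factors do not suffice: uniform operator norm bounds on the trimmed $v'_{s_k}$ and $w'$ are essential. This is precisely why Lemma \ref{lemma.multiplier} is formulated in terms of the completely bounded norms $\cb{\psi}$ and $\cb{\vphi}$.
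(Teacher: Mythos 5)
Your proposal correctly identifies the opening move (apply Lemma \ref{lemma.crucial} or \ref{lemma.spectralgap} to the word-length deformation $\m_\rho$, convert via \eqref{eq.some-estimate}, and obtain a sequence $s_k$ with the two stated properties), and it correctly anticipates that a Herz-Schur trimming plus a combinatorial orthogonality argument are the heart of the matter. However, there is a genuine gap in the way you try to close the argument.

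You assume $B \notembed{M} P$ and assert that the intertwining criterion gives unitaries $w \in \cU(B)$ which can then be trimmed by Lemma \ref{lemma.multiplier}.2 to elements $w'$ supported on $\{|g| \geq L\}$ with small $\|\cdot\|_2$-error. But the criterion $B \notembed{M} P$ only produces unitaries $w$ with $\|E_P(x w y^*)\|_2$ small for $x,y$ in a finite set of $M$, and that is \emph{not} the same as saying $w$ has small Fourier mass on words of length $\leq L$. The set $\{g : |g| \leq L\}$ is, in general, an infinite union of $\Sigma$-double cosets, so smallness of finitely many $\|E_P(u_g w u_h)\|_2$ gives no control on $\|\cP_L(w)\|_2$. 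What the long-word trimming of $w$ actually requires is smallness of $\tau(w^* \m_{\rho_0}(w))$, i.e.\ failure of \emph{uniform $\m_\rho$-almost-invariance} on $\cU(B)$, and that is not a consequence of $B \notembed{M} P$ alone.

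The paper resolves this by splitting the argument in two. First, Lemma \ref{lem.lengths} runs exactly the trimming-plus-combinatorics contradiction that you sketch, but starting from the correct contradiction hypothesis, namely that $\tau(w^* \m_{\rho_0}(w))$ is small for some $w \in \cU(B)$ and every $\rho_0$; this hypothesis directly yields a $w$ with small $\|\cP_{K_2}(w)\|_2$, which is what makes the long-word trimming legitimate. The conclusion of Lemma \ref{lem.lengths} is therefore not $B \embed{M} P$ but the uniform lower bound $\tau(w^* \m_{\rho_0}(w)) \geq \delta$ on $\cU(B)$. The second and genuinely missing ingredient in your proposal is Theorem \ref{thm.IPP} (the deformation/rigidity theorem extracted from \cite{IPP}): it converts the uniform $\m_{\rho_0}$-invariance of $B$ into the dichotomy $B \embed{M} P$ or $\cN_{pMp}(B)\dpr \embed{M} A \rtimes \Gamma_i$, and the latter is ruled out because $\cN_{pMp}(B)\dpr = pMp$ while $\Gamma_i$ has infinite index in $\Gamma$. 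Your write-up contains no substitute for this step, and the combinatorial argument alone cannot produce the intertwining; you would have to reprove the content of Theorem \ref{thm.IPP} (which relies on the automorphism deformation $\al_t$ of $\Mtil$, not merely on the multiplier $\m_\rho$).
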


We will prove Theorem \ref{thm.embedding} by combining the transfer of rigidity lemmas \ref{lemma.crucial}, \ref{lemma.spectralgap} with the following result saying that any abelian algebra that is normalized by \lq many\rq\ unitaries of short word length, is itself uniformly of short length.

As in Section \ref{sec.deformation-AFP}, define for every $0 < \rho < 1$, the unital completely positive map $\m_\rho$ on $M$ by $\m_\rho(a u_g) = \rho^{|g|} a u_g$ for all $a \in A$, $g \in \Gamma$.

\begin{lemma}\label{lem.lengths}
Let $p \in M$ be a projection and $B \subset pMp$ an abelian von Neumann subalgebra. Put $\eps = \tau(p)/2072$ and assume that we are given $0 < \rho < 1$ and a sequence of unitaries $v_k \in pMp$ that normalize $B$ and satisfy
\begin{itemize}
\item $\|v_k - \m_\rho(v_k)\|_2 \leq \eps/2$ for all $k$,
\item $\|E_P(x v_k y)\|_2 \recht 0$ for all $x,y \in M$.
\end{itemize}
Then, there exists a $0 < \rho_0 < 1$ and a $\delta > 0$ such that $\tau(w^* \m_{\rho_0}(w)) \geq \delta$ for all $w \in \cU(B)$.
\end{lemma}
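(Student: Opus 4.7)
The plan is by contradiction. Negating the conclusion, and using that $\tau(w^*\m_{\rho_0}(w)) = \sum_g \rho_0^{|g|} \|w_g\|_2^2$ for $w=\sum_g w_g u_g \in \cU(B)$, we obtain: for every $L \in \N$ and $\eta > 0$, there is $w \in \cU(B)$ with $\sum_{|g| \leq L} \|w_g\|_2^2 \leq \eta^2$, i.e.\ $w$ is Fourier-concentrated on words of length $> L$. Since $B$ is abelian and each $v_k$ normalizes $B$, the identity
\begin{equation*}
v_k w v_k^* w^* = w^* v_k w v_k^*
\end{equation*}
holds in $M$, with both sides unitaries of $pMp$, hence of $\|\cdot\|_2$-norm $\sqrt{\tau(p)}$. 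The strategy is to pick $w$ with very long support and, after trimming $v_k$, show that the two sides of the identity acquire almost disjoint ``first-letter'' Fourier supports, giving a lower bound on their $\|\cdot\|_2$-distance that beats the trimming error.

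The first step is to trim $v_k$. Choose $K \in \N$ with $\rho^{K+1} \leq 1/2$, apply Lemma \ref{lemma.multiplier}(1) to get $K' > K$ and $\psi \in \rB_2(\Gamma)$ with $\cb\psi \leq 2$, $\psi \equiv 1$ on $\{|g|\leq K\}$ and $\psi \equiv 0$ on $\{|g| \geq K'\}$, and set $v_k' := \m_\psi(v_k)$. Then $v_k'$ is Fourier-supported on $\{|g| < K'\}$ with $\|v_k'\|_\infty \leq 2$, and combining $\|v_k - \m_\rho(v_k)\|_2 \leq \eps/2$ with $1-\rho^{|g|} \geq 1/2$ for $|g| > K$ gives
\begin{equation*}
\|v_k - v_k'\|_2^2 \leq \sum_{|g|>K} \|(v_k)_g\|_2^2 \leq \eps^2.
\end{equation*}
Since $|\psi| \leq 1$, the hypothesis $\|E_P(u_r v_k u_s)\|_2 \to 0$ transfers to $(v_k')_k$ on monomials, so Lemma \ref{lemma.combinatorial} applies to $(v_k')_k$ with parameter $K' - 1$. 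A four-factor telescoping estimate using $\|w\|_\infty \leq 1$ and $\|v_k'\|_\infty \leq 2$ converts the commutation above into
\begin{equation*}
\|v_k' w (v_k')^* w^* - w^* v_k' w (v_k')^*\|_2 \leq 6 \eps
\end{equation*}
for every $w \in \cU(B)$ and every $k$.

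Now apply the contradiction hypothesis to obtain $w \in \cU(B)$ with $\|\cP_{|g|\leq 2K'}(w)\|_2 \leq \eta$ for a small $\eta$ to be fixed. Crucially, $\|w\|_\infty \leq 1$ and contractivity of $E_A$ yield the coefficientwise bound $\|w_g\|_\infty \leq 1$. Apply Lemma \ref{lemma.combinatorial} to $y_k = v_k'$: for each pair $h_1, h_2 \in \Gamma$ with $|h_1|, |h_2| > 2K'$, decompose $u_{h_1^{-1}} v_k' u_{h_2} = a_k(h_1, h_2) + b_k(h_1, h_2)$ with $a_k$ Fourier-supported in $W_{(h_{1,\mathrm{last}})^{-1},\, h_{2,\mathrm{last}}}$ and $\|b_k(h_1,h_2)\|_2 \to 0$. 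Approximating $w$ by a finite-Fourier-support truncation (cost $\leq \eta$ in $\|\cdot\|_2$, with bounded cb cost since the truncation set is fixed), then expanding $w^* v_k' w (v_k')^*$ as a Fourier multi-sum and inserting these decompositions, one concludes that $w^* v_k' w (v_k')^*$ is $\|\cdot\|_2$-concentrated on words whose first letter lies in a fixed finite set $F$ dictated by the (truncated) support of $w$. The same argument applied to $v_k' w (v_k')^* w^*$ shows that its first letter comes from $\mathrm{supp}(v_k')$, and the hypothesis $\|E_P(\cdot v_k \cdot)\|_2 \to 0$ (transferred to $v_k'$) forces, for any fixed letter $g_0$ and for each fixed $|g|<K'$, the Fourier mass of $v_k'$ on $g_0 \Sigma$-cosets of length $<K'$ to vanish, making the first letter of $v_k' w (v_k')^* w^*$ escape any fixed finite set of letters as $k \to \infty$.

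Let $\cQ$ be the orthogonal projection of $\rL^2(M)$ onto the closure of $\mathrm{span}\{au_g : a \in A,\, g \text{ has first letter in } F\}$. The preceding paragraph gives, for $k$ large and $\eta$ small, $\|\cQ(w^* v_k' w (v_k')^*)\|_2 \geq \sqrt{\tau(p)} - 3\eps - C\eta$ and $\|\cQ(v_k' w (v_k')^* w^*)\|_2 \leq C'\eta + o_k(1)$, for explicit constants $C, C'$. Hence
\begin{equation*}
\|v_k' w (v_k')^* w^* - w^* v_k' w (v_k')^*\|_2 \geq \sqrt{\tau(p)} - 3\eps - (C+C')\eta - o_k(1).
\end{equation*}
With $\eps = \tau(p)/2072$, $\tau(p) \leq 1$, and $\eta$ small (and $k$ large), the right-hand side strictly exceeds $6\eps$, contradicting the trimming estimate. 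The main obstacle in executing this plan is the Fourier-level bookkeeping in the third paragraph: expanding a four-fold product of Fourier series over the amalgamated free product, applying Lemma \ref{lemma.combinatorial} termwise, and approximating $w$ by a finite Fourier truncation whose cb cost does not blow up—all while relying on the operator-norm bound $\|w_g\|_\infty \leq 1$ to tame the cross-term estimates.
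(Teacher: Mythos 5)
Your plan starts out exactly as the paper does (apply the $\psi$-multiplier of Lemma \ref{lemma.multiplier}.1 to trim $v_k$ to an element $v_k'$ of operator norm $\leq 2$ and Fourier support in $\{|g|<K'\}$, and observe that $\|E_P(\cdot\,v_k'\,\cdot)\|_2\to0$ is preserved), and your commutator identity $v_kwv_k^*w^* = w^*v_kwv_k^*$ is a valid alternative to the paper's eight-fold identity $p = v_kwv_k^*\,w\,v_kw^*v_k^*\,w^*$. But there is a real gap in how you trim $w$.

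The paper does \emph{not} truncate $w$ to a fixed finite Fourier set. It first takes a Kaplansky approximant $w_0$ of $w$ in the dense $*$-algebra $\lspan\{au_g\}$ with $\|w_0\|\leq 1$, and only then applies the second Herz-Schur multiplier $\m_\vphi$ of Lemma \ref{lemma.multiplier}.2 (with the \emph{universal} bound $\cb{\vphi}\leq 3$), producing a finitely supported $x=\m_\vphi(w_0)$ with the three simultaneous properties $\|x\|\leq 3$, $\|w-x\|_2\leq\eps$, and $x\in\lspan\{au_g:|g|>2K_1\}$. The uniform operator-norm bound $\|x\|\leq 3$ is what makes the four-/eight-fold substitution estimate legitimate. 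In your proposal you instead invoke ``a finite-Fourier-support truncation\dots with bounded cb cost since the truncation set is fixed'' and the coefficientwise bound $\|w_g\|_\infty\leq 1$. Neither of these gives you what you need: the cb-norm of the Fourier truncation to a finite set $F$ depends on $F$ (hence on $w$, hence on $\eta$, hence on the very smallness you are trying to exploit — a circularity), and $\|w_g\|\leq 1$ coefficientwise does not bound $\|\sum_{g\in F}w_gu_g\|$ by anything better than $|F|$. Concretely, in your final estimate the constants $C,C'$ in the term $(C+C')\eta$ would grow with the truncation set and there is no reason $(C+C')\eta\to0$ as $\eta\to0$. The missing ingredient is precisely the second ``trimming'' multiplier $\m_\vphi$ applied to a Kaplansky approximant — without it the operator-norm bookkeeping does not close.

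There is also a secondary difference worth noting: once the trimming is done correctly, the paper closes the argument by the single trace computation $\tau(y_k\,x\,y_k^*\,x\,y_k\,x^*\,y_k^*\,x^*)\to 0$, using Lemma \ref{lemma.combinatorial} to decompose $u_{t'}y_ku_g$ etc.\ and observing that the unique term involving only the $a_k^{(i)}$-parts has zero trace because no simplification occurs between consecutive factors, while all other terms vanish with $\|b_k^{(i)}\|_2\to 0$. Your proposed ``almost disjoint first-letter Fourier supports'' comparison between $v_k'w(v_k')^*w^*$ and $w^*v_k'w(v_k')^*$ is more delicate than you indicate — in $v_k'w(v_k')^*w^*$, cancellation between the short prefix $v_k'$ and the long middle block means the surviving first letter may come from $w$ rather than from $v_k'$, so the claim that the first letter ``escapes any fixed finite set'' needs the full strength of Lemma \ref{lemma.combinatorial} applied in the right places and is not as direct as a first-letter dichotomy. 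The paper's trace argument sidesteps this subtlety entirely; you would need to supply it.
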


\begin{proof}
Throughout the proof we make use of the Herz-Schur multipliers provided by Lemma \ref{lemma.multiplier}. Whenever $\vphi \in \rB_2(\Gamma)$, we can extend $\m_\vphi$ to $A \rtimes \Gamma$, without increasing the cb-norm, by putting $\m_\vphi(au_g) = \vphi(g) a u_g$ for all $a \in A$ and $g \in \Gamma$.

Assume that the lemma is false.
Take $K \in \N$ such that $\rho^K \leq 1/2$. Denote as before by $\cP_K$ the orthogonal projection of $\rL^2(M)$ onto the closed linear span of $\{a u_g \mid a \in A, |g| \leq K\}$. Note that
$$\|v- \cP_K(v)\|_2 \leq 2 \|\m_\rho(v) - v \|_2$$
for all $v \in M$.

By Lemma \ref{lemma.multiplier}.1, take $K_1 > K$ and $\psi \in \rB_2(\Gamma)$ satisfying $\cb{\psi} \leq 2$, $\psi(g) = 1$ if $|g|\leq K$, $\psi(g) = 0$ if $|g| \geq K_1$ and $0 \leq \psi(g) \leq 1$ for all $g \in \Gamma$. Define $y_k := \m_\psi(v_{k})$. Since $$\|v_{k} - \cP_K(v_{k})\|_2 \leq 2 \|\m_\rho(v_{k}) - v_{k}\|_2 \leq \eps \; ,$$ we get $\|v_{k} - y_k\|_2 \leq \eps$.
So, the sequence $(y_k)$ satisfies
\begin{itemize}
\item $\|y_k\| \leq 2$ for all $k$,
\item $y_k = \cP_{K_1}(y_k)$ for all $k$,
\item $\|v_{k} - y_k \|_2 \leq \eps$.
\end{itemize}
We claim that also $\|E_P(x y_k z)\|_2 \recht 0$ for all $x,z \in M$. Since $(y_k)$ is a bounded sequence and since $A \subset P$, we may assume that $x = u_g$, $z = u_h$ for some $g,h \in \Gamma$. Denote by $\cP_0$ the orthogonal projection of $\rL^2(M)$ onto the closed linear span of $\{a u_r \mid a \in A, r \in g^{-1} \Sigma h^{-1} \}$. Then,
\begin{align*}
\|E_P(u_g y_k u_h)\|_2 &= \|\cP_0(y_k)\|_2 = \|\cP_0(\m_\psi(v_{k}))\|_2 \\ &= \|\m_\psi(\cP_0(v_{k}))\|_2 \leq \|\cP_0(v_{k})\|_2 \\ &= \|E_P(u_g v_{k} u_h)\|_2 \recht 0 \; .
\end{align*}
This proves the claim.

By Lemma \ref{lemma.multiplier}.2, take $K_2 > 2 K_1$ and $\vphi \in \rB_2(\Gamma)$ satisfying $\cb{\vphi} \leq 3$, $\vphi(g) = 0$ if $|g| \leq 2 K_1$, $\vphi(g) = 1$ if $|g| \geq K_2$ and $0 \leq \vphi(g) \leq 1$ for all $g \in \Gamma$.

Take $0 < \rho_0 < 1$ close enough to $1$ such that $\rho_0^{K_2} \geq 1/2$. By our assumption by contradiction, take a unitary $w \in \cU(B)$ such that
$$\tau(w^* \m_{\rho_0}(w)) \leq \frac{\eps^2}{8} \; .$$
It follows that $\|\cP_{K_2}(w)\|_2 \leq \eps/2$. By the Kaplansky density theorem, take
$w_0$ in the dense $*$-subalgebra
$$M_0 = \lspan \{ a u_g \mid a \in A, g \in \Gamma \}$$
with $\|w_0\| \leq 1$ and $\|w - w_0\|_2 \leq \eps/2$. Put $x = \m_\vphi(w_0)$. Then,
$$\|\m_\vphi(w) - x \|_2 = \|\m_\vphi(w - w_0)\|_2 \leq \|w-w_0\|_2 \leq \frac{\eps}{2} \; .$$
Since $\|\cP_{K_2}(w)\|_2 \leq \eps/2$, also $\|w - \m_\vphi(w)\|_2 \leq \eps/2$. So, our element $x \in M$ has the following properties.
\begin{itemize}
\item $x \in \lspan\{a u_g \mid a \in A, |g| > 2 K_1 \}$,
\item $\|x\| \leq 3$,
\item $\|w - x \|_2 \leq \eps$.
\end{itemize}

Since $B$ is abelian, $w \in \cU(B)$ and $v_{k}$ normalizes $B$, we have
$$p = v_{k} w v_{k}^* \; w \; v_{k} w^* v_{k}^* \; w^*$$
for all $k$. We now replace $v_{k}$ by $y_k$ and $w$ by $x$. We use the estimates for $\|x\|$, $\|y_k\|$, $\|v_{k} - y_k\|_2$ and $\|w-x\|_2$, to conclude that
\begin{align*}
\|p - & y_k  \; x  \; y_k^* \;  x  \; y_k  \; x^*  \; y_k^* \;  x^* \|_2 \\ & \leq \eps (1 + 3 + 2 \cdot 3 + 3 \cdot 2 \cdot 3 + 2 \cdot 3 \cdot 2 \cdot 3 + 3 \cdot 2 \cdot 3 \cdot 2 \cdot 3 + 2 \cdot 3 \cdot 2 \cdot 3 \cdot 2 \cdot 3 + 3 \cdot 2 \cdot 3 \cdot 2 \cdot 3 \cdot 2 \cdot 3) \\ &= 1036 \eps = \frac{1}{2} \tau(p) \; .
\end{align*}
It follows that for all $k$,
$$|\tau(y_k  \; x  \; y_k^* \;  x  \; y_k  \; x^*  \; y_k^* \;  x^*)| \geq \frac{1}{2}\tau(p) \; .$$
We claim however that the left-hand side tends to $0$ when $k \recht \infty$.

Since $x \in \lspan\{a u_g \mid a \in A, |g| > 2 K_1 \}$, it suffices to prove that
\begin{equation}\label{eq.affreux}
\tau(y_k \; u_g a u_{g'}  \; y_k^*  \; u_h b u_{h'}  \; y_k  \; u_r c u_{r'}  \;  y_k^*  \; u_t d u_{t'}) \recht 0
\end{equation}
for all $a,b,c,d \in A$ and for all group elements $g,g',h,h',k,k',r,r',t,t'$ having length at least $K_1$ and being chosen such that the concatenations $gg'$, $hh'$, $rr'$ and $tt'$ are reduced.

We now study the expressions
$$u_{t'} y_k u_g \;\; , \;\;  u_{g'} y_k^* u_h   \;\; , \;\;   u_{h'} y_k u_r  \;\; \text{and} \;\; u_{r'} y_k^* u_t \; .$$
According to Lemma \ref{lemma.combinatorial}, every of these four expressions can be written as a sum $a^{(i)}_k + b^{(i)}_k$, $i=1,2,3,4$, of bounded sequences satisfying the following properties: $\|b^{(i)}_k\|_2 \recht 0$ and we have $a^{(i)}_k = \cP_{g_0,h_0}(a^{(i)}_k)$ where $g_0$ is the first letter of resp.\ $t',g',h',r'$ and $h_0$ is the last letter of resp.\ $g,h,r,t$.

Then, the left hand side of \eqref{eq.affreux} equals
$$\tau\bigl( \, (a^{(1)}_k + b^{(1)}_k) \, a \, (a^{(2)}_k + b^{(2)}_k) \, b \, (a^{(3)}_k + b^{(3)}_k) \, c \, (a^{(4)}_k + b^{(4)}_k) \, d \bigr) \; .$$
Developing all the sums, the one term that only involves $a^{(i)}_k$ equals zero because there is no simplification between consecutive factors of the product and all the other terms tend to zero because $\|b^{(i)}_k\|_2 \recht 0$ and the $(a^{(i)}_k)$ are bounded.

We have reached a contradiction and hence we have proven the lemma.
\end{proof}

It is now easy to prove Theorem \ref{thm.embedding}.

\begin{proof}[Proof of Theorem \ref{thm.embedding}]
Denote by $(v_s)_{s \in \Lambda}$ the canonical unitaries in $B \rtimes \Lambda$.
Put $\eps = \tau(p)/2072$. In the situation where $\Gamma_1$ has a non-amenable subgroup $H$ with the relative property (T), we apply Lemma \ref{lemma.crucial} to the completely positive maps $\m_\rho$, $\rho \recht 1$ and the von Neumann subalgebra $M_0 = \rL(H)$. In the situation where $\Gamma_1$ has two non-amenable commuting subgroups $H_1,H_2$, we apply Lemma \ref{lemma.spectralgap} to the deformation $\al_t$, $t \recht 0$, introduced in Section \ref{sec.deformation-AFP} and the commuting subalgebras $\rL(H_1)$, $\rL(H_2)$. Combined with \eqref{eq.some-estimate}, we always find $0 < \rho < 1$ and a sequence $s_k \in \Lambda$ such that
\begin{itemize}
\item $\|v_{s_k} - \m_\rho(v_{s_k})\|_2 \leq \eps/2$ for all $k$.
\item $\|E_P(x v_{s_k} y)\|_2 \recht 0$ for all $x,y \in M$.
\end{itemize}
By Lemma \ref{lem.lengths} we get a $0 < \rho < 1$ and a $\delta > 0$ such that $\tau(w^* \m_\rho(w)) \geq \delta$ for all $w \in \cU(B)$. By Theorem \ref{thm.IPP} we have that $B \embed{M} P$ or that $\cN_{pMp}(B)\dpr \embed{M} A \rtimes \Gamma_i$ for some $i=1,2$. Since $\cN_{pMp}(B)\dpr = pMp$ and since $\Gamma_i < \Gamma$ has infinite index, the second option is impossible, concluding the proof of the theorem.
\end{proof}

\subsection{Proof of Theorems \ref{thm.uniqueCartan} and \ref{thm.no-Cartan}}

\begin{proof}[Proof of Theorem \ref{thm.uniqueCartan}]
Put $A = \M_n(\C) \ot \rL^\infty(X)$ with $\Gamma \actson A$ acting trivially on $\M_n(\C)$. In the proof, we do not write the isomorphism $\pi$ and we put $B = \rL^\infty(Y)$. So, $A \rtimes \Gamma = \M_n(\C) \ot M$ and $p$ is a projection in $A \rtimes \Gamma$ such that $p(A \rtimes \Gamma)p = B \rtimes \Lambda$. By Theorem \ref{thm.embedding}, we get an intertwining bimodule between $B$ and $A \rtimes \Sigma$. So, we also have
$$B \embed{M} \rL^\infty(X) \rtimes \Sigma \; .$$
By condition 3 in Definition \ref{def.G} and \cite[Theorem 6.16]{PV1}, it follows that $B \embed{M} \rL^\infty(X)$. Then, \cite[Theorem A.1]{PBetti} (see also \cite[Theorem C.3]{VBour}) provides the conclusion of the theorem.
\end{proof}

\begin{proof}[Proof of Theorem \ref{thm.no-Cartan}]
Since $\Gamma$ acts ergodically on $(X,\mu)$ and $\Gamma$ is ICC, it
follows that $M$ is a factor. Assume that $B$ is a group measure
space Cartan subalgebra in $pM^n p$. Repeating the previous proof,
it follows that $B \embed{M} \rL^\infty(X)$. By \cite[Theorem
4.11]{OP1}, it follows that $\Gamma \actson (X,\mu)$ is free.
\end{proof}

\subsection{II$_1$ factors with at least two group measure space Cartan subalgebras}\label{subsec.twoCartan}

By definition, the family $\cG$ consists of amalgamated free products $\Gamma = \Gamma_1 *_\Sigma \Gamma_2$, where $\Gamma_1$ satisfies a rigidity condition (\ref{def.G}.1), where $\Sigma$ is amenable and different from $\Gamma_2$ (\ref{def.G}.2) and where $\bigcap_{i=1}^k g_i \Sigma g_i^{-1}$ is finite for some $g_1,\ldots,g_k \in \Gamma$ (\ref{def.G}.3). Without this last condition, it is possible to give examples of group actions such that $M = \rL^\infty(X) \rtimes \Gamma$ admits at least two Cartan subalgebras that are non-conjugate by an automorphism of $M$.

Indeed, Connes and Jones \cite{CJ} provide examples of free ergodic p.m.p.\ actions $\Gamma \times \Sigma \actson (X,\mu)$ such that the II$_1$ factor $\rL^\infty(X) \rtimes (\Gamma \times \Sigma)$ has at least two group measure space Cartan subalgebras that are non-conjugate by an automorphism. In their construction, $\Gamma$ can be any non-amenable group and $\Sigma$ is a specific infinite amenable group. In particular, one can consider $$(\Gamma_1 * \Gamma_2) \times \Sigma = (\Gamma_1 \times \Sigma) *_\Sigma (\Gamma_2 \times \Sigma)$$
and provide examples where conditions \ref{def.G}.1 and \ref{def.G}.2 are satisfied, but condition \ref{def.G}.3 is not.

Mimicking \cite[Section 7]{OP2}, we give other examples of II$_1$ factors with at least two group measure space decompositions. The following general non-uniqueness statement is a consequence of Example \ref{ex.nonunique}. Whenever $\Gamma = H \rtimes G$ is a semi-direct product group with $H$ being infinite abelian, then $\Gamma$ admits free ergodic p.m.p.\ actions such that the corresponding group measure space II$_1$ factor has at least two non unitarily conjugate group measure space Cartan subalgebras. Whenever $G = G_1 *_\Sigma G_2$, also $\Gamma = (H \rtimes G_1) *_{H \rtimes \Sigma} (H \rtimes G_2)$. As such, we get again examples where conditions \ref{def.G}.1 and \ref{def.G}.2 are satisfied, but condition \ref{def.G}.3 is not.

\begin{example}\label{ex.nonunique}
Let $H$ be an infinite abelian group and $G \actson^\al H$ an action by automorphisms. Let $H \hookrightarrow K$ be a dense embedding of $H$ into the compact abelian group $K$. Assume that $G \actson H$ extends to an action by homeomorphisms of $K$ that we still denote by $\al$. Whenever $G \actson (X,\mu)$ is a free ergodic p.m.p.\ action, consider the free ergodic p.m.p.\ action
$$H \rtimes G \actson K \times X \quad\text{given by}\quad \begin{cases} h \cdot (k,x) = (h + k,x) \\ g \cdot (k,x) = (\al_g(k),g \cdot x)\end{cases} \quad\text{for all}\;\; h \in H, g \in G, k \in K, x \in X \; .$$
Dualizing the embedding $H \hookrightarrow K$, we get the embedding $\widehat{K} \hookrightarrow \widehat{H}$ and the action of $G$ by automorphisms of $\widehat{K}$ and $\widehat{H}$. We canonically have
$$\rL^\infty(K \times X) \rtimes (H \rtimes G) = \rL^\infty(\widehat{H} \times X) \rtimes (\widehat{K} \rtimes G) \; .$$
First of all, the group measure space Cartan subalgebras $\rL^\infty(K \times X)$ and $\rL^\infty(\widehat{H} \times X)$ are never \emph{unitarily conjugate.} Indeed, if $h_n \in H$ is a sequence tending to infinity in $H$, the unitaries $u_{h_n} \in \rL(H)$ satisfy $\|E_{\rL^\infty(K \times X)}(a u_{h_n} b)\|_2 \recht 0$ for all $a,b \in \rL^\infty(K \times X) \rtimes (H \rtimes G)$. It follows that $\rL^\infty(\widehat{H}) = \rL(H) \not\prec \rL^\infty(K \times X)$. A fortiori, $\rL^\infty(\widehat{H} \times X)$ cannot be unitarily conjugated onto $\rL^\infty(K \times X)$.

In certain examples, the Cartan subalgebras $\rL^\infty(K \times X)$ and $\rL^\infty(\widehat{H} \times X)$ are \emph{conjugate by an automorphism.} This is, for instance, always the case when the action $G \actson H$ is trivial. Then, the crossed product II$_1$ factor is the tensor product of $\rL^\infty(X) \rtimes G$ and $\rL^\infty(K) \times H = \rL^\infty(\widehat{H}) \rtimes \widehat{K}$. The second tensor factor is the hyperfinite II$_1$ factor and hence, the Cartan subalgebras $\rL^\infty(K)$ and $\rL^\infty(\widehat{H})$ are conjugate by an automorphism \cite{OW,CFW}.

In other examples, the Cartan subalgebras $\rL^\infty(K \times X)$ and $\rL^\infty(\widehat{H} \times X)$ are \emph{non conjugate by an automorphism.} Consider $G = \SL(n,\Z) \actson H = \Z^n$ and $\Z^n \hookrightarrow K = \Z_p^n$, where $\Z_p$ denotes the ring of $p$-adic integers for some prime number $p$. It follows that $\widehat{K} \rtimes G$ is the direct limit of a sequence of groups that are virtually isomorphic with $\SL(n,\Z)$.
\begin{itemize}
\item For $n = 2$, $H \rtimes G$ does not have the Haagerup property (because $H$ is an infinite subgroup with the relative property (T)), while $\widehat{K} \rtimes G$ has the Haagerup property (as the direct limit of groups with the Haagerup property).
\item For $n = 3$, $H \rtimes G$ has property (T), while $\widehat{K} \rtimes G$ does not have property (T) (as the direct limit of a strictly increasing sequence of groups).
\end{itemize}
Since both property (T) \cite[Corollary 1.4]{Fu1} and the Haagerup property
\cite[Remark 3.5.6$^\circ$]{PBetti} are measure equivalence invariants,
it follows that for $n = 2,3$, the group actions $(H \rtimes G) \actson (K \times X)$
and $(\widehat{K} \rtimes G) \actson (\widehat{H} \times X)$ are not stably orbit equivalent.
Hence, the corresponding group measure space Cartan subalgebras are not conjugate by an automorphism either.
\end{example}

\subsection{Amalgamated free products over groups with the Haagerup property}  \label{subsec.Haagerup}

We mention that the result in Theorem \ref{thm.uniqueCartan} also holds for certain amalgamated free products $\Gamma = \Gamma_1 *_\Sigma \Gamma_2$ over \emph{non-amenable groups $\Sigma$ with the Haagerup property,} once the free ergodic p.m.p.\ action $\Gamma \actson (X,\mu)$ is such that $\rL^\infty(X) \rtimes \Sigma$ still has the Haagerup property. This latter condition is not automatic, but holds for plain Bernoulli actions by \cite[Theorem 1.1]{CSV}. More precisely, apart from the Haagerup property of $\rL^\infty(X) \rtimes \Sigma$, one has to assume that $\Gamma = \Gamma_1 *_\Sigma \Gamma_2$ is such that $\Gamma_1$ admits an infinite subgroup with property (T), that $\Sigma$ has the Haagerup property, that $\Sigma \neq \Gamma_2$ and that the \lq malnormality\rq\ condition \ref{def.G}.3 holds.

In order to prove such a statement, it suffices to observe that Lemma \ref{lemma.crucial} still holds when $M_0$ has property (T) and $P$ has the Haagerup property. In the proof of Theorem \ref{thm.uniqueCartan}, Lemma \ref{lemma.crucial} is applied to $P = \rL^\infty(X) \rtimes \Sigma$.

\section{Stable W$^*$-superrigidity theorems} \label{sec.superrigidity}

\subsection{W$^*$-superrigidity}

Let $\Gamma \actson (X,\mu)$ be a p.m.p.\ action. Denote $A = \rL^\infty(X)$
and denote by $(\si_g)_{g \in \Gamma}$ the corresponding group of automorphisms of $A$.
We denote by $\rZ^1(\Gamma \actson X)$ the abelian group of scalar $1$-cocycles for the
action $\Gamma \actson X$, i.e.\ the group of functions $\om : \Gamma \recht \cU(A) :
g \mapsto \om_g$ satisfying $\om_{gh} = \om_g \si_g(\om_h)$ for all $g,h \in \Gamma$.

Let $\Delta : X \recht Y$ be a conjugacy between the free ergodic p.m.p.\ actions
$\Gamma \actson X$ and $\Lambda \actson Y$, with corresponding group isomorphism
$\delta : \Gamma \recht \Lambda$. Define the isomorphism
$\Delta_* : \rL^\infty(X) \recht \rL^\infty(Y) : \Delta(a) = a \circ \Delta^{-1}$.
Whenever $\om \in \rZ^1(\Gamma \actson A)$, we get an isomorphism
\begin{equation}\label{eq.form}
\theta : \rL^\infty(X) \rtimes \Gamma \recht \rL^\infty(Y) \rtimes \Lambda : \theta(a u_g) =
\Delta_*(a \om_g) \, u_{\delta(g)} \quad\text{for all}\;\; a \in \rL^\infty(X) \; , \;
g \in \Gamma \; .
\end{equation}

\begin{definition}\label{def.W-superrigid}
We call a free ergodic p.m.p.\ action $\Gamma \actson (X,\mu)$ \emph{W$^*$-superrigid}
if the following property holds. If $\Lambda \actson (Y,\eta)$ is a free ergodic p.m.p.\
action and $\pi : \rL^\infty(X) \rtimes \Gamma \recht \rL^\infty(Y) \rtimes \Lambda$ is an
isomorphism, then the groups $\Gamma$ and $\Lambda$ are isomorphic, their actions
$\Gamma \actson X$, $\Lambda \actson Y$ are conjugate and, up to a unitary conjugacy,
$\pi$ is of the form \eqref{eq.form}.
\end{definition}

Actually, the notion of \emph{stable W$^*$-superrigidity} is more
natural (see Definition \ref{def.stable-W-superrigid} below),
allowing in the correct way for amplifications. Indeed, in order for
$\Gamma \actson (X,\mu)$ to be W$^*$-superrigid in the above sense,
$\Gamma$ should not have finite normal subgroups, which is a
somewhat restrictive assumption.

We start with the following more precise version of Theorem \ref{thm.kida-intro}.

\begin{theorem}\label{thm.kida}
Let $n \geq 3$ and denote by $T_n$ the subgroup of upper triangular matrices in
$\PSL(n,\Z)$. Put $\Gamma = \PSL(n,\Z) *_{T_n} \PSL(n,\Z)$. Then, every free
ergodic p.m.p.\ action $\Gamma \actson (X,\mu)$ with the property that all finite
index subgroups of $T_n$ act ergodically on $(X,\mu)$, is W$^*$-superrigid.
In particular, all free p.m.p.\ mixing actions of $\Gamma$ are W$^*$-superrigid.
\end{theorem}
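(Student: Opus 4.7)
The plan is to deduce Theorem \ref{thm.kida} by combining the Cartan uniqueness result of Theorem \ref{thm.uniqueCartan} with Kida's orbit equivalence superrigidity theorem for $\Gamma = \PSL(n,\Z)*_{T_n}\PSL(n,\Z)$ from \cite{kida-amal}. Once both are in place, the argument is almost formal. The first task is to check that $\Gamma$ lies in the family $\cG$ of Definition \ref{def.G}. For $n \geq 3$, Kazhdan's theorem gives that $\PSL(n,\Z)$ has property (T), so $\Gamma_1 = \PSL(n,\Z)$ is itself a non-amenable subgroup with relative property (T), verifying condition \ref{def.G}.1. The amalgamated subgroup $T_n$ is solvable hence amenable, and is strictly contained in $\PSL(n,\Z)$, so condition \ref{def.G}.2 holds. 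For the malnormality-type condition \ref{def.G}.3, let $w \in \PSL(n,\Z)$ be the class of the anti-diagonal permutation matrix, signed to have determinant $1$; then $wT_nw^{-1}$ is the subgroup of lower triangular matrices, so $T_n \cap wT_nw^{-1}$ is the diagonal subgroup, which is finite in $\PSL(n,\Z)$. Hence $\Gamma \in \cG$.

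Next, let $\pi : \rL^\infty(X) \rtimes \Gamma \recht \rL^\infty(Y) \rtimes \Lambda$ be any isomorphism onto a group measure space factor arising from a free ergodic p.m.p.\ action $\Lambda \actson (Y,\eta)$. Applying Theorem \ref{thm.uniqueCartan} to $\pi^{-1}$ (with trivial amplification) produces a unitary $u$ with $\pi^{-1}(\rL^\infty(Y)) = u^*\rL^\infty(X) u$. Replacing $\pi$ by $\Ad(u) \circ \pi$, we may assume $\pi(\rL^\infty(X)) = \rL^\infty(Y)$. By the classical Singer correspondence \cite{Si}, this forces the actions $\Gamma \actson X$ and $\Lambda \actson Y$ to be orbit equivalent.

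Now $T_n$ is infinite, so finite-index subgroups of $T_n$ are also infinite and in particular act ergodically on any mixing action of $\Gamma$; thus Kida's ergodicity hypothesis holds in both formulations of Theorem \ref{thm.kida}. Kida's OE-superrigidity result \cite{kida-amal} then asserts that the orbit equivalence is actually implemented by a conjugacy: there exist an isomorphism $\Delta : (X,\mu) \recht (Y,\eta)$ and a group isomorphism $\delta : \Gamma \recht \Lambda$ with $\Delta(g \cdot x) = \delta(g) \cdot \Delta(x)$. Composing $\pi$ with the inverse of the canonical isomorphism \eqref{eq.form} associated to $(\Delta,\delta,1)$, we may further assume that $\pi$ restricts to $\Delta_*$ on $\rL^\infty(X)$ and sends each $u_g$ into $\rL^\infty(Y)\, u_{\delta(g)}$; the Fourier coefficient $\om_g \in \cU(\rL^\infty(X))$ defines a map $\om : \Gamma \recht \cU(\rL^\infty(X))$ which, by the $*$-homomorphism property of $\pi$, satisfies the $1$-cocycle identity and places $\pi$ in the form \eqref{eq.form} required by Definition \ref{def.W-superrigid}.

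The substantive work is entirely absorbed by Theorem \ref{thm.uniqueCartan} (proved in Section \ref{sec.unique-gms-cartan} via the transfer-of-rigidity lemmas of Section \ref{sec.prelim}) and by \cite{kida-amal}. I expect the only places requiring care in the assembly are the verification that the ergodicity hypothesis on finite-index subgroups of $T_n$ matches precisely the input demanded by Kida, and the routine but delicate reduction from ``$\pi$ maps $\rL^\infty(X)$ onto $\rL^\infty(Y)$ and induces the conjugacy $(\Delta,\delta)$'' to ``$\pi$ has the form \eqref{eq.form}'' via the scalar $1$-cocycle $\om$; there is no further analytic difficulty.
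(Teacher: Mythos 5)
Your proposal is correct and follows essentially the same route as the paper: verify $\Gamma \in \cG$ (the paper leaves this implicit but your computation with the anti-diagonal permutation matrix is the standard way to check condition \ref{def.G}.3), invoke Theorem \ref{thm.uniqueCartan} to unitarily conjugate the unknown group measure space Cartan subalgebra onto $\rL^\infty(X)$, pass to orbit equivalence via Singer's correspondence, and finish with Kida's OE superrigidity theorem from \cite{kida-amal}. The paper's own proof is even terser on the final step (it simply cites Kida); the small additional care you flag about upgrading ``$\pi$ preserves the Cartan and the actions are conjugate'' to ``$\pi$ has the form \eqref{eq.form}'' is handled by composing with a further normalizing unitary in $\rL^\infty(Y) \rtimes \Lambda$ (coming from the full group element that untwists the orbit equivalence into the conjugacy), after which your Fourier-coefficient argument produces the required $1$-cocycle.
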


\begin{proof}
Take a free p.m.p.\ action $\Gamma \actson (X,\mu)$ such that the restriction to
every finite index subgroup of $T_n$ is ergodic. Assume that $\rL^\infty(X) \rtimes \Gamma
= \rL^\infty(Y) \rtimes \Lambda$ for some free ergodic p.m.p.\ action $\Lambda \actson (Y,\eta)$.
By Theorem \ref{thm.uniqueCartan}, we can unitarily conjugate $\rL^\infty(Y)$ onto $\rL^\infty(X)$.
Hence, $\Gamma \actson (X,\mu)$ and $\Lambda \actson (Y,\eta)$ are orbit equivalent. But then,
\cite[Theorem 1.4]{kida-amal} yields the conclusion of the theorem.
\end{proof}

\subsection{Stable W$^*$-superrigidity}

In order to introduce the more natural notion of \emph{stable W$^*$-superrigidity,}
we first discuss stable isomorphism of II$_1$ factors. If $M$ is a II$_1$ factor
and $t > 0$, the amplification $M^t$ is defined as $M^t := p(\M_n(\C) \ot M)p$, where $p \in \M_n(\C) \ot M$ is a projection satisfying $(\Tr \ot \tau)(p) = t$. Note that $M^t$ is uniquely defined up to unitary conjugacy.

\begin{definition}\label{def.W-equivalence}
Let $M$ and $N$ be II$_1$ factors. A \emph{stable isomorphism} between $M$ and $N$ is an isomorphism $\pi : N \recht M^t$ for some $t > 0$.

Given the isomorphism $\pi : N \recht M^t$ and a projection $p \in \M_n(\C) \ot M$ with $(\Tr \ot \tau)(p) = t$, define the Hilbert space $\cH^\pi := \bigl(\M_{1,n}(\C) \ot \rL^2(M)\bigr)p$. The formula $a \cdot \xi \cdot b := a \xi \pi(b)$ turns $\cH^\pi$ into an $M$-$N$-bimodule such that the right $N$-action equals the commutant of the left $M$-action and vice versa.

We equivalently define a \emph{stable isomorphism} between $M$ and $N$ as being an $M$-$N$-bimodule $\bim{M}{\cH}{N}$ such that the right $N$-action equals the commutant of the left $M$-action (and, equivalently, vice versa). Every stable isomorphism $\bim{M}{\cH}{N}$ is unitarily equivalent with $\bim{M}{\cH^\pi}{N}$ for an isomorphism $\pi : N \recht M^t$ that is uniquely determined up to unitary conjugacy.

We call the number $t > 0$ the \emph{compression constant} of the stable isomorphism $\bim{M}{\cH^\pi}{N}$.
\end{definition}

Stable isomorphisms can be composed using the Connes tensor product. Of course, we have $$\cH^\pi \ot_N \cH^\eta \cong \cH^{(\id \ot \pi)\eta} \; .$$

Fix free ergodic p.m.p.\ actions $\Gamma \actson (X,\mu)$ and $\Lambda \actson (Y,\eta)$. Stabilizing the notions of W$^*$-equivalence, orbit equivalence and conjugacy (see the beginning of Section \ref{sec.prelim}), we introduce the following terminology.

First recall that a free p.m.p.\ action $\Gamma \actson (X,\mu)$ is said to be \emph{induced} from $\Gamma_0 \actson X_0$ if $\Gamma_0 < \Gamma$ is a finite index subgroup, $X_0 \subset X$ is a non-negligible $\Gamma_0$-invariant subset and, up to measure zero, the sets $g \cdot X_0$, $g \in \Gamma/\Gamma_0$, form a partition of $X$.

\begin{itemize}
\item We call \emph{stable W$^*$-equivalence} between the actions $\Gamma \actson (X,\mu)$ and $\Lambda \actson (Y,\eta)$, any stable isomorphism between $\rL^\infty(X) \rtimes \Gamma$ and $\rL^\infty(Y) \rtimes \Lambda$.

\item We call \emph{stable orbit equivalence} between the actions $\Gamma \actson (X,\mu)$ and $\Lambda \actson (Y,\eta)$, any stable isomorphism between the orbit equivalence relations $\cR(\Gamma \actson X)$ and $\cR(\Lambda \actson Y)$, i.e.\ any isomorphism $\Delta : X_1 \recht Y_1$ between non-negligible subsets $X_1 \subset X$, $Y_1 \subset Y$ satisfying
    $$\Delta(X_1 \cap \Gamma \cdot x) = Y_1 \cap \Lambda \cdot \Delta(x)$$
    for a.e.\ $x \in X_1$. We call $\mu(X_1)/\eta(Y_1)$ the compression constant of the stable orbit equivalence.

\item We call \emph{stable conjugacy} between the actions $\Gamma \actson (X,\mu)$ and $\Lambda \actson (Y,\eta)$, any conjugacy between the actions $\frac{\Gamma_0}{G} \actson \frac{X_0}{G}$ and $\frac{\Lambda_0}{H} \actson \frac{Y_0}{H}$ where $\Gamma \actson X$, $\Lambda \actson Y$ are induced from $\Gamma_0 \actson X_0$, $\Lambda_0 \actson Y_0$ and where $G \lhd \Gamma_0$, $H \lhd \Lambda_0$ are finite normal subgroups. The number $\frac{|H| \, [\Lambda : \Lambda_0]}{|G| \, [\Gamma : \Gamma_0]}$ is called the compression constant of the stable conjugacy.
\end{itemize}

{\it From stable conjugacy to stable orbit equivalence.} If $\Gamma \actson (X,\mu)$ is induced from $\Gamma_0 \actson X_0$ and if $G \lhd \Gamma_0$ is a finite normal subgroup, the canonical stable orbit equivalence between $\Gamma \actson (X,\mu)$ and $\frac{\Gamma_0}{G} \actson \frac{X_0}{G}$ is defined by taking a fundamental domain $X_1 \subset X_0$ for the action $G \actson X_0$ and restricting the quotient map $X_0 \recht \frac{X_0}{G}$ to $X_1$. The compression constant is $(|G| \, [\Gamma : \Gamma_0])^{-1}$. The stable orbit equivalence associated with a stable conjugacy between $\Gamma \actson (X,\mu)$ and $\Lambda \actson (Y,\eta)$ is defined as the composition of the canonical stable orbit equivalences with $\frac{\Gamma_0}{G} \actson \frac{X_0}{G}$, resp.\ $\frac{\Lambda_0}{H} \actson \frac{Y_0}{H}$, together with the conjugacy between both actions.

{\it From stable orbit equivalence to stable W$^*$-equivalence.} Let $\Delta : X_1 \recht Y_1$ be a stable orbit equivalence and denote by $p \in \rL^\infty(X)$, $q \in \rL^\infty(Y)$ the projections with support $X_1,Y_1$. Then $\Delta$ gives rise to a canonical isomorphism $\pi_\Delta :  q(\rL^\infty(Y) \rtimes \Lambda)q \recht p(\rL^\infty(X) \rtimes \Gamma)p$ satisfying $\pi_\Delta(b) = b \circ \Delta$ for all $b \in \rL^\infty(Y_1)$. The isomorphism $\pi_\Delta$ amplifies to a stable W$^*$-equivalence $\rL^\infty(Y) \rtimes \Lambda \recht (\rL^\infty(X) \rtimes \Gamma)^t$, with $t = \mu(X_1)/\eta(Y_1)$, that we still denote by $\pi_\Delta$.

\begin{definition} \label{def.stable-W-superrigid}
A free ergodic p.m.p.\ action $\Gamma \actson (X,\mu)$ is said to be \emph{stably W$^*$-superrigid} if the following holds. Whenever $\pi$ is a stable W$^*$-equivalence between $\Gamma \actson (X,\mu)$ and an arbitrary free ergodic p.m.p.\ action $\Lambda \actson (Y,\eta)$, it follows that the actions are stably conjugate and that $\pi$ equals the composition of
\begin{itemize}
\item the canonical stable W$^*$-equivalence given by the stable conjugacy,
\item the automorphism of $\rL^\infty(X) \rtimes \Gamma$ given by an element of $\rZ^1(\Gamma \actson X)$,
\item an inner automorphism of $\rL^\infty(X) \rtimes \Gamma$.
\end{itemize}
\end{definition}

Let $\Gamma \actson (X,\mu)$ be stably W$^*$-superrigid. If moreover $\Gamma$ has no finite normal subgroups and if finite index subgroups of $\Gamma$ still act ergodically on $(X,\mu)$, then $\Gamma \actson (X,\mu)$ is W$^*$-superrigid in the sense of Section \ref{sec.intro}.

For certain families of group actions $\Gamma \actson (X,\mu)$, all $1$-cocycles with values in $S^1$ are known to be cohomologous to a group morphism and then we may assume that the corresponding automorphism of $\rL^\infty(X) \rtimes \Gamma$ is implemented by a character $\Gamma \recht S^1$.

{\it From stable W$^*$-equivalence to stable orbit equivalence.}
Let $\Gamma \actson (X,\mu)$ be a free ergodic p.m.p.\ action satisfying the conclusion of Theorem \ref{thm.uniqueCartan}. Whenever $\pi$ is a stable W$^*$-equivalence between $\Gamma \actson (X,\mu)$ and an arbitrary free ergodic p.m.p.\ action $\Lambda \actson (Y,\eta)$, we then find a stable orbit equivalence $\Delta$ between $\Gamma \actson X$ and $\Lambda \actson Y$ such that $\pi$ equals the composition of $\pi_\Delta$, the automorphism of $\rL^\infty(X) \rtimes \Gamma$ given by an element of $\rZ^1(\Gamma \actson X)$ and an inner automorphism.

{\it From stable orbit equivalence to stable conjugacy.}
Let $\Gamma \actson (X,\mu)$ be a free ergodic p.m.p.\ action that is cocycle superrigid with arbitrary countable target groups (see paragraph \ref{subsec.cocycle} for terminology). Whenever $\Delta$ is a stable orbit equivalence between $\Gamma \actson (X,\mu)$ and an arbitrary free ergodic p.m.p.\ action $\Lambda \actson (Y,\eta)$, it follows (see \cite[Proposition 5.11]{P0}, \cite[Lemma 4.7]{VBour}) that the actions are stably conjugate and that $\Delta$ equals the composition of the canonical stable orbit equivalence given by the stable conjugacy and an inner automorphism, i.e.\ an automorphism $\Delta_0$ of $(X,\mu)$ satisfying $\Delta_0(x) \in \Gamma \cdot x$ for a.e.\ $x \in X$. Even more so, it actually follows that there exists a finite normal subgroup $G \lhd \Gamma$ and that $\Lambda \actson Y$ is induced from $\Lambda_0 \actson Y_0$ such that the actions $\frac{\Gamma}{G} \actson \frac{X}{G}$ and $\Lambda_0 \actson Y_0$ are conjugate.

Summarizing the previous two paragraphs, we have proven the following lemma.

\begin{lemma}\label{lemma.assemble}
Let $\Gamma \actson (X,\mu)$ be a free ergodic p.m.p.\ action that satisfies the conclusion of Theorem \ref{thm.uniqueCartan} and that is cocycle superrigid with arbitrary countable target groups, as well as with target group $S^1$. Then, $\Gamma \actson (X,\mu)$ is stably W$^*$-superrigid. Even more precisely, we have the following.

Let $\Lambda \actson (Y,\eta)$ be an arbitrary free ergodic p.m.p.\ action and $\pi : \rL^\infty(X) \rtimes \Gamma \recht (\rL^\infty(Y) \rtimes \Lambda)^t$ a $*$-isomorphism for some $t > 0$. Then, $\Lambda \actson Y$ is induced from $\Lambda_0 \actson Y_0$ and there exist
\begin{itemize}
\item a finite normal subgroup $G \lhd \Gamma$ and a group isomorphism $\delta : \frac{\Gamma}{G} \recht \Lambda_0$,
\item a measure space isomorphism $\Delta : \frac{X}{G} \recht Y_0$ conjugating the actions, i.e.\ $\Delta(g \cdot x) = \delta(g) \cdot \Delta(x)$ for all $g \in \frac{\Gamma}{G}$ and a.e.\ $x \in X$,
\item a character $\om : \Gamma \recht S^1$,
\end{itemize}
such that $t = \frac{|G|}{[\Lambda: \Lambda_0]}$ and such that, after a unitary conjugacy, $\pi$ equals the composition of
\begin{itemize}
\item the automorphism $\pi_\om$ of $\rL^\infty(X) \rtimes \Gamma$ given by $\pi_\om(a u_g) = \om(g) a u_g$,
\item the canonical isomorphism $\rL^\infty(X) \rtimes \Gamma \recht \bigl(\rL^\infty\bigl(\frac{X}{G}\bigr) \rtimes \frac{\Gamma}{G}\bigr)^n$ where $n = |G|$,
\item the isomorphism $\pi_\Delta : \rL^\infty\bigl(\frac{X}{G}\bigr) \rtimes \frac{\Gamma}{G} \recht \rL^\infty(Y_0) \rtimes \Lambda_0$ given by $\pi_\Delta(au_g) = (a \circ \Delta^{-1}) u_{\delta(g)}$,
\item the canonical isomorphism $\rL^\infty(Y_0) \rtimes \Lambda_0 \recht (\rL^\infty(Y) \rtimes \Lambda)^{\frac{1}{m}}$ where $m = [\Lambda:\Lambda_0]$.
\end{itemize}
\end{lemma}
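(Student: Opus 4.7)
The proof is an assembly argument: everything needed has been prepared in the two paragraphs preceding the statement, so my plan is to invoke them in the right order and then unwind the chain of isomorphisms to extract the data $(G,\delta,\Delta,\om)$ together with the formula for $t$.

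First I would apply the ``from stable W$^*$-equivalence to stable orbit equivalence'' paragraph. Since $\Gamma \actson (X,\mu)$ satisfies the conclusion of Theorem \ref{thm.uniqueCartan}, after composing $\pi$ with a unitary conjugacy we may assume that $\pi$ sends the source group measure space Cartan subalgebra $\rL^\infty(X)$ into (an amplification of) $\rL^\infty(Y)$. A standard argument (as recalled there) then produces a stable orbit equivalence $\Delta_1$ between $\Gamma \actson X$ and $\Lambda \actson Y$ and an element $\om_1 \in \rZ^1(\Gamma \actson X)$ such that, up to inner conjugacy, $\pi$ is the composition of the automorphism $\pi_{\om_1}$ of $\rL^\infty(X) \rtimes \Gamma$ and the stable isomorphism $\pi_{\Delta_1}$ of group measure space factors induced by $\Delta_1$.

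Next I would apply $\Ufin$-cocycle superrigidity with countable target groups to the action $\Gamma \actson (X,\mu)$. By \cite[Proposition 5.11]{P0} and \cite[Lemma 4.7]{VBour}, any stable orbit equivalence with a free ergodic action out of a cocycle superrigid source must actually come from a stable conjugacy: there exist a finite normal subgroup $G \lhd \Gamma$, a finite index subgroup $\Lambda_0 < \Lambda$ acting on a non-negligible $\Lambda_0$-invariant subset $Y_0 \subset Y$ from which $\Lambda \actson Y$ is induced, a group isomorphism $\delta : \Gamma/G \recht \Lambda_0$ and a measure space isomorphism $\Delta : X/G \recht Y_0$ conjugating the quotient action to $\Lambda_0 \actson Y_0$, such that $\Delta_1$ is the composition of the canonical stable orbit equivalences attached to the quotient by $G$ on the $\Gamma$-side and to the induction from $\Lambda_0$ on the $\Lambda$-side, together with this conjugacy, followed by an inner automorphism of $\cR(\Gamma \actson X)$. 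Computing compression constants, this forces $t = |G|/[\Lambda : \Lambda_0]$. Composing appropriately, I can absorb the inner automorphism into $\pi_{\om_1}$ (possibly changing $\om_1$ to a new cocycle $\om_2 \in \rZ^1(\Gamma \actson X)$) and get that, up to inner conjugacy, $\pi = \pi_{\om_2} \circ \pi_{\mathrm{can}}$ where $\pi_{\mathrm{can}}$ is the stable isomorphism explicitly built from the quotient, the conjugacy $\Delta$, and the induction, i.e.\ the last three items of the formula in the statement.

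Finally, I would apply the cocycle superrigidity assumption with target group $S^1$ to replace the cocycle $\om_2$ by a cohomologous character. Writing $\om_2(g) = \vphi(g \cdot x) \om(g) \vphi(x)^{-1}$ with $\om : \Gamma \recht S^1$ a character and $\vphi : X \recht S^1$ measurable, the unitary $\vphi \in \cU(\rL^\infty(X))$ conjugates $\pi_{\om_2}$ to $\pi_\om$, and this inner conjugacy can be absorbed into the overall inner automorphism. This yields the required decomposition of $\pi$ with the character $\om$. Conceptually there is no hard step: all the deep inputs (unique Cartan decomposition, OE-to-conjugacy upgrade via cocycle superrigidity, untwisting of scalar cocycles) are black-boxed from Theorem \ref{thm.uniqueCartan} and the cited cocycle superrigidity results. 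The only point requiring some care is the bookkeeping: tracking how inner automorphisms and amplification constants move through the composition so that the final statement really says $t = |G|/[\Lambda:\Lambda_0]$ and that only a character, and not a general $1$-cocycle, survives. That bookkeeping is the main thing to grind out, but it is routine.
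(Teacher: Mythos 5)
Your proposal is correct and is essentially the paper's own argument: the authors simply say ``Summarizing the previous two paragraphs, we have proven the following lemma,'' and those two paragraphs are precisely the chain you describe (unique Cartan plus \cite{Si,FM} gives a stable orbit equivalence twisted by a $1$-cocycle, countable-target cocycle superrigidity upgrades the stable orbit equivalence to a stable conjugacy via \cite[Proposition 5.11]{P0} and \cite[Lemma 4.7]{VBour}, and $S^1$-cocycle superrigidity untwists the scalar cocycle into a character). Your bookkeeping of $t = |G|/[\Lambda:\Lambda_0]$ and absorption of inner automorphisms matches the intended reading.
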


\begin{remark}\label{rem.fundamentalgroup}
Let $\Gamma \actson (X,\mu)$ be a stably W$^*$-superrigid, free ergodic p.m.p.\ action. If $\Gamma$ has no finite normal subgroups and if the finite index subgroups of $\Gamma$ act ergodically on $(X,\mu)$, then $\rL^\infty(X) \rtimes \Gamma$ has trivial fundamental group.
\end{remark}

Recall that given any action $\Gamma \actson I$ of a countable group $\Gamma$ on a countable set $I$, the \emph{generalized Bernoulli action} with base probability space $(X_0,\mu_0)$, is defined as $\Gamma \actson (X_0,\mu_0)^I$ where $(g \cdot x)_i = x_{g^{-1} \cdot i}$ for all $g \in \Gamma$, $x \in X_0^I$ and $i \in I$.

Recall that given any orthogonal representation $\pi : \Gamma \recht \rO(\cK_\R)$ of $\Gamma$ on the real Hilbert space $\cK_\R$, the Gaussian functor allows to define the \emph{Gaussian} p.m.p.\ action of $\Gamma$ on the Gaussian probability space defined by $\cK_\R$.

\begin{theorem} \label{thm.stable-superrigid}
Let $\Gamma_1,\Gamma_2$ be countable groups with a common infinite amenable subgroup $\Sigma$. Assume that $\Sigma$ is a proper, normal subgroup of $\Gamma_2$ and that there exist $g_1,\ldots,g_k \in \Gamma_1$ such that
$$
\bigcap_{i=1}^k g_i \Sigma g_i^{-1} \quad\text{is finite} \; .
$$
Put $\Gamma = \Gamma_1 *_\Sigma \Gamma_2$.
\begin{enumerate}
\item If $\Gamma_1$ admits a non-amenable normal subgroup $H$ with the relative property (T), then all of the following actions are stably W$^*$-superrigid.
\begin{itemize}
\item Every free p.m.p.\ action $\Gamma \actson (X,\mu)$ whose restriction to $\Gamma_1$ is a
generalized Bernoulli action $\Gamma_1 \actson (X_0,\mu_0)^I$ with the property that both $H \cdot i$ and $\Sigma \cdot i$ are infinite for all $i \in I$.
\item Every free p.m.p.\ action $\Gamma \actson (X,\mu)$ whose restriction to $\Gamma_1$ is a Gaussian action defined by an orthogonal representation $\pi : \Gamma_1 \recht \rO(\cK_\R)$ with the property that both restrictions $\pi_{|H}$ and $\pi_{|\Sigma}$ have no non-zero finite dimensional subrepresentations.
\end{itemize}
\item Suppose that $\Gamma_1$ admits non-amenable commuting subgroups $H$ and $H'$ such that $H$ is normal in $\Gamma_1$. If $\Gamma \actson (X,\mu)$ is a free p.m.p.\ action whose restriction to $\Gamma_1$ is a generalized Bernoulli action
$\Gamma_1 \actson (X_0,\mu_0)^I$ with the properties that $\Stab i \cap H'$ is amenable for all $i \in I$ and that both $H \cdot i$ and $\Sigma \cdot i$ are infinite for all $i \in I$, then $\Gamma \actson (X,\mu)$ is stably W$^*$-superrigid.
\end{enumerate}
In particular, for all groups $\Gamma$ mentioned in this theorem, the plain Bernoulli action $\Gamma \actson (X_0,\mu_0)^\Gamma$ is stably W$^*$-superrigid.

More precisely, all actions $\Gamma \actson X$ appearing in the theorem satisfy the conclusions of Lemma \ref{lemma.assemble}.
\end{theorem}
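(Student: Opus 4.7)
The statement follows by verifying the two hypotheses of Lemma \ref{lemma.assemble}: that $\Gamma \actson (X,\mu)$ satisfies the conclusion of Theorem \ref{thm.uniqueCartan} and that it is $\Ufin$-cocycle superrigid. Since $\Ufin$ contains both all countable groups and $S^1$, a single cocycle superrigidity statement of this form subsumes the two cohomological requirements appearing in Lemma \ref{lemma.assemble}.

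For the first hypothesis, I would check that in both scenarios (1) and (2) the group $\Gamma = \Gamma_1 *_\Sigma \Gamma_2$ lies in the family $\cG$ of Definition \ref{def.G}. Indeed, $\Gamma_1$ contains either a non-amenable subgroup $H$ with the relative property (T) (case (1)) or two non-amenable commuting subgroups $H$ and $H'$ (case (2)), so condition \ref{def.G}.1 is met. The amenability of $\Sigma$ and the strict inclusion $\Sigma \subsetneq \Gamma_2$ are part of the hypotheses, yielding \ref{def.G}.2, and the partial malnormality condition \ref{def.G}.3 is also explicitly assumed. Theorem \ref{thm.uniqueCartan} then provides the unique group measure space Cartan subalgebra property for every amplification of $\rL^\infty(X) \rtimes \Gamma$.

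For the second hypothesis, I would invoke the cocycle superrigidity theorems of Popa \cite{P0, P-gap}. In case (1), the restriction of $\Gamma \actson X$ to the normal subgroup $H \lhd \Gamma_1$ is either a generalized Bernoulli action with all $H$-orbits infinite, or a Gaussian action whose defining orthogonal representation $\pi_{|H}$ has no non-zero finite-dimensional invariant subspace; both are weakly mixing on $H$. Together with the relative property (T) of $H$ and the natural malleability of the $\Gamma_1$-restriction, this fits the framework of \cite{P0}. In case (2), the commuting non-amenable subgroups $H, H' \subset \Gamma_1$ together with the assumption that $\Stab i \cap H'$ is amenable for all $i \in I$ supply the spectral gap ingredient required by \cite{P-gap}. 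In both cases one first obtains $\Ufin$-cocycle superrigidity of the $\Gamma_1$-action, and then propagates it across the amalgamation to get $\Ufin$-cocycle superrigidity of the full $\Gamma$-action, using the normality of $\Sigma$ in $\Gamma_2$ together with the weak mixing of the $\Sigma$-restriction, which follows from the infiniteness of $\Sigma$-orbits in the generalized Bernoulli case and from $\pi_{|\Sigma}$ having no finite-dimensional subrepresentations in the Gaussian case.

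The main technical point I expect to have to address carefully is this last propagation step from $\Gamma_1$-cocycle superrigidity to $\Gamma$-cocycle superrigidity: given a $\Ufin$-valued $1$-cocycle $\om$ for $\Gamma \actson X$, its restriction to $\Gamma_1$ becomes, after cohomologous adjustment, a group homomorphism $\delta_1 : \Gamma_1 \recht \cL$, and one must use the normality of $\Sigma$ in $\Gamma_2$ and the weak mixing of the $\Sigma$-action to force $\delta_1$ to extend uniquely to a homomorphism $\delta : \Gamma \recht \cL$ cohomologous to $\om$. Once $\Ufin$-cocycle superrigidity and the unique Cartan decomposition are both in place, the detailed conclusion of Theorem \ref{thm.stable-superrigid} is immediate from Lemma \ref{lemma.assemble}. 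The last sentence of the theorem, on the plain Bernoulli action $\Gamma \actson (X_0,\mu_0)^\Gamma$, is the special case $I = \Gamma$, where every orbit under any infinite subgroup of $\Gamma_1$ is automatically infinite, so that the hypotheses of cases (1) and (2) hold as soon as the corresponding rigidity assumption on $\Gamma_1$ is imposed.
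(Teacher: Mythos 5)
Your proposal follows essentially the same route as the paper: verify $\Gamma \in \cG$ for the unique Cartan decomposition via Theorem \ref{thm.uniqueCartan}, obtain $\Ufin$-cocycle superrigidity by applying \cite{P0} or \cite{P-gap} to the $\Gamma_1$-restriction and then propagating across the amalgam (this is precisely Lemma \ref{lemma.permanence} and Theorem \ref{thm.cocyclesuperrigid}), and conclude via Lemma \ref{lemma.assemble}. The only point you gloss over is that the propagation lemma requires cocycle superrigidity for \emph{all finite index} subgroups of $\Gamma_1$ (needed for the word-length induction over conjugates), not just $\Gamma_1$ itself, but you correctly identify the mechanism (normality of $\Sigma$ in $\Gamma_2$ plus weak mixing of the $\Sigma$-restriction) and flag it as the step requiring care.
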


Before proving Theorem \ref{thm.stable-superrigid}, we provide the following concrete examples of stably W$^*$-superrigid group actions.

\begin{example} \label{ex.firstex}
\begin{enumerate}
\item Denote by $T_n < \PSL(n,\Z)$ the subgroup of upper triangular matrices and assume $n \geq 3$. For an arbitrary non-trivial group $\Lambda$ and an arbitrary infinite subgroup $\Sigma < T_n$, put $\Gamma = \PSL(n,\Z) *_{\Sigma} (\Sigma \times \Lambda)$. Whenever $\Gamma \actson I$ is such that $\Sigma \cdot i$ is infinite for all $i \in I$, the generalized Bernoulli actions $\Gamma \actson (X_0,\mu_0)^I$ are stably W$^*$-superrigid.
\item Let $\Sigma$ be an infinite amenable subgroup of the non-amenable group $H$. Assume that $H$ is finitely generated and that $\Sigma \cap \cZ(H) = \{e\}$. Let $\Lambda$ be an arbitrary non-trivial group.
Define $\Gamma_1 = H \times H$, view $\Sigma$ as a subgroup of $\Gamma_1$ diagonally and put $\Gamma = \Gamma_1 *_\Sigma \Gamma_2$. Whenever $\Gamma \actson I$ is such that $\Sigma \cdot i$ and $(H \times \{e\})\cdot i$ is infinite for all $i \in I$, the generalized Bernoulli actions $\Gamma \actson (X_0,\mu_0)^I$ are stably W$^*$-superrigid.
\end{enumerate}
\end{example}

Actually, exploiting the full strength of Theorem \ref{thm.stable-superrigid}, many more examples of stably W$^*$-superrigid actions can be given. Given an arbitrary faithful\footnote{This means that non-trivial group elements act non-trivially.} p.m.p.\ action $\Lambda \actson (X_0,\mu_0)$, we construct a stably W$^*$-superrigid, free ergodic p.m.p.\ action $\Gamma \actson (X,\mu)$ such that $\Lambda$ is a subgroup of $\Gamma$ and such that $\Lambda \actson (X_0,\mu_0)$ is a quotient of the restriction of $\Gamma \actson (X,\mu)$ to $\Lambda$.

Recall first the construction of a \emph{co-induced action.} Let $\Lambda < \Gamma$ be a subgroup and $\Lambda \actson (X_0,\mu_0)$ a p.m.p.\ action. Choose a map $\pi : \Gamma \recht \Lambda$ satisfying $\pi(gh) = \pi(g) h$ for all $g \in \Gamma, h \in \Lambda$. Define the $1$-cocycle $\om$ for the action $\Gamma \actson \Gamma/\Lambda$ with values in $\Lambda$ by the formula
$$\om(g,h\Lambda) = \pi(gh) \pi(h)^{-1} \; .$$
Different choices of $\pi$ lead to cohomologous $1$-cocycles $\om$. Define the probability space $(X,\mu) := (X_0,\mu_0)^{\Gamma/\Lambda}$ and the p.m.p.\ action $\Gamma \actson (X,\mu)$ given by
$$(g^{-1} \cdot x)_{h \Lambda} := \om(g,h\Lambda)^{-1} \cdot x_{gh\Lambda} \quad\text{for all}\;\; g,h \in \Lambda, x \in X \; .$$
We call $\Gamma \actson (X,\mu)$ the \emph{co-induced action of $\Lambda \actson (X_0,\mu_0)$ to $\Gamma$.} Different choices of $\pi$ lead to conjugate actions.

Note that we can choose $\pi$ such that $\pi(h) = h$ for all $h \in \Lambda$. Then, the quotient map $\theta : X \recht X_0 : x \mapsto x_{e \Lambda}$ satisfies $\theta(h \cdot x) = h \cdot \theta(x)$ for all $h \in \Lambda, x \in X$. Hence, $\Lambda \actson X_0$ arises as a quotient of the restriction of $\Gamma \actson X$ to $\Lambda$.

\begin{example} \label{ex.secondex}
Take $\Gamma = \PSL(n,\Z) *_{\Sigma} (\Sigma \times \Lambda)$ as in Example \ref{ex.firstex}.1 or take $\Gamma = (H \times H) *_\Sigma (\Sigma \times \Lambda)$ as in Example \ref{ex.firstex}.2. Let $\Lambda \actson (X_0,\mu_0)$ be an arbitrary faithful p.m.p.\ action. Then, the co-induced action $\Gamma \actson (X,\mu)$ is stably W$^*$-superrigid.

In the first example, put $\Gamma_1 = \PSL(n,\Z)$ and in the second example, put $\Gamma_1 = H \times H$. In order to apply Theorem \ref{thm.stable-superrigid}, it suffices to observe that the restriction of the co-induced action $\Gamma \actson (X,\mu)$ to $\Gamma_1$ is a $\Gamma_1$-Bernoulli action. Indeed, the action $\Gamma_1 \times \Lambda \actson \Gamma$ by left-right multiplication is free. Therefore, we can choose $\pi : \Gamma \recht \Lambda$ satisfying $\pi(ghk) = \pi(h)k$ for all $g \in \Gamma_1, h \in \Gamma, k \in \Lambda$. Associated with $\pi$ is the $1$-cocycle $\om$ for $\Gamma \actson \Gamma/\Lambda$, which now satisfies $\om(g,h\Lambda) = e$ for all $g \in \Gamma_1, h \in \Gamma$. Hence, the restriction $\Gamma_1 \actson (X,\mu)$ is precisely the Bernoulli action $\Gamma_1 \actson (X_0,\mu_0)^{\Gamma/\Lambda}$. The latter can be seen as the plain Bernoulli action $\Gamma_1 \actson Y_0^{\Gamma_1}$, where $Y_0 := X_0^{\Gamma_1 \backslash \Gamma / \Lambda}$.
\end{example}

\begin{example}\label{ex.Haagerup}
Let $\Gamma_1$ be an infinite group with property (T), $\Sigma < \Gamma_1$ an infinite subgroup with the Haagerup property and $\Lambda$ an arbitrary non-trivial group. Put $\Gamma = \Gamma_1 *_\Sigma (\Sigma \times \Lambda)$. Using paragraph \ref{subsec.Haagerup} instead of Theorem \ref{thm.uniqueCartan}, it follows that the plain Bernoulli action $\Gamma \actson (X_0,\mu_0)^\Gamma$ is stably W$^*$-superrigid. Similarly (cf.\ Example \ref{ex.secondex}), the co-induced action $\Gamma \actson (X,\mu)$ of an arbitrary faithful p.m.p.\ action $\Lambda \actson (X_0,\mu_0)$, follows stably W$^*$-superrigid.
\end{example}

\subsection{Cocycle superrigidity for actions of amalgamated free products}

Recall from paragraph \ref{subsec.cocycle} the notion of cocycle superrigidity.
Theorem \ref{thm.stable-superrigid} will be proven as a consequence of Theorem \ref{thm.uniqueCartan} and a number of cocycle superrigidity theorems from \cite{P0,P-gap} that we recall here.

We first prove the following permanence lemma for $\cL$-cocycle superrigidity. It is a direct consequence of techniques in \cite[Section 3]{P0}, but  we give a short and full proof for the convenience of the reader. We also deal with arbitrary finite index subgroups, which will be useful in Section \ref{sec.bimodules}.

\begin{lemma}\label{lemma.permanence}
Let $\Gamma = \Gamma_1 *_\Sigma \Gamma_2$ be an amalgamated free product where $\Sigma \lhd \Gamma_2$ is a normal subgroup. Let $\Gamma \actson (X,\mu)$ be a p.m.p.\ action and assume that its restriction $\Sigma \actson (X,\mu)$ is weakly mixing. Let $\cL$ be a Polish group whose topology is induced by a separable complete bi-invariant metric (for instance, $\cL \in \Ufin$).

If for every finite index subgroup $\Gamma_1' < \Gamma_1$ the action $\Gamma_1' \actson (X,\mu)$ is $\cL$-cocycle superrigid, then $\Gamma' \actson (X,\mu)$ is $\cL$-cocycle superrigid for all finite index subgroups $\Gamma' < \Gamma$.
\end{lemma}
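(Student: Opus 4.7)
The plan is to first handle the case $\Gamma' = \Gamma$ and then bootstrap to finite index subgroups. Given a $1$-cocycle $\omega : \Gamma \times X \to \cL$, we aim to show that $\omega$ is cohomologous to a group morphism $\delta : \Gamma \to \cL$.

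\emph{Trivialization on $\Gamma_1$.} Applying the hypothesis to $\Gamma_1$ itself (viewed as a finite index subgroup of $\Gamma_1$), we obtain a measurable $\vphi : X \to \cL$ and a homomorphism $\delta_1 : \Gamma_1 \to \cL$ with $\omega(h,x) = \vphi(h\cdot x)\,\delta_1(h)\,\vphi(x)^{-1}$ for all $h \in \Gamma_1$ and a.e.\ $x$. Replacing $\omega$ by its cohomologous twist $\omega'(g,x) := \vphi(g\cdot x)^{-1}\omega(g,x)\vphi(x)$, we may assume $\omega(h,x) = \delta_1(h)$ for all $h \in \Gamma_1$, and in particular $\omega(s,x) = \delta_1(s)$ for all $s \in \Sigma$.

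\emph{Twisted equivariance on $\Gamma_2$.} For $g \in \Gamma_2$ and $s \in \Sigma$, the normality $\Sigma \lhd \Gamma_2$ gives $gsg^{-1} \in \Sigma \subset \Gamma_1$. Applying the cocycle identity to the two factorizations $gs = g \cdot s = (gsg^{-1})\cdot g$ yields
\[
\omega(g, s\cdot x) \;=\; \delta_1(gsg^{-1})\,\omega(g,x)\,\delta_1(s)^{-1}.
\]
Thus $F_g(x) := \omega(g,x)$ satisfies a $\Sigma$-twisted equivariance with source homomorphism $\alpha(s) := \delta_1(gsg^{-1})$ and target homomorphism $\beta(s) := \delta_1(s)$.

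\emph{Essential constancy of $F_g$.} Form the auxiliary map $H(x,y) := F_g(x) F_g(y)^{-1}$ on $X \times X$; the equivariance yields $H(s\cdot x, s\cdot y) = \alpha(s)\,H(x,y)\,\alpha(s)^{-1}$. By bi-invariance of $d$, the real-valued function $(x,y) \mapsto d(H(x,y),e) = d(F_g(x),F_g(y))$ is $\Sigma$-invariant on $X\times X$, hence a.e.\ constant equal to some $c \geq 0$ by weak mixing of $\Sigma \actson X$. To upgrade this to $F_g$ itself being essentially constant, I would view the twisted equivariance as an intertwining relation: after left-multiplying $F_g$ by an element conjugating $\alpha$ into $\beta$ (which must exist, as otherwise the pushforward $(F_g)_*\mu$ would witness an incompatibility), we reduce to the case $\alpha = \beta$, so $F_g$ intertwines a unitary $\mathrm{Ad}(\beta)$-action of $\Sigma$. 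Realizing $\cL$ in the $\Ufin$ ambient and thinking of $F_g$ as an element of $\mathrm{L}^2(X)\otimes \mathrm{L}^2(N)$ invariant under $\lambda_\mu \otimes \mathrm{Ad}(\beta(\cdot))$, weak mixing of $\Sigma$ identifies the invariants with $\C\cdot 1 \otimes \mathrm{L}^2(N)^{\mathrm{Ad}(\beta)}$; since $F_g$ takes values in $\cL$, it is a.e.\ equal to some $\delta_2(g) \in \cL$.

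\emph{Assembly.} As $g$ varies over $\Gamma_2$, the cocycle identity shows that $\delta_2 : \Gamma_2 \to \cL$ is a homomorphism, and it coincides with $\delta_1$ on $\Sigma$. By the universal property of $\Gamma = \Gamma_1 *_\Sigma \Gamma_2$, these glue to a homomorphism $\delta : \Gamma \to \cL$, and $\omega(g,x) = \delta(g)$ a.e.\ for $g$ in the generating set $\Gamma_1 \cup \Gamma_2$; the cocycle identity then propagates the equality to all of $\Gamma$.

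\emph{Finite index case.} For a general finite index subgroup $\Gamma' < \Gamma$, the Bass-Serre theory realizes $\Gamma'$ as the fundamental group of a finite graph of groups with vertex groups $\Gamma' \cap g_v\Gamma_{i(v)}g_v^{-1}$ and edge groups $\Gamma' \cap g_e\Sigma g_e^{-1}$. The type-$1$ vertex groups are measurably conjugate to finite index subgroups of $\Gamma_1$ (to which the hypothesis applies), and weak mixing of $\Sigma \actson X$ descends to any finite index subgroup of $\Sigma$ as well as to any $\Gamma$-conjugate of $\Sigma$ (since conjugating by $g \in \Gamma$ is a measure-preserving automorphism of $X$). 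Running the three steps above at each vertex, and coordinating the cohomology adjustments along a spanning tree of the graph, produces the desired homomorphism $\delta : \Gamma' \to \cL$.

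\emph{Main obstacle.} The delicate step is the passage from invariance of the scalar function $d(F_g(\cdot),F_g(\cdot))$ to constancy of the $\cL$-valued map $F_g$, which is where both the bi-invariance of the metric and the full strength of weak mixing (ergodicity on $X\times X$ and absence of finite dimensional subrepresentations) are used. The Bass-Serre bookkeeping in the finite index step is largely formal but requires care in matching the coboundaries across edge groups.
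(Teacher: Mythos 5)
Your treatment of the case $\Gamma' = \Gamma$ is essentially the right argument and matches the structure of the paper's ``Principle 1'', but the middle of the \emph{essential constancy} paragraph is circular and should be removed. After observing that $(x,y) \mapsto d(F_g(x),F_g(y))$ is invariant under the diagonal $\Sigma$-action, ergodicity of $\Sigma \actson X \times X$ (weak mixing) gives $d(F_g(x),F_g(y)) = c$ a.e.\ for some constant $c$; then separability of $\cL$ forces $c = 0$ (cover $\cL$ by countably many balls of radius $c/3$, pick one of positive $(F_g)_*\mu$-measure, apply Fubini to get a pair with $d < 2c/3$). Instead, you appeal to an element conjugating $\alpha$ into $\beta$ ``which must exist, as otherwise the pushforward $(F_g)_*\mu$ would witness an incompatibility''--- but the existence of such a conjugator is precisely a consequence of $F_g$ being constant, so you cannot invoke it to prove constancy. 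The paper avoids this entirely by citing \cite[Lemma 5.4]{PV4}, which is exactly the statement that a measurable $F$ satisfying $F(h\cdot x) = \alpha(h)F(x)\beta(h)^{-1}$ over a weakly mixing group is a.e.\ constant.

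The finite index case is where the real gap lies. Your Bass--Serre plan of ``running the three steps at each vertex and coordinating the cohomology adjustments along a spanning tree'' is not formal: at each new vertex you apply cocycle superrigidity and obtain $\om$ \emph{cohomologous} to a morphism, via some coboundary $\vphi_v$, and you must then reconcile $\vphi_v$ with the coboundary chosen at a neighbouring vertex across the shared edge group; there is no reason these agree. The ingredient that makes this work is what the paper isolates as \emph{Principle 2}: if $\om$ is already a group morphism on a weakly mixing subgroup and is cohomologous to a group morphism on a bigger group, then $\om$ is already a group morphism (no further coboundary needed). Without this untwisting device, the ``coordination along a spanning tree'' cannot be carried out. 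The paper sidesteps the graph-of-groups bookkeeping altogether: it passes to a finite index $\Gamma'' \lhd \Gamma$ inside $\Gamma'$, sets $\Gamma_i' = \Gamma_i \cap \Gamma''$, $\Sigma' = \Sigma \cap \Gamma''$ (note $\Sigma' \lhd \Gamma_2'$ survives), and proves by induction on $|g|$ that $\om$ is a group morphism on every conjugate $g\Gamma_i' g^{-1}$, using Principle~1 for steps across $\Gamma_2$ and cocycle superrigidity plus Principle~2 for steps across $\Gamma_1$. The subgroup generated by all these conjugates is normal in $\Gamma'$, and one final application of Principle~1 finishes. Your Bass--Serre route can likely be made to work, but as written it omits the key technical step, and the paper's length-induction is both shorter and avoids any Bass--Serre decomposition of $\Gamma'$.
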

\begin{proof}
From \cite[Section 3]{P0} we deduce the following two principles. Let $\cL$ be as in the lemma, $\Lambda \actson (X,\mu)$ a p.m.p.\ action and $\Lambda_0 < \Lambda$ a subgroup such that $\Lambda_0 \actson (X,\mu)$ is weakly mixing. Let $\om : \Lambda \times X \recht \cL$ be a $1$-cocycle such that $\om(h,x) = \delta(h)$ for all $h \in \Lambda_0$, where $\delta : \Lambda_0 \recht \cL$ is a group morphism.

Principle 1. If $\Lambda_0$ is normal in $\Lambda$, then $\om$ is a group morphism on the whole of $\Lambda$. Indeed, choose $g \in \Lambda$ and put $\psi(x) = \om(g,x)$. We have to prove that $\psi$ is essentially constant. But $\psi(h \cdot x) = \delta(ghg^{-1}) \, \psi(x) \, \delta(h)^{-1}$ for all $h \in \Lambda_0$ and a.e.\ $x \in X$. By \cite[Lemma 5.4]{PV4} the function $\psi$ is essentially constant.

Principle 2. If $\om$ is cohomologous to a group morphism on $\Lambda$, then $\om$ is already a group morphism on $\Lambda$. Indeed, by assumption we find a measurable map $\psi : X \recht \cL$ and a group morphism $\rho : \Lambda \recht \cL$ such that $\om(g,x) = \psi(g \cdot x)^{-1} \rho(g) \psi(x)$ for all $g \in \Lambda$ and a.e.\ $x \in X$. It suffices to prove that $\psi$ is essentially constant. But $\psi(h \cdot x) = \rho(h) \psi(x) \delta(h)^{-1}$ for all $h \in \Lambda_0$ and a.e.\ $x \in X$. Again by \cite[Lemma 5.4]{PV4} the function $\psi$ is essentially constant.

Take a finite index subgroup $\Gamma' < \Gamma$ and a $1$-cocycle $\om : \Gamma' \times X \recht \cL$. We have to prove that $\om$ is cohomologous to a group morphism. Since $\Gamma_1 \cap \Gamma'$ is a finite index subgroup of $\Gamma_1$, we may assume that $\om$ is already a group morphism on $\Gamma_1 \cap \Gamma'$. We prove that $\om$ is then a group morphism on the whole of $\Gamma'$.

Take a finite index subgroup $\Gamma\dpr < \Gamma'$ such that $\Gamma\dpr$ is normal in $\Gamma$. Define $\Gamma_i' = \Gamma_i \cap \Gamma\dpr$ and $\Sigma' = \Sigma \cap \Gamma\dpr$. Note that $\Sigma' < \Sigma$ has finite index and that $\Gamma_i' \lhd \Gamma_i$, $\Sigma' \lhd \Gamma_2'$ are normal subgroups. Also $\Sigma' \actson (X,\mu)$ is still weakly mixing.

We prove by induction on $|g|$ that $\om$ is a group morphism on $g \Gamma_i' g^{-1}$ for all $g \in \Gamma$ and $i = 1,2$. For $|g|=0$ we already know that $\om$ is a group morphism on $\Gamma_1'$. In particular, $\om$ is a group morphism on $\Sigma'$ and by principle~1, also on $\Gamma_2'$. Assume that the statement is true for all elements of length $n-1$. Take $g \in \Gamma$ with $|g|=n$. Write $g = g_0 h$ where $|g_0| = n-1$ and $h \in \Gamma_1 \cup \Gamma_2$. If $h \in \Gamma_1$, we have $g \Gamma_1' g^{-1} = g_0 \Gamma_1' g_0^{-1}$. By the induction hypothesis, $\om$ is a group morphism on $g \Gamma_1' g^{-1}$ and, in particular, on $g \Sigma' g^{-1}$. By principle~1, $\om$ is also a group morphism on $g \Gamma_2' g^{-1}$. Next, if $h \in \Gamma_2$, we have $g \Gamma_2' g^{-1} = g_0 \Gamma_2' g_0^{-1}$. By the induction hypothesis, $\om$ is a group morphism on $g \Gamma_2' g^{-1}$ and, in particular, on $g \Sigma' g^{-1}$. Consider the $1$-cocycle $\mu : \Gamma_1' \times X \recht \cL$ given by $\mu(h,x) = \om(g h g^{-1}, g \cdot x)$. Since $\Gamma_1' \actson (X,\mu)$ is $\cL$-cocycle superrigid, it follows that $\mu$ is cohomologous to a group morphism. But $\mu$ is already a group morphism on $\Sigma'$. By principle~2, $\mu$ is a group morphism on $\Gamma_1'$ and hence, $\om$ is a group morphism on $g \Gamma_1' g^{-1}$.

Define the subgroup $\Gamma\tpr < \Gamma'$ generated by $g \Gamma_i' g^{-1}$ for all $g \in \Gamma$ and $i = 1,2$. We have already proven that $\om$ is a group morphism on $\Gamma\tpr$. By construction, $\Gamma\tpr$ is normal in $\Gamma'$. So, by principle~1, $\om$ is a group morphism on $\Gamma'$.
\end{proof}

\begin{theorem}\label{thm.cocyclesuperrigid}
Let $\Gamma = \Gamma_1 *_\Sigma \Gamma_2$ be an amalgamated free product over an infinite subgroup $\Sigma$ that is normal in $\Gamma_2$. Let $\Gamma \actson (X,\mu)$ be a free ergodic p.m.p.\ action. Assume that the group $\Gamma$ and its action $\Gamma \actson (X,\mu)$ satisfy condition 1 or condition 2 in Theorem \ref{thm.stable-superrigid}. Then, for every finite index subgroup $\Gamma' < \Gamma$, the action $\Gamma' \actson (X,\mu)$ is $\Ufin$-cocycle superrigid.
\end{theorem}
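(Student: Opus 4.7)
The plan is to combine Lemma \ref{lemma.permanence} with Popa's cocycle superrigidity theorems applied to finite-index subgroups of $\Gamma_1$. The first step is to verify the weak mixing hypothesis of Lemma \ref{lemma.permanence} for the action $\Sigma \actson (X,\mu)$: for the generalized Bernoulli actions in cases 1 and 2 this follows from the assumption that $\Sigma \cdot i$ is infinite for every $i \in I$; for the Gaussian action in case 1 it follows from the fact that $\pi_{|\Sigma}$ has no non-zero finite-dimensional subrepresentation, which is precisely the criterion for weak mixing of a Gaussian action.

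With this in place, Lemma \ref{lemma.permanence} reduces the claim to showing that for every finite-index subgroup $\Gamma_1' < \Gamma_1$, the restriction $\Gamma_1' \actson (X,\mu)$ is $\Ufin$-cocycle superrigid. I would then check that the structural hypotheses of condition 1 or 2 of Theorem \ref{thm.stable-superrigid} restrict well to $\Gamma_1'$. Under condition 1, $H \cap \Gamma_1'$ is non-amenable, is normal in $\Gamma_1'$ (since $H \lhd \Gamma_1$), and inherits the relative property (T). In the Bernoulli sub-case its orbits on $I$ remain infinite, being of finite index in those of $H$. In the Gaussian sub-case, no non-zero finite-dimensional $(H \cap \Gamma_1')$-invariant subspace can exist: if $V \subset \cK_\R \otimes \C$ were one and $H_0 \lhd H$ denotes the normal core of $H \cap \Gamma_1'$ in $H$ (of finite index), then $V$ is $H_0$-invariant and $\sum_{gH_0 \in H/H_0} \pi(g)V$ is a non-zero finite-dimensional $H$-invariant subspace, contradicting the hypothesis on $\pi_{|H}$. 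Under condition 2, $H \cap \Gamma_1'$ and $H' \cap \Gamma_1'$ are commuting non-amenable subgroups of $\Gamma_1'$ with $H \cap \Gamma_1' \lhd \Gamma_1'$, and $\Stab(i) \cap (H' \cap \Gamma_1')$ is a subgroup of the amenable $\Stab(i) \cap H'$.

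The final step is to invoke Popa's cocycle superrigidity theorems. Under condition 1, this is the $\Ufin$-cocycle superrigidity of generalized Bernoulli and Gaussian actions for groups admitting a non-amenable relatively-(T) subgroup, from \cite{P0}; under condition 2, it is the spectral-gap cocycle superrigidity of generalized Bernoulli actions for groups with commuting non-amenable subgroups, from \cite{P-gap}. Applied directly to $\Gamma_1' \actson (X,\mu)$, either gives $\Ufin$-cocycle superrigidity for every finite-index $\Gamma_1' < \Gamma_1$, and combining with Lemma \ref{lemma.permanence} completes the proof.

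The main obstacle is a bookkeeping one: making sure that every rigidity ingredient (non-amenability, relative (T), infinite orbits, absence of finite-dimensional subrepresentations, commuting non-amenability, amenability of intersected stabilizers) survives restriction to arbitrary finite-index subgroups of $\Gamma_1$. The only piece that is not completely automatic is the absence of non-zero finite-dimensional subrepresentations in the Gaussian case, which needs the normal-core trick above; everything else is immediate and the application of Popa's theorems is then a direct citation.
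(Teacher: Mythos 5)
Your proposal is correct and takes essentially the same approach as the paper: verify weak mixing of $\Sigma \actson X$, invoke Lemma \ref{lemma.permanence} to reduce to $\Ufin$-cocycle superrigidity of $\Gamma_1' \actson X$ for every finite-index $\Gamma_1' < \Gamma_1$, and then cite Popa's theorems from \cite{P0} (relative property (T)) and \cite{P-gap} (spectral gap). The only difference is that the paper is terser — it simply writes that the $\Gamma_1'$-restriction is a consequence of \cite[Theorem 0.1]{P0} or \cite[Theorem 1.1]{P-gap} — whereas you make explicit the (routine but genuinely necessary) verification that the structural hypotheses of condition 1, respectively condition 2, restrict to arbitrary finite-index subgroups of $\Gamma_1$; your normal-core trick for the absence of non-zero finite-dimensional subrepresentations in the Gaussian sub-case is exactly the right argument for the one non-obvious step.
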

\begin{proof}
Both conditions 1 and 2 imply that $\Sigma \actson (X,\mu)$ is weakly mixing. By Lemma \ref{lemma.permanence}, it suffices to prove that $\Gamma_1' \actson (X,\mu)$ is $\Ufin$-cocycle superrigid for every finite index subgroup $\Gamma_1' < \Gamma_1$. Under conditions 1, this is a consequence of \cite[Theorem 0.1]{P0}. Under conditions 2, this follows from \cite[Theorem 1.1]{P-gap}.
\end{proof}

Finally, for later use in Section \ref{sec.bimodules} we record the following lemma about weak $1$-cocycles (cf.\ \cite[Theorem 4.1]{PSa}).

\begin{lemma}\label{lemma.weakcocycle}
Let $\Gamma \actson (X,\mu)$ be a p.m.p.\ action that is $\Ufin$-cocycle superrigid. Assume that $\Om : \Gamma \times \Gamma \recht S^1$ is a scalar $2$-cocycle on $\Gamma$ and that $\om : \Gamma \times X \recht S^1$ is a measurable map satisfying
$$\om(gh,x) = \Om(g,h) \om(g,h \cdot x) \om(h,x)$$
for all $g,h \in \Gamma$ and a.e.\ $x \in X$. Then, there exist a measurable function $\vphi : X \recht S^1$ and a map $\delta : \Gamma \recht S^1$ such that
$$\om(g,x) = \vphi(g \cdot x) \delta(g) \overline{\vphi(x)}$$
for all $g \in \Gamma$ and a.e.\ $x \in X$. In particular, $\Om$ is a coboundary: $\Om(g,h) = \delta(gh) \overline{\delta(g)} \overline{\delta(h)}$ for all $g,h \in \Gamma$.
\end{lemma}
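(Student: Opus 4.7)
The plan is to convert the projective $1$-cocycle $\omega$ into a genuine $\Ufin$-valued $1$-cocycle by lifting to the central extension of $\Gamma$ defined by $\Omega$, apply $\Ufin$-cocycle superrigidity, and then peel off the two coordinates of the resulting decomposition.

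First I would introduce the Polish group $\widetilde{\Gamma}$ whose underlying Borel space is $S^1 \times \Gamma$ and whose multiplication is $(z,g)\cdot(w,h) := (zw\,\Omega(g,h),\, gh)$; the $2$-cocycle identity for $\Omega$ is exactly associativity, and after a standard scalar adjustment I may assume $\Omega$ is normalised so that $\Omega(e,\cdot) = \Omega(\cdot,e) = 1$. Realising $\widetilde{\Gamma}$ as the closed subgroup $\{zv_g : z \in S^1,\, g \in \Gamma\}$ of the unitary group of the twisted group von Neumann algebra $\rL_\Omega(\Gamma)$, and using that $\rL_\Omega(\Gamma)$ embeds into a II$_1$ factor (for instance into $\rL_\Omega(\Gamma) \ovt \rL(\F_\infty)$), gives $\widetilde{\Gamma} \in \Ufin$. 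I would then define $\omega' : \Gamma \times X \recht \widetilde{\Gamma}$ by $\omega'(g,x) := (\omega(g,x),\, g)$; a direct computation shows that the twisted identity satisfied by $\omega$ is exactly the genuine $1$-cocycle identity for $\omega'$.

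By the $\Ufin$-cocycle superrigidity of $\Gamma \actson (X,\mu)$, there exist a measurable $\Phi : X \recht \widetilde{\Gamma}$ and a group homomorphism $\Delta : \Gamma \recht \widetilde{\Gamma}$ with $\omega'(g,x) = \Phi(g\cdot x)\,\Delta(g)\,\Phi(x)^{-1}$ almost everywhere. Writing $\Phi(x) = (\psi(x),\gamma(x))$ and $\Delta(g) = (\chi(g),\rho(g))$ (so that $\rho : \Gamma \recht \Gamma$ is a homomorphism), the $\Gamma$-coordinate of the identity reads $g = \gamma(g\cdot x)\,\rho(g)\,\gamma(x)^{-1}$. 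Picking $g_0 \in \Gamma$ with $X_0 := \gamma^{-1}(g_0)$ of positive measure and using ergodicity to see that $\{g : \mu(X_0 \cap g^{-1}X_0) > 0\}$ generates $\Gamma$, the homomorphism $\rho$ must agree with $\Ad(g_0^{-1})$ on a generating set and hence everywhere; plugging $\rho = \Ad(g_0^{-1})$ back into the identity with $x \in X_0$ then forces $\gamma(g\cdot x) = g_0$ for every $g \in \Gamma$, so ergodicity gives $\gamma \equiv g_0$ almost everywhere.

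Next I would absorb $g_0$ by replacing $\Phi(x)$ with $\Phi(x)(1,g_0^{-1})$ and $\Delta$ with $\Omega(g_0^{-1},g_0)^{-1}\,(1,g_0)\,\Delta(\cdot)\,(1,g_0^{-1})$. A direct check shows this preserves the cohomology relation, leaves $\Delta$ a homomorphism, and now $\Phi$ takes values in $S^1 \times \{e\}$ while $\Delta$ has the form $\Delta(g) = (\delta(g), g)$. Identifying first coordinates, write $\Phi(x) = (\vphi(x), e)$ and $\Delta(g) = (\delta(g), g)$ with $\vphi : X \recht S^1$ and $\delta : \Gamma \recht S^1$. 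Since $\Delta$ is a homomorphism and $\Omega$ is normalised, this amounts to $\delta(gh) = \delta(g)\delta(h)\Omega(g,h)$, and reading off the $S^1$-coordinate of $\omega'(g,x) = \Phi(g\cdot x)\Delta(g)\Phi(x)^{-1}$ yields $\omega(g,x) = \vphi(g\cdot x)\,\delta(g)\,\overline{\vphi(x)}$; rearranging the relation for $\delta$ gives the coboundary formula $\Omega(g,h) = \delta(gh)\,\overline{\delta(g)}\,\overline{\delta(h)}$. The one subtle point in the argument is the central extension construction together with the verification $\widetilde{\Gamma} \in \Ufin$; once that is in place, everything reduces to mechanically unwinding $\Ufin$-cocycle superrigidity coordinate by coordinate and using ergodicity to pin down $\gamma$.
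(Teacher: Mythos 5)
Your proposal follows the same route as the paper: form the twisted group von Neumann algebra $\rL_\Omega(\Gamma)$, view the $S^1$-extension of $\Gamma$ inside its unitary group (hence in $\Ufin$), promote $\om$ to a genuine $1$-cocycle with values in that extension, apply $\Ufin$-cocycle superrigidity, and read off $S^1$-coordinates. The paper's proof is terse and simply says ``take the $S^1$-part of $\theta(x)$ and $\eta(g)$''; you correctly notice that the $S^1$-part is not a group homomorphism on the central extension, so one has to control the $\Gamma$-coordinate $\gamma$ of the transfer map, and that is precisely what the bulk of your argument does. In this sense your proposal is more careful than the paper.

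There is, however, one step that does not hold as stated. You claim that ergodicity alone forces $\{g \in \Gamma : \mu(X_0 \cap g^{-1}X_0) > 0\}$ to generate $\Gamma$. Ergodicity only gives that this return set generates a \emph{finite-index} subgroup $\Gamma_0$ (indeed $\bigcup_{g \in \Gamma_0} g X_0$ is $\Gamma_0$-invariant, its $\Gamma/\Gamma_0$-translates are pairwise essentially disjoint, and ergodicity forces $[\Gamma:\Gamma_0] < \infty$), not that $\Gamma_0 = \Gamma$. So your deduction that $\rho = \Ad(g_0^{-1})$ everywhere, and that $\gamma$ is essentially constant, is not justified on ergodicity alone. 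The cleanest way to close this gap is to invoke weak mixing: from the coordinate identity $\gamma(g\cdot x) = g\gamma(x)\rho(g)^{-1}$, a weak-mixing argument in the spirit of \cite[Lemma 5.4]{PV4} (applied with the countable group $\Gamma \in \Ufin$ as target, and the two homomorphisms $\id_\Gamma$ and $\rho$) gives directly that $\gamma$ is essentially constant. In all applications in the paper the relevant action $\Gamma' \actson X$ is weakly mixing, so this is a harmless patch, but as written your return-set argument has a real hole, and it is also worth noting that the lemma's stated hypothesis ($\Ufin$-cocycle superrigidity, with no explicit weak mixing assumption) does not hand you this for free without an additional observation.
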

\begin{proof}
Form the twisted group von Neumann algebra $N := \rL_\Omega(\Gamma)$ generated by unitaries $u_g$, $g \in \Gamma$ satisfying $u_g u_h = \Om(g,h) u_{gh}$ and equipped with a trace $\tau$ satisfying $\tau(u_g) = 0$ if $g \neq e$. Denote by $\cL$ the closed subgroup of $\cU(N)$ consisting of the unitaries $\lambda u_g$ with $\lambda \in S^1$, $g \in \Gamma$. Writing $\mu(g,x) = \om(g,x) u_g$, it follows that $\mu$ is a $1$-cocycle for $\Gamma \actson (X,\mu)$ with values in $\cL$.

Since $\Gamma \actson (X,\mu)$ is $\Ufin$-cocycle superrigid, we find a measurable map $\theta : X \recht \cL$ and a group morphism $\eta : \Gamma \recht \cL$ such that $\mu(g,x) = \theta(g \cdot x) \eta(g) \theta(x)^{-1}$ for all $g \in \Gamma$ and a.e.\ $x \in X$. Denoting by $\vphi(x)$, resp.\ $\delta(g)$, the $S^1$-part of $\theta(x)$, resp.\ $\eta(g)$, the lemma is proven.
\end{proof}

\subsection{Proof of Theorem \ref{thm.stable-superrigid}}

\begin{proof}[Proof of Theorem \ref{thm.stable-superrigid}]
Since $\Gamma$ belongs to $\cG$ the conclusion of Theorem \ref{thm.uniqueCartan} holds. By Theorem \ref{thm.cocyclesuperrigid}, $\Gamma \actson (X,\mu)$ is $\Ufin$-cocycle superrigid. In particular, $\Gamma \actson (X,\mu)$ is cocycle superrigid with arbitrary countable target groups and with target group $S^1$. So, the conclusions follow from Lemma \ref{lemma.assemble}.
\end{proof}

\subsection{Strong rigidity for group von Neumann algebras}

Following up Connes' rigidity conjecture \cite{C80bis}, Jones asked in \cite{Jo00}
whether every isomorphism between property (T) group von Neumann algebras $\rL(G_1)$
and $\rL(G_2)$ essentially comes from an isomorphism between the groups $G_1$
and $G_2$ (cf.\ \cite[Statement 3.2']{P-ICM}). In \cite[Theorem 7.13]{PV1},
we provided a family of (generalized) wreath product groups satisfying such a
strong rigidity result. The following corollary enlarges this family of groups.

\begin{corollary}\label{cor.strong-rigidity-groups}
Consider all group actions $\Gamma \actson I$ on countable sets covered by Theorem \ref{thm.stable-superrigid}. Assume moreover that $\Gamma$ has no finite normal subgroups and that $(\Stab i) \cdot j$ is infinite for all $i,j \in I$ with $i \neq j$. Denote by $\cM$ the family of associated wreath product groups $G = \bigl( \Z / 2\Z \bigr)^{(I)} \rtimes \Gamma$.

If $G_1,G_2 \in \cM$, $t > 0$ and if $\pi : \rL(G_1) \recht \rL(G_2)^t$ is a stable isomorphism between $\rL(G_1)$ and $\rL(G_2)$, then the groups $G_1$, $G_2$ are isomorphic through an isomorphism $\delta : G_1 \recht G_2$, the amplification $t$ equals $1$ and there exist a unitary $w \in \rL(G_2)$ and a group morphism $\om : G_1 \recht S^1$ such that $$\pi(u_g) = \om(g) \; w \, v_{\delta(g)} w^* \quad\text{for all}\;\; g \in G_1 \; .$$
Here, $(u_g)_{g \in G_1}$ and $(v_s)_{s \in G_2}$ denote the natural unitaries generating $\rL(G_1)$ and $\rL(G_2)$.

In particular, the automorphism group of $\rL(G)$ for $G \in \cM$, is generated by $\Aut(G)$, the group of characters $\Char G$ and the inner automorphisms.
\end{corollary}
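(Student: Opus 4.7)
The approach is to pass to a group measure space description of $\rL(G_i)$ and apply Theorem \ref{thm.stable-superrigid} together with Lemma \ref{lemma.assemble}. Setting $A = \Z/2\Z$ and using the self-duality of $A$, Fourier theory gives a canonical identification
$$\rL(G) = \rL^\infty(\{-1,1\}^I) \rtimes \Gamma,$$
in which the canonical unitaries $u_g$, $g \in A^{(I)}$, become the Fourier characters $\chi_g(x) = \prod_i x_i^{g(i)}$, and $\Gamma$ acts by the generalized Bernoulli action associated with $\Gamma \actson I$. Under the hypotheses on $\Gamma \actson I$, this Bernoulli action falls within the scope of Theorem \ref{thm.stable-superrigid} (compare Example \ref{ex.firstex}), so it is free, ergodic, weakly mixing, and stably W$^*$-superrigid.

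First I would apply Lemma \ref{lemma.assemble} to $\pi : \rL^\infty(X_1) \rtimes \Gamma_1 \recht (\rL^\infty(X_2) \rtimes \Gamma_2)^t$ and use that $\Gamma_1,\Gamma_2$ have no finite normal subgroups and that the Bernoulli actions are weakly mixing. In the notation of the lemma, this forces the finite normal subgroup to be trivial and prevents a non-trivial induction (since weakly mixing actions admit no non-trivial finite-dimensional invariant subspaces), so $\Lambda_0 = \Gamma_2$ and $t = 1$. The output is a group isomorphism $\delta_0 : \Gamma_1 \recht \Gamma_2$, a measure space conjugacy $\Delta : X_1 \recht X_2$ intertwining $\delta_0$, and a character $\om_0 : \Gamma_1 \recht S^1$, so that, after conjugating by some $w \in \cU(\rL(G_2))$,
$$\pi(a u_g) = (a \circ \Delta^{-1})\, \om_0(g)\, v_{\delta_0(g)} \quad\text{for all}\;\; a \in \rL^\infty(X_1),\; g \in \Gamma_1.$$

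The remaining task is to upgrade $\delta_0$ to a group isomorphism $\delta : G_1 \recht G_2$ and $\om_0$ to a character $\om : G_1 \recht S^1$ so that $\pi(u_g) = \om(g)\, v_{\delta(g)}$ for every $g \in G_1$. Equivalently, one must show that $\Delta$ sends each Fourier character $\chi_g^{(1)} \in \rL^\infty(X_1)$, $g \in A^{(I_1)}$, to a scalar multiple of a Fourier character of $X_2$. Granting this, the resulting map $\delta_1 : A^{(I_1)} \recht A^{(I_2)}$ is automatically a group isomorphism equivariant for $\delta_0$, and $\delta$ is defined on $G_1 = A^{(I_1)} \rtimes \Gamma_1$ by combining $\delta_0$ and $\delta_1$; the scalar corrections, together with $\om_0$, assemble into $\om$. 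The ``in particular'' statement about $\Aut(\rL(G))$ is then the diagonal case $G_1 = G_2 = G$.

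The main obstacle is this last step: intrinsically recovering the Bernoulli coordinate characters from the conjugacy data. The plan is to follow the strategy of \cite[Theorem 7.13]{PV1}, exploiting the assumption that $(\Stab i) \cdot j$ is infinite for every $i \neq j$. This rigidity-type condition should allow the countable subgroup $\{\chi_g^{(1)} : g \in A^{(I_1)}\} \subset \cU(\rL^\infty(X_1))$ of coordinate characters to be characterized intrinsically inside $\rL^\infty(X_1) \rtimes \Gamma_1$, as a distinguished countable group of trace-zero symmetries that is $\Gamma_1$-invariant and satisfies appropriate orbit-independence and weak-mixing properties on each of its $\Gamma_1$-orbits. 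Since all of these features are preserved by the $*$-isomorphism $\pi$ together with the intertwining $\Delta$, the image of this subgroup must coincide modulo $S^1$ with the analogous subgroup of Fourier characters in $\rL^\infty(X_2)$, which yields $\delta_1$ and the desired character twist.
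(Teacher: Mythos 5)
Your opening moves agree with the paper: identify $\rL(G_i)$ with the crossed product $\rL^\infty(\{-1,1\}^{I_i}) \rtimes \Gamma_i$ of the generalized Bernoulli action, invoke Theorem \ref{thm.stable-superrigid} (via Lemma \ref{lemma.assemble}), use weak mixing and absence of finite normal subgroups to get $t=1$ with no non-trivial induction or finite quotient, and thereby obtain a unitary $w$, a character $\om_0$ on $\Gamma_1$, a group isomorphism $\delta_0 : \Gamma_1 \recht \Gamma_2$, and a measure space conjugacy $\Delta$ intertwining the Bernoulli actions. You also correctly isolate the remaining problem: to show $\Delta$ carries each Fourier character of $(\Z/2\Z)^{(I_1)}$ to a scalar multiple of one in $(\Z/2\Z)^{(I_2)}$.

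Where you diverge from the paper, and where the gap lies, is in the final step. You propose to \emph{intrinsically characterize} the subgroup of Fourier characters inside $\rL^\infty(X_1) \rtimes \Gamma_1$ by some combination of algebraic, spectral, and weak-mixing conditions, following \cite[Theorem 7.13]{PV1}. This is only a sketch and is not carried out; moreover, it is a detour. The point is that after Lemma \ref{lemma.assemble} you already hold a genuine measure-space conjugacy $\Delta$ between the two generalized Bernoulli actions (equivalently, a $*$-isomorphism $\al : A_1 \recht A_2$ equivariant for $\delta_0$). The paper's Lemma \ref{lemma.conjugacy} (from \cite[Theorem 5.4]{PV1}, set up for precisely this purpose and applicable under the assumption that $(\Stab j)\cdot k$ is infinite for all $j \neq k$) then says directly that such a conjugacy is \emph{coordinate-wise}: there is an equivariant bijection $\eta : I_1 \recht I_2$ and isomorphisms of base spaces $\Delta_i : X_0 \recht Y_0$ constant on $\Gamma_1$-orbits in $I_1$ assembling $\Delta$. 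Combined with the elementary fact that $\rL(\Z/2\Z)$ has exactly two automorphisms (the identity and twisting by the non-trivial character), this yields the isomorphism $\delta_1 : (\Z/2\Z)^{(I_1)} \recht (\Z/2\Z)^{(I_2)}$ and the $S^1$-valued twist that assemble into $\delta$ and $\om$ on $G_1$. You never establish any version of this coordinate-wise structure; your ``orbit-independence and weak-mixing properties on each orbit'' argument is left as a hope, and it is not clear it would uniquely pin down the Fourier characters, even up to a scalar twist, without invoking exactly the structural rigidity of Bernoulli conjugacies encapsulated in Lemma \ref{lemma.conjugacy}. So the proof is incomplete at the decisive point, and the tool that closes it is a concrete lemma about conjugacies of generalized Bernoulli actions, not an intrinsic algebraic characterization of the coordinate characters inside the factor.
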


To prove Corollary \ref{cor.strong-rigidity-groups}, we use the following lemma, whose proof is contained in \cite[Theorem 5.4]{PV1}. Observe that Lemma \ref{lemma.conjugacy} implies in particular that, whenever $\Gamma \actson I$ is such that $(\Stab j) \cdot k$ is infinite for all $j \neq k$, the generalized Bernoulli actions $\Gamma \actson (X_0,\mu_0)^I$ and $\Gamma \actson (Y_0,\rho_0)^I$ are conjugate if and only if the base probability spaces are isomorphic. Such a statement is false in general for plain Bernoulli actions $\Gamma \actson (X_0,\mu_0)^\Gamma$, see \cite{stepin}.

\begin{lemma} \label{lemma.conjugacy}
Let $\Gamma_i \actson I_i$, $i = 1,2$. Assume that for both actions, $(\Stab j) \cdot k$ is infinite for all $j \neq k$. Consider the generalized Bernoulli actions $$\Gamma_1 \actson (X,\mu) = (X_0,\mu_0)^{I_1} \quad\text{and}\quad \Gamma_2 \actson (Y,\rho) := (Y_0,\rho_0)^{I_2} \; .$$
If $\Delta : X \recht Y$ is an isomorphism of probability spaces and $\delta : \Gamma_1 \recht \Gamma_2$ an isomorphism of groups such that $\Delta(g \cdot x) = \delta(g) \cdot \Delta(x)$ for all $g \in \Gamma_1$ and almost all $x \in X$, there exists a bijection $\eta : I_1 \recht I_2$ and there exist isomorphisms $\Delta_i : X_0 \recht Y_0$ of the base probability spaces such that
\begin{itemize}
\item $\eta(g \cdot i) = \delta(g) \cdot \eta(i)$ and $\Delta_{g \cdot i} = \Delta_i$ for all $i \in I_1$, $g \in \Gamma_1$,
\item $\bigl(\Delta(x)\bigr)_{\eta(i)} = \Delta_i(x_i)$ for all $i \in I_1$ and almost all $x \in X$.
\end{itemize}
\end{lemma}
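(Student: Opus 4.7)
The plan is to translate the statement into a statement about $*$-isomorphisms of Bernoulli shift algebras, characterize the coordinate subalgebras intrinsically as fixed-point algebras of point stabilizers, and then extract the bijection $\eta$ and the base isomorphisms $\Delta_i$ from this characterization.

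First, identifying $\rL^\infty(X) = \overline{\bigotimes}_{i \in I_1} \rL^\infty(X_0,\mu_0)$ and denoting by $A_i^X$ the $i$-th coordinate factor (similarly $A_j^Y \subset \rL^\infty(Y)$), the measure-theoretic conjugacy yields a trace preserving $*$-isomorphism $\pi := \Delta_* : \rL^\infty(X) \recht \rL^\infty(Y)$ intertwining the Bernoulli actions via $\delta$, i.e.\ $\pi \circ \al_g = \be_{\delta(g)} \circ \pi$. The key algebraic input is the identity $A_i^X = \rL^\infty(X)^{\Stab_{\Gamma_1}(i)}$, valid for every $i \in I_1$. The inclusion $\subseteq$ is immediate; for $\supseteq$, write $\rL^\infty(X) = A_i^X \,\ovt\, B_i$ with $B_i = \overline{\bigotimes}_{i'\neq i} \rL^\infty(X_0)$. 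The group $\Stab i$ acts trivially on $A_i^X$ and by a generalized Bernoulli permutation action on $B_i$; since every $\Stab i$-orbit on $I_1\setminus\{i\}$ is infinite by hypothesis, this action is weakly mixing, so $B_i^{\Stab i} = \C$. The same characterization holds on the $Y$-side.

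Setting $C_i := \pi(A_i^X)$, the equivariance of $\pi$ gives $C_i = \rL^\infty(Y)^{\delta(\Stab i)}$ and $\be_{\delta(g)}(C_i) = C_{g\cdot i}$. For an arbitrary subgroup $H \subset \Gamma_2$, an orbit-by-orbit computation combined with weak mixing on infinite orbits yields
\[
\rL^\infty(Y)^H \;=\; \overline{\bigotimes}_{\mathcal{O}}\; \rL^\infty(Y_0^{\mathcal{O}})^H ,
\]
where $\mathcal{O}$ ranges over the finite $H$-orbits on $I_2$. Applied to $H = \delta(\Stab i)$, I then argue that $C_i$ equals $A_{\eta(i)}^Y$ for a unique $\eta(i) \in I_2$, i.e.\ that $\delta(\Stab i)$ has exactly one finite orbit on $I_2$, that this orbit is a singleton, and that no other finite orbits exist. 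Granting this, a symmetric analysis of $\pi^{-1}$ produces the inverse map, so $\eta : I_1 \recht I_2$ is a bijection; its equivariance falls out from $A_{\eta(g\cdot i)}^Y = C_{g\cdot i} = \be_{\delta(g)}(A_{\eta(i)}^Y) = A_{\delta(g)\cdot\eta(i)}^Y$. Finally, $\pi$ restricts to a trace preserving $*$-isomorphism $A_i^X \recht A_{\eta(i)}^Y$ of diffuse abelian algebras, hence induced by a measure isomorphism $\Delta_i : X_0 \recht Y_0$; the identity $\Delta_{g\cdot i} = \Delta_i$ follows from evaluating $\pi \al_g = \be_{\delta(g)} \pi$ on $A_i^X$ (both $\al_g|_{A_i^X}$ and $\be_{\delta(g)}|_{A_{\eta(i)}^Y}$ are the canonical coordinate-relabelling identifications of $\rL^\infty(X_0)$, respectively $\rL^\infty(Y_0)$, with themselves), and the pointwise formula $(\Delta(x))_{\eta(i)} = \Delta_i(x_i)$ is then the measure-theoretic translation of $\pi|_{A_i^X} = (\Delta_i)_*$.

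The main obstacle is the identification step: pinning down $\eta(i)$ by ruling out non-singleton finite orbits or multiple fixed points of $\delta(\Stab i)$ on $I_2$. The isomorphism type of $C_i \cong \rL^\infty(X_0)$ alone does not detect this (any atomless standard measure algebra looks the same), so one must exploit additional structure. The strategy is to use that $(C_i)_{i \in I_1}$ is a tracially-independent tensor decomposition of $\rL^\infty(Y)$, transferred from the corresponding decomposition of $\rL^\infty(X)$ via $\pi$, together with the symmetric orbit decomposition obtained by applying $\pi^{-1}$ to each $A_j^Y$. Both decompositions of $\rL^\infty(Y)$ are equivariant under the $\Gamma_2$-action (the one by $\delta$-twisted $\Gamma_1$, the other by $\Gamma_2$ itself), and this rigid double structure forces the two decompositions to refine each other one-to-one, producing the required $\eta$.
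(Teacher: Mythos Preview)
The paper does not give its own proof of this lemma: it simply points to \cite[Theorem 5.4]{PV1}. Your fixed-point-algebra strategy is the natural one and is essentially what underlies that reference, so the framework is sound. The identity $A_i^X = \rL^\infty(X)^{\Stab i}$ is correct (your weak-mixing argument is fine), and once $C_i = A_{\eta(i)}^Y$ is established the remaining conclusions follow exactly as you say.

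The problem is that you explicitly flag ``the main obstacle'' and then do not resolve it. The final paragraph about ``rigid double structure'' forcing the two tensor decompositions to ``refine each other one-to-one'' is not an argument; as it stands you only know $C_i \subset \bigotimes_{j\in F_i} A_j^Y$ where $F_i$ is the union of the finite $\delta(\Stab i)$-orbits on $I_2$, and a priori $C_i$ sits inside as a product of diagonal-type invariants over orbits of size $\geq 1$. Independence of $C_i$ and $C_{i'}$ does not obviously force $F_i\cap F_{i'}=\emptyset$ in that generality, and nothing you have written explains why the finite orbits should be singletons.

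Here is the missing step. Suppose $j$ lies in a finite $\delta(\Stab_{\Gamma_1} i)$-orbit, so $H:=\delta(\Stab i)\cap\Stab_{\Gamma_2} j$ has finite index in $\delta(\Stab i)$. Then $\delta^{-1}(H)$ has finite index in $\Stab_{\Gamma_1} i$, and since $[\Stab i:\Stab i\cap\Stab i']=\infty$ for every $i'\neq i$, the same infinite-index statement holds with $\Stab i$ replaced by the finite-index subgroup $\delta^{-1}(H)$. Thus $\{i\}$ is the unique finite $\delta^{-1}(H)$-orbit on $I_1$, giving $\rL^\infty(X)^{\delta^{-1}(H)}=A_i^X$ and hence $\rL^\infty(Y)^H=C_i$. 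Since $H\subset\Stab_{\Gamma_2} j$ we get $A_j^Y\subset C_i=\rL^\infty(Y)^{\delta(\Stab i)}$, which forces $\delta(\Stab i)$ to fix $j$. So every finite orbit is a singleton and $C_i=\bigotimes_{j\in F_i}A_j^Y$ exactly. Now independence of the $C_i$ immediately gives disjointness of the $F_i$ (if $j\in F_i\cap F_{i'}$ then $A_j^Y\subset C_i\cap C_{i'}=\C$), generation gives $\bigcup_i F_i=I_2$, and running the symmetric argument with $\pi^{-1}$ yields a partition $\{G_j\}$ of $I_1$ with $A_j^Y\subset C_i\subset A_{j'}^Y$ forcing $j=j'$ and $|F_i|=1$. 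This is the content your last paragraph was gesturing at.
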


\begin{proof}[Proof of Corollary \ref{cor.strong-rigidity-groups}]
Write $G_i = \bigl(\Z / 2\Z\bigr)^{(I_i)} \rtimes \Lambda_i$ and identify $\rL(G_i) = A_i \rtimes \Lambda_i$, where $A_i = \rL\Bigl(\bigl(\Z / 2\Z\bigr)^{(I_i)}\Bigr)$. Let $\pi : \rL(G_1) \recht \rL(G_2)^t$ be a $*$-isomorphism. By Theorem \ref{thm.stable-superrigid}, we get that $t=1$ and we get the existence of an isomorphism $\delta : G_1 \recht G_2$, a character $\om : \Lambda_1 \recht S^1$ and a unitary $w \in \rL(G_2)$ such that
$$\pi(a u_g) = \om(g) \; w \, \al(a) \, v_{\delta(g)} \, w^* \; ,$$
where $\al : A_1 \recht A_2$ is a $*$-isomorphism satisfying $\al(\si_g(a)) = \si_{\delta(g)}(a)$ for all $g \in \Lambda_1$ and $a \in A_1$.

Lemma \ref{lemma.conjugacy} describes the form of $\pi|_{A_1}$. Since $\rL(\Z/2\Z)$ has precisely two automorphisms, the identity and the multiplication with the non-trivial character on $\Z/2\Z$, we are done.
\end{proof}

\subsection{Counterexamples to W$^*$-superrigidity}

There are many free ergodic p.m.p.\ actions $\Gamma \actson (X,\mu)$ with $\Gamma \in \cG$ and such that $\Gamma \actson (X,\mu)$ is orbit equivalent with a large family of non-conjugate group actions. So, in these cases, the II$_1$ factor $\rL^\infty(X) \rtimes \Gamma$ has many group measure space decompositions (up to conjugacy of the actions), but all of them have the same Cartan subalgebra (up to unitary conjugacy).

First of all, by \cite[P$_\text{\rm ME}$6]{G3}, if $\Gamma_i$ and $\Gamma_i'$ admit orbit equivalent actions ($i=1,2$), then $\Gamma_1 * \Gamma_2$ and $\Gamma_1' * \Gamma_2'$ also admit orbit equivalent actions.

Next, let $\Gamma_1$ be an arbitrary group and $\Gamma_2,\Gamma_2'$ infinite amenable groups. Denote $\Gamma = \Gamma_1 * \Gamma_2$ and $\Gamma' = \Gamma_1 * \Gamma_2'$. By \cite{bowen}, the Bernoulli actions $\Gamma \actson (X_0,\mu_0)^\Gamma$ and $\Gamma' \actson (Y_0,\eta_0)^{\Gamma'}$ are orbit equivalent for all non-trivial base probability spaces $(X_0,\mu_0)$ and $(Y_0,\eta_0)$.

\section{W$^*$-superrigidity for finite index bimodules} \label{sec.bimodules}

By definition, the action $\Gamma \actson (X,\mu)$ is stably W$^*$-superrigid if,
for all free ergodic p.m.p.\ actions $\Lambda \actson (Y,\eta)$,
stable isomorphism of $\rL^\infty(X) \rtimes \Gamma$ and
$\rL^\infty(Y) \rtimes \Lambda$ implies, in a sense,
stable isomorphism of the groups $\Gamma$, $\Lambda$ and their respective actions.

For certain of the actions listed in Theorem \ref{thm.stable-superrigid},
one can go even further and prove that, whenever $\cH$ is a finite index bimodule between $\rL^\infty(X) \rtimes \Gamma$ and $\rL^\infty(Y) \rtimes \Lambda$, the groups $\Gamma$, $\Lambda$ and their actions are, in the following precise sense, virtually isomorphic, with $\cH$ being implemented by this virtual conjugacy and a finite dimensional unitary representation.

However, the existence of $g_1,\ldots,g_n \in \Gamma$ with $\bigcap_{i=1}^n g_i \Sigma g_i^{-1}$ finite, does not provide sufficient absence of normality of $\Sigma < \Gamma$. Therefore, we make in the following theorem, a much stronger (and certainly non-optimal) assumption.

\begin{theorem}\label{thm.bimodules}
Let $\Gamma \actson (X,\mu)$ be any of the actions listed in Theorem \ref{thm.stable-superrigid} and assume that $\Gamma$ admits an infinite index subgroup $G$ such that $g \Sigma g^{-1} \cap \Sigma$ is finite for all $g \in \Gamma - G$. If $\Lambda \actson (Y,\eta)$ is an arbitrary free ergodic p.m.p.\ action and $\cH$ is a finite index $\bigl(\rL^\infty(X) \rtimes \Gamma\bigr)$--$\bigl(\rL^\infty(Y) \rtimes \Lambda\bigr)$--bimodule, there exist
\begin{itemize}
\item a finite index subgroup $\Gamma' < \Gamma$,
\item a finite subgroup $H < \Aut(X,\mu)$ such that $g H g^{-1} = H$ for all $g \in \Gamma'$,
\item finite index subgroups $\Lambda\dpr < \Lambda' < \Lambda$ with $\Lambda \actson Y$ being induced from $\Lambda' \actson Y'$ for some $Y' \subset Y$,
\end{itemize}
such that $\Lambda\dpr \cong \frac{\Gamma'}{\Gamma' \cap H}$ and such that the action $\Lambda\dpr \actson Y'$ is conjugate with $\frac{\Gamma'}{\Gamma' \cap H} \actson \frac{X}{H}$, through an isomorphism $\Delta : \frac{X}{H} \recht Y'$ of probability spaces and a group isomorphism $\delta : \frac{\Gamma'}{\Gamma' \cap H} \recht \Lambda\dpr$.

Moreover, if $\cH$ is irreducible, there exists an irreducible finite dimensional unitary representation $\pi : \Gamma' \recht \cU(\C^k)$ such that $\cH$ is the composition of
\begin{itemize}
\item the \mybim{\rL^\infty(X) \rtimes \Gamma}{\rL^\infty(X) \rtimes \Gamma'} on the Hilbert space $\M_{1,k}(\C) \ot \rL^2(M)$, where $M = \rL^\infty(X) \rtimes \Gamma$ and where
\begin{equation}\label{eq.firstbim}
z \cdot \xi \cdot (a u_g) = z \xi (\pi(g) \ot a u_g) \quad\text{for all}\;\; z \in M, a \in \rL^\infty(X), g \in \Gamma' \; ,
\end{equation}
\item the natural \mybim{\rL^\infty(X) \rtimes \Gamma'}{\rL^\infty(\frac{X}{H}) \rtimes \frac{\Gamma'}{\Gamma' \cap H}} on the Hilbert space $\rL^2(M_0) p_{\Gamma' \cap H}$, where $$M_0 = \rL^\infty(X) \rtimes \Gamma' \quad\text{and}\quad p_{\Gamma' \cap H} = |\Gamma' \cap H|^{-1} \sum_{g \in \Gamma' \cap H} u_g \; , $$
\item the \mybim{\rL^\infty(\frac{X}{H}) \rtimes \frac{\Gamma'}{\Gamma' \cap H}}{\rL^\infty(Y) \rtimes \Lambda} on the Hilbert space $\rL^2(N)$, where $N = \rL^\infty(Y) \rtimes \Lambda$ and where
$$(a u_g) \cdot \xi \cdot z = (a \circ \Delta^{-1}) u_{\delta(g)} \xi z \quad\text{for all}\;\; a \in \rL^\infty({\textstyle \frac{X}{H}}), g \in {\textstyle \frac{\Gamma'}{\Gamma' \cap H}} , z \in N \; .$$
\end{itemize}
\end{theorem}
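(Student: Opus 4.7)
The plan is to reduce the study of the bimodule to an algebraic inclusion problem, apply a relative version of Theorem~\ref{thm.uniqueCartan} to locate the Cartan subalgebra of $N := \rL^\infty(Y)\rtimes\Lambda$ inside an amplification of $M := \rL^\infty(X)\rtimes\Gamma$, and then use the cocycle superrigidity machinery together with Lemma~\ref{lemma.weakcocycle} to upgrade the resulting stable orbit equivalence-with-twist to the explicit form stated in the theorem.

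First I would realize the bimodule algebraically. A finite index $M$-$N$-bimodule is unitarily equivalent to one of the form $\cH^\theta$ coming from a trace-preserving $*$-homomorphism $\theta : N \to p(\M_n(\C)\ot M)p$ with $[p(\M_n(\C)\ot M)p:\theta(N)]<\infty$; irreducibility of $\cH$ means the inclusion is irreducible. Writing $B:=\theta(\rL^\infty(Y))$ and $\Lambdatil:=\theta(\Lambda)$, we have a group-measure-space subfactor $B\rtimes\Lambdatil=\theta(N)\subset p(\M_n(\C)\ot M)p$, so that $B$ is a Cartan of $\theta(N)$ but a priori not of the ambient amplification.

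Second, I would prove the relative analogue of Theorem~\ref{thm.uniqueCartan} for such subfactor situations. The two transfer lemmas (Lemmas~\ref{lemma.crucial} and~\ref{lemma.spectralgap}) are applied with $M$ replaced by the amplification and with the pair $(Q,\Lambda)$ replaced by $(B,\Lambdatil)$; since the deformations $\m_\rho$ and $\al_t$ are defined on all of $M^n$ (resp.\ $\Mtil^n$) and only act on $\theta(N)$ by restriction, their hypotheses still hold. Combining this with the combinatorial Lemma~\ref{lemma.combinatorial} and Lemma~\ref{lem.lengths} (applied to the unitaries $v_{s_k}\in B\rtimes\Lambdatil\subset pM^np$), Theorem~\ref{thm.IPP} yields $B\prec_{M^n} \rL^\infty(X)\rtimes\Sigma$; the alternative $\cN_{pM^np}(B)''\prec M_i$ is ruled out because $\cN_{pM^np}(B)''$ contains $\theta(N)$, hence has finite index in $pM^np$, while $\rL^\infty(X)\rtimes\Gamma_i$ sits inside $M$ with infinite index. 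Invoking condition~3 of Definition~\ref{def.G} via \cite[Theorem 6.16]{PV1} upgrades this to $B\prec_{M^n} \rL^\infty(X)$.

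Third, I would convert this intertwining into a concrete conjugacy up to a finite-dimensional defect. Using a relative version of \cite[Theorem A.1]{PBetti} inside the finite index inclusion $\theta(N)\subset pM^np$, after a unitary conjugation one can arrange that $B$ sits inside a corner of $\D_k(\C)\ot\rL^\infty(X)$ (for some $k$ related to the Jones index) in such a way that the normalizing action of $\Lambdatil$ on $B$ is compatible with the orbit equivalence relation $\cR(\Gamma\actson X)$ on $X$. This is where the strong malnormality assumption ``$g\Sigma g^{-1}\cap\Sigma$ finite for $g\in\Gamma-G$ with $[\Gamma:G]=\infty$'' enters decisively: it forbids further intertwinings into proper corners and forces the resulting identification to be essentially groupoid-theoretic up to a finite inner ambiguity (the group $H$ of the statement), while the extra index $k$ materializes as a projective unitary representation.

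Fourth, reading off the structure of the inclusion on the two Cartan groupoids, I would extract: a finite index subgroup $\Gamma'<\Gamma$, a finite subgroup $H<\Aut(X,\mu)$ normalized by $\Gamma'$, a finite index subgroup $\Lambdatil\dpr=\theta(\Lambda\dpr)<\Lambdatil$, a measure space isomorphism $\Delta:X/H\to Y'$ intertwining $\Gamma'/(\Gamma'\cap H)$ and $\Lambda\dpr$ via some $\delta$, a scalar $2$-cocycle $\Om$ on $\Gamma'$, and a projective unitary representation $\pi:\Gamma'\to\cU(k)$ with $2$-cocycle $\Om$, such that $\cH$ factorizes as the composition of three bimodules differing from the stated ones only by a twist encoded in $(\Om,\pi)$ and a $1$-cocycle $\om:\Gamma'\times X\to S^1$ arising from the freedom in the Pimsner--Popa reconstruction.

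Fifth, I would kill the twist by cocycle superrigidity. By Lemma~\ref{lemma.permanence} and Theorem~\ref{thm.cocyclesuperrigid}, the restricted action $\Gamma'\actson(X,\mu)$ is $\Ufin$-cocycle superrigid; Lemma~\ref{lemma.weakcocycle} applied to $\om$ (together with $\Om$) then gives that $\Om$ is a coboundary, so $\pi$ untwists to an honest unitary representation $\Gamma'\to\cU(k)$, and that $\om$ is cohomologous to a character. Absorbing the character and the coboundary into a unitary conjugation of $\cH$ puts the bimodule in the announced form \eqref{eq.firstbim} together with the two induction/restriction bimodules. In the irreducible case, the unitary representation produced is automatically irreducible since the commutant of $\cH$ must be trivial.

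The main obstacle is the third step: upgrading $B\prec_{M^n}\rL^\infty(X)$ to a concrete positioning that is \emph{simultaneously} compatible with the action of $\Lambdatil$ and explains the Jones index as the dimension of a projective representation. The point is that $B$ is only Cartan in $\theta(N)$, not in $pM^np$, and the ``defect'' between $\theta(N)$ and $pM^np$ is exactly the datum that must be captured by the pair $(H,\pi)$. Controlling this defect requires both the finite index of $\theta(N)\subset pM^np$ (to keep normalizers large) and the strong malnormality of $\Sigma$ in $\Gamma$ (to prevent the normalizers from spilling into subgroup corners where a different kind of rigidity would be required).
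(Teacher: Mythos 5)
Your broad outline --- realize $\cH$ via a finite index inclusion $\theta:N\to pM^np$, run the transfer lemmas plus Theorem~\ref{thm.IPP}, extract a twisted orbit equivalence, and untwist via cocycle superrigidity and Lemma~\ref{lemma.weakcocycle} --- is essentially the paper's route. But the key step of upgrading $\theta(B)\prec_M A\rtimes\Sigma$ to $\theta(B)\prec_M A$ cannot be done by invoking condition~3 of Definition~\ref{def.G} via \cite[Theorem 6.16]{PV1}: that argument requires $\theta(B)$ to be a Cartan subalgebra of all of $pM^np$, whereas here it is Cartan only in the proper finite index subfactor $\theta(N)$. This is precisely where the \emph{stronger} malnormality hypothesis (the infinite index subgroup $G$ with $g\Sigma g^{-1}\cap\Sigma$ finite for $g\in\Gamma\setminus G$) must be used: by regularity of $B$ in $N$ and \cite[Lemma 4.2]{V-bim}, if $\theta(B)\not\prec_M A$ then $\theta(N)\prec_M A\rtimes G$, which contradicts the finiteness of the index $[\,pM^np:\theta(N)\,]$ against the infiniteness of $[\Gamma:G]$. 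You noticed the strong malnormality matters, but misdiagnosed \emph{where} it enters --- it acts at this intertwining step, not in some later ``positioning'' step.

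Your step 3 is also where the real technical work lies, and it is not filled by a ``relative version of \cite[Theorem A.1]{PBetti}'', which is a conjugacy statement for Cartan subalgebras of a single II$_1$ factor. After obtaining $\theta(B)\prec_M A$, the paper flips orientation to a finite index inclusion $\psi:M\to qN^kq$ with $B\prec_N\psi(A)$, takes relative commutants (\cite[Lemma 3.5]{V-bim}) to get $\psi(A)\prec_N B$, and then appeals to \cite[Lemma 6.5]{V-bim} and the first part of the proof of \cite[Theorem 6.4]{V-bim} --- nontrivial machinery producing the finite index $\Gamma'<\Gamma$, the irreducible projective representation $\pi:\Gamma'\to\cU(\C^k)$ with obstruction $2$-cocycle $\Omega$, and the inclusion $\psi':A\rtimes_\Omega\Gamma'\to q'N^kq'$ with $\psi'(A)=(\D_k(\C)\ot B)q'$. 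Finally, your plan omits Lemma~\ref{lemma.finitequotient}, which is the separate ingredient (cocycle superrigidity with \emph{finite} target groups) needed at the end to turn the $m$-to-$1$ quotient map $\Delta:X\to Y'$ into the finite normal subgroup $H<\Aut(X,\mu)$ and the conjugacy $\frac{\Gamma'}{\Gamma'\cap H}\to\Lambda\dpr$ asserted in the statement; this is distinct from the $\Ufin$-cocycle superrigidity used to untwist $\Omega$ and the $1$-cocycle $\om$.
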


\begin{example} \label{ex.bimodules}
Let $\Sigma$ be an amenable group and $\Gamma = \Gamma_1 *_\Sigma \Gamma_2$. Assume that $\Gamma \actson I$ with $\Sigma \cdot i$ being infinite for every $i \in I$. If one of the following conditions is satisfied, the generalized Bernoulli action $\Gamma \actson (X_0,\mu_0)^I$ satisfies the conclusions of Theorem \ref{thm.bimodules}.
\begin{enumerate}
\item Let $\Sigma = \Z$ and embed $\Sigma$ into $\Gamma_1 = \SL(n,\Z)$, $n \geq 3$, in the upper right corner. Assume that $\Sigma \lhd \Gamma_2$ is a proper normal subgroup. Then, Theorem \ref{thm.bimodules} applies by putting $G = G_0 *_\Sigma \Gamma_2$, where $G_0$ consists of those matrices $A$ with $A_{i1} = 0$ for all $2 \leq i \leq n$.
\item Let $\Sigma$ be a common subgroup of $\Lambda_0,\Lambda_1,\Gamma_2$ and assume that $\Sigma$ is normal in $\Gamma_2$, $\Sigma \neq \Gamma_2$ and $\Lambda_0$ is non-amenable. Let $\Lambda_2$ be an arbitrary non-trivial group. Put $\Gamma_1 = \Lambda_0 \times (\Lambda_1 * \Lambda_2)$ and view $\Sigma < \Gamma_1$ diagonally. Theorem \ref{thm.bimodules} applies by putting $G = (\Lambda_0 \times \Lambda_1) *_\Sigma \Gamma_2$. Assume that $\Lambda_0$ or $\Lambda_1 * \Lambda_2$ acts with infinite orbits on $I$.
\end{enumerate}
\end{example}

In the proof of Theorem \ref{thm.bimodules} we make use of the following lemma having some independent interest.

\begin{lemma}\label{lemma.finitequotient}
Let $\Gamma \actson (X,\mu)$ and $\Lambda \actson (Y,\eta)$ be free p.m.p.\ actions. Assume that $\Gamma \actson (X,\mu)$ is cocycle superrigid with finite target groups and that all finite index subgroups of $\Gamma$ act ergodically on $(X,\mu)$. Let $\Delta : X \recht Y$ be an $m$-to-$1$ measure preserving map satisfying $\Delta(g \cdot x) = \delta(g) \cdot x$ for some surjective group morphism $\delta : \Gamma \recht \Lambda$.

Then there exists a group $H$ with $m$ elements and a free p.m.p.\ action $H \actson (X,\mu)$ such that, viewing both $\Gamma$ and $H$ as subgroups of $\Aut(X,\mu)$, we have
\begin{itemize}
\item $g H g^{-1} = H$ for all $g \in \Gamma$,
\item $\Ker \delta = \Gamma \cap H$,
\item $\Delta$ is the composition of the canonical quotient map $X \recht \frac{X}{H}$ with a conjugacy of the actions $\frac{\Gamma}{\Gamma \cap H} \actson \frac{X}{H}$ and $\Lambda \actson Y$.
\end{itemize}
\end{lemma}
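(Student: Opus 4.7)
My plan is to exploit cocycle superrigidity of $\Gamma \actson (X,\mu)$ with finite target groups, applied to a permutation-valued cocycle encoding the monodromy of $\Delta$. First, I choose a measurable partition $X = X_1 \sqcup \cdots \sqcup X_m$ with $\Delta|_{X_i}$ a measurable isomorphism onto $Y$, giving a labeling $\ell : X \to \{1,\ldots,m\}$ and, for each $y \in Y$, an enumeration $\Delta^{-1}(y) = \{x_1^y,\ldots,x_m^y\}$ with $x_i^y \in X_i$. Define $\om : \Gamma \times X \to S_m$ by $\om(g,x)(i) := \ell(g \cdot x_i^{\Delta(x)})$; then $\om(g,x)$ depends only on $\Delta(x)$ and satisfies the cocycle identity. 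Cocycle superrigidity provides a measurable $\phi : X \to S_m$ and a group morphism $\pi : \Gamma \to S_m$ with $\om(g,x) = \phi(g\cdot x)\pi(g)\phi(x)^{-1}$, and ergodicity of finite index subgroups of $\Gamma$ forces $\pi(\Gamma)$ to act transitively on $\{1,\ldots,m\}$ (otherwise the $\pi(\Gamma)$-orbit decomposition would yield non-trivial $\Gamma$-invariant subsets in a suitable quotient, contradicting ergodicity).

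The next step, which I expect to be the main obstacle, is to use the data $(\phi,\pi)$ to produce a regular subgroup $H_0 \leq S_m$ normalized by $\pi(\Gamma)$ and containing the image of $\Ker\delta$, such that the fiberwise permutation action of $H_0$ on labels lifts to a well-defined measure preserving action of $H_0$ on $X$. The key input is that, since $\om(g,x)$ depends only on $\Delta(x)$, for any $x,x'$ in a common fiber one has $\phi(gx)^{-1}\phi(gx') = \pi(g)\bigl(\phi(x)^{-1}\phi(x')\bigr)\pi(g)^{-1}$; hence the ``relative position'' map $(x,x') \mapsto \phi(x)^{-1}\phi(x')$ is $\Gamma$-equivariant (with $\Gamma$ acting on $S_m$ by $\pi$-conjugation) and, by ergodicity, takes values in a single $\pi(\Gamma)$-conjugacy orbit. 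A careful analysis of this orbit, combined with the injectivity of the fiberwise representation of $\Ker \delta$ that follows from freeness of $\Gamma \actson X$, allows one to extract the desired regular $H_0 \subset S_m$.

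Once $H_0$ is constructed, I set $H := H_0$ and define its action on $X$ by $h \cdot x := x_{h(\ell(x))}^{\Delta(x)}$, that is, $h$ permutes within each fiber according to its action on labels. Regularity of $H_0$ on $\{1,\ldots,m\}$ makes this a free p.m.p.\ action whose orbits are exactly the $\Delta$-fibers, so the quotient map $X \to X/H$ can be identified with $\Delta$ followed by a measure space isomorphism $X/H \cong Y$, and the induced action $\tfrac{\Gamma}{\Gamma \cap H} \actson X/H$ is conjugate to $\Lambda \actson Y$ via the isomorphism induced by $\delta$. The normalization $gHg^{-1} = H$ in $\Aut(X,\mu)$ follows from $\pi(\Gamma) \subset N_{S_m}(H_0)$ via the straightening identity, since conjugation by $g$ on a fiberwise permutation implements precisely the conjugation by $\pi(g)$.

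Finally, $\Gamma \cap H = \Ker \delta$: any $g \in \Gamma \cap H$ preserves $\Delta$-fibers, so $\delta(g) \cdot y = y$ for a.e.\ $y \in Y$, which by freeness of $\Lambda \actson Y$ forces $g \in \Ker \delta$; conversely, an element of $\Ker \delta$ acts on $X$ by a fiber-preserving measure preserving automorphism, and by the injectivity of its fiberwise representation (obtained from freeness of $\Gamma \actson X$) it lies inside the group of fiberwise deck transformations, which by construction is $H$. Modulo the technical construction of $H_0$ in the middle step, everything else is routine bookkeeping.
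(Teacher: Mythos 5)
Your setup is the same as the paper's: the same $S_m$-valued cocycle $\om(g,x)(i) = \ell(g\cdot\Delta_i^{-1}(\Delta(x)))$, the same application of cocycle superrigidity to get a straightening $\phi : X \to S_m$ and a homomorphism $\pi : \Gamma \to S_m$. But from there your route diverges, and the divergence is where the gap opens.

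You acknowledge yourself that producing a ``regular subgroup $H_0 \leq S_m$ normalized by $\pi(\Gamma)$'' is the main obstacle, and you do not complete that step. But the more serious issue is that even if you had such an $H_0$, your proposed lift $h\cdot x := x^{\Delta(x)}_{h(\ell(x))}$ forgets the straightening map $\phi$ entirely, and therefore will not satisfy $gHg^{-1}=H$. Indeed, a direct computation shows that conjugating your $h$-action by $g\in\Gamma$ produces the fiberwise permutation $\om(g,g^{-1}x)\,h\,\om(g,g^{-1}x)^{-1}$; with the straightening $\om(g,y)=\phi(gy)\pi(g)\phi(y)^{-1}$, this is $\phi(x)\pi(g)\phi(g^{-1}x)^{-1}\,h\,\phi(g^{-1}x)\pi(g)^{-1}\phi(x)^{-1}$, which is \emph{not} the conjugate of $h$ by $\pi(g)$ unless $\phi$ is constant. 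So normalization fails.

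The paper sidesteps both problems by never trying to identify $H$ with a fixed subgroup of $S_m$. Instead it defines, for each $i\in\{1,\dots,m\}$, the map $T_i(x) := \Delta_{\vphi(x)i}^{-1}(\Delta(x))$, which builds the straightening $\vphi$ directly into the fiberwise permutation. The cocycle relation then reads $g\cdot T_i(x) = T_{\eta(g)i}(g\cdot x)$, so each $T_i$ commutes with $\Ker\eta$; since $\Ker\eta$ is of finite index and hence acts ergodically by hypothesis, the range of the locally m.p.\ map $T_i$ is all of $X$, i.e.\ $T_i$ is an honest m.p.\ automorphism. The paper then shows the $T_i$ are pairwise distinct, that their graphs partition the fiber relation $\cR$, and that the group $H$ they generate both commutes with the ergodic $\Ker\eta$-action (hence acts freely) and has all its graphs inside $\cR$ (hence $|H|\leq m$); combined with $|H|\geq m$ this forces $H=\{T_1,\dots,T_m\}$, and the whole identification with $S_m$ is never needed. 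To repair your argument you would have to both finish the construction of $H_0$ and insert $\phi$ into the lifted action in the way the paper's $T_i$ do; at that point you would essentially be reproducing the paper's proof.
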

\begin{proof}
Partition $X$ into disjoint measurable subsets $X_1,\ldots,X_m$ such that the restrictions $\Delta_i := \Delta_{|X_i}$ are isomorphisms of $X_i$ onto $Y$ scaling the measure by the factor $1/m$.

Put $\cJ := \{1,\ldots,m\}$. For $g \in \Gamma$ and almost every $x \in X$, define the map $\om(g,x) : \cJ \recht \cJ$ such that
$$\om(g,x) i = j \quad\text{if and only if}\quad g \cdot \Delta_i^{-1}(\Delta(x)) \in X_j \; .$$
So, by construction
$$g \cdot \Delta_i^{-1}(\Delta(x)) = \Delta_{\om(g,x)i}^{-1}(\Delta(g \cdot x))$$
almost everywhere. It follows that $\om(gh,x) = \om(g,h\cdot x) \circ \om(h,x)$ almost everywhere.
Hence, $\om(g,x)$ is a permutation of $\cJ$ and $\om$ is a $1$-cocycle for the action $\Gamma \actson X$ with values in the permutation group $S_m$. Since $\Gamma \actson (X,\mu)$ is assumed to be cocycle superrigid with finite target groups, we find a measurable map $\vphi : X \recht S_m$ and a group morphism $\eta : \Gamma \recht S_m$ such that, writing
$$T_i : X \recht X : T_i(x) = \Delta_{\vphi(x)i}^{-1}(\Delta(x)) \; ,$$
we have $g \cdot T_i(x) = T_{\eta(g)i}(g \cdot x)$ almost everywhere. By construction, every $T_i$ is locally a m.p.\ isomorphism. Moreover, the range of $T_i$ is globally invariant under the finite index subgroup $\Ker \eta < \Gamma$. Hence, all $T_i$ are m.p.\ automorphisms of $(X,\mu)$.

Define the equivalence relation $\cR$ on $X$ as $\cR := \{(x,y) \in X \times X \mid \Delta(x) = \Delta(y)\}$. By construction, every equivalence class has $m$ elements and the graph of every $T_i$ belongs to $\cR$. Finally, if $T_i(x) = T_j(x)$ for all $x$ in a non-negligible subset $\cU$, we can make $\cU$ smaller and assume moreover that $\vphi(x) = \si$ for all $x \in \cU$. Hence, $\Delta_{\si i}^{-1}(\Delta(x)) = \Delta_{\si j}^{-1}(\Delta(x))$ for all $x \in \cU$. So, $\si i = \si j$ and hence, $i = j$. We have shown that $\cR$ is the disjoint union of the graphs of the $T_i$, $i=1,\ldots,m$.

Define $H < \Aut(X,\mu)$ as the group generated by $T_1,\ldots,T_m$. By construction, all $h \in H$ commute with $\Ker \eta$. Since $\Ker \eta$ acts ergodically on $(X,\mu)$, it follows that $H$ acts freely on $(X,\mu)$. Since the orbit equivalence relation of $H \actson X$ is contained in $\cR$, we get that $|H| \leq m$. But $H$ contains the distinct elements $T_1,\ldots,T_m$, so that $H = \{T_1,\ldots,T_m\}$.

It remains to prove that $\Ker \delta = \Gamma \cap H$. Since $\Delta(g \cdot x) = \delta(g) \cdot \Delta(x)$ and $\Lambda$ acts freely on $Y$, it follows that an element $g \in \Gamma$ belongs to $\Ker \delta$ if and only if the graph of $g$ belongs to $\cR$. It follows that $|\Ker \delta| \leq m$ and that $\Gamma \cap H \subset \Ker \delta$. Conversely, if $g \in \Ker \delta$, take $i$ such that the graphs of $g$ and $T_i$ intersect non-negligibly. Since $\Ker \delta$ is a finite normal subgroup of $\Gamma$, it follows that $g$ commutes with a finite index subgroup of $\Gamma$. We have already seen that also $T_i$ commutes with a finite index subgroup of $\Gamma$. It follows that $g$ and $T_i$ coincide almost everywhere, i.e.\ $g \in \Gamma \cap H$.
\end{proof}

\begin{proof}[Proof of Theorem \ref{thm.bimodules}]
As a preliminary step, note that Lemmas \ref{lemma.crucial} and \ref{lemma.spectralgap} remain valid when we identify $Q \rtimes \Lambda$ with a finite index subfactor of $pMp$. This generalization almost has the same proof and we explain this briefly for the case of Lemma \ref{lemma.crucial}. The deformation $\vphi_n$ of $M$ still allows to define the completely positive maps $\vphitil_n$ on $\Lambda$, which in turn lead to a new deformation $\theta_n$ of $Q \rtimes \Lambda$. By Jones' tunnel construction and for the appropriate value of $s > 0$, we can view the amplification $M^s$ as a finite index subfactor of $Q \rtimes \Lambda$. Hence, we find inside $Q \rtimes \Lambda$ a von Neumann subalgebra $M_0$ with the relative property (T) and without injective direct summand. So, we can finish the proof in the same way as for the original Lemma \ref{lemma.crucial}.

Write $A = \rL^\infty(X)$ and $M = A \rtimes \Gamma$. Put $B = \rL^\infty(Y)$ and $N = B \rtimes \Lambda$. We may assume that $\bim{M}{\cH}{N}$ is an irreducible finite index bimodule. Let $\gamma : N \recht pM^m p$ be a finite index inclusion such that
$$\bim{M}{\cH}{N} \cong \bim{M}{\bigl( (\M_{1,m}(\C) \ot \rL^2(M))p \bigr)}{\gamma(N)} \; .$$
View $M^m$ as the crossed product $(\M_m(\C) \ot A) \rtimes \Gamma$ where $\Gamma$ acts trivially on $\M_m(\C)$.
As in Section \ref{sec.deformation-AFP}, define for every $0 < \rho < 1$, the unital completely positive map $\m_\rho$ on $M^m$ by $\m_\rho(a u_g) = \rho^{|g|} a u_g$ for all $a \in \M_m(\C) \ot A$, $g \in \Gamma$.

In the situation where $\Gamma_1$ has a non-amenable subgroup with the relative property (T), apply Lemma \ref{lemma.crucial} -- as generalized in the first paragraph of the proof -- to the completely positive maps $\m_\rho$, $\rho \recht 1$ and the von Neumann subalgebra $P = (A \rtimes \Sigma)^m$. In the situation where $\Gamma_1$ has two non-amenable commuting subgroups $H_1,H_2$, we apply the generalized Lemma \ref{lemma.spectralgap}. Put $\eps = (\operatorname{tr} \ot \tau)(p)/2072$. Combined with \eqref{eq.some-estimate}, we always find $0 < \rho < 1$ and a sequence $s_k \in \Lambda$ such that
\begin{itemize}
\item $\|\gamma(v_{s_k}) - \m_\rho(\gamma(v_{s_k}))\|_2 \leq \eps/2$ for all $k$.
\item $\|E_P(x \gamma(v_{s_k}) y)\|_2 \recht 0$ for all $x,y \in M$.
\end{itemize}
By Lemma \ref{lem.lengths} we get a $0 < \rho < 1$ and a $\delta > 0$ such that $\tau(\gamma(w)^* \m_\rho(\gamma(w))) \geq \delta$ for all $w \in \cU(B)$. By Theorem \ref{thm.IPP} we have that $\gamma(B) \embed{M} A \rtimes \Sigma$ or that $\cN_{pM^m p}(\gamma(B))\dpr \embed{M} A \rtimes \Gamma_i$ for some $i=1,2$. But $\cN_{pM^m p}(\gamma(B))$ contains $\gamma(N)$ which has finite index in $pM^m p$. Since $\Gamma_i < \Gamma$ has infinite index, this rules out the possibility that $\cN_{pM^m p}(\gamma(B))\dpr \embed{M} A \rtimes \Gamma_i$. So, we have shown that $\gamma(B) \embed{M} A \rtimes \Sigma$.

We claim that $\gamma(B) \embed{M} A$. Assume the contrary. Our assumption on $G$, together with the regularity of $B \subset N$, implies that $\gamma(N) \embed{M} A \rtimes G$ (cf.\ \cite[Lemma 4.2]{V-bim}). Since $\gamma(N) \subset pM^m p$ is of finite index, while $G < \Gamma$ has infinite index, this is a contradiction. So, we have shown that $\gamma(B) \embed{M} A$.

In bimodule language, this means that $\cH$ admits a non-zero $A$-$B$-subbimodule $\cK$ which is finitely generated as a left $A$-module. Take a finite index inclusion $\psi : M \recht q N^k q$ such that
$$\bim{M}{\cH}{N} \cong \bim{\psi(M)}{q (\C^k \ot \rL^2(N))}{N} \; .$$
The existence of $\cK$ means that $B \embed{N} \psi(A)$. Taking relative commutants (see \cite[Lemma 3.5]{V-bim}), it follows that $\psi(A) \embed{N} B$.

By Theorem \ref{thm.cocyclesuperrigid} the action $\Gamma' \actson X$ is $\Ufin$-cocycle superrigid for every finite index subgroup $\Gamma' < \Gamma$. Therefore, we are in a situation where we can apply \cite[Lemma 6.5]{V-bim} as well as the first part of the proof of \cite[Theorem 6.4]{V-bim}. Denote by $\D_k(\C) \subset \M_k(\C)$ the algebra of diagonal matrices. As a result, we get
\begin{itemize}
\item a finite index subgroup $\Gamma' < \Gamma$ and an irreducible projective representation $\pi : \Gamma' \recht \cU(\C^k)$ with obstruction $2$-cocycle $\Omega : \Gamma' \times \Gamma' \recht S^1$ given by $\pi(g) \pi(h) = \Omega(g,h) \pi(gh)$,
\item a projection $q' \in \D_k(\C) \ot B$ and a finite index inclusion $\psi' : A \rtimes_\Omega \Gamma' \recht q' N^k q'$ satisfying $\psi'(A) = (\D_k(\C) \ot B)q'$,
\end{itemize}
such that $\cH$ is the composition of the \mybim{A \rtimes \Gamma}{A \rtimes_\Omega \Gamma'} given by \eqref{eq.firstbim} and the $\bigl(A \rtimes_\Omega \Gamma'\bigr)$--$N$--bimodule given by
$$\bim{\psi'\bigl(A \rtimes_\Omega \Gamma'\bigr)}{q'(\C^k \ot \rL^2(N))}{N}\; .$$
Denote $\Lambdatil = \frac{\Z}{k\Z} \times \Lambda$ and $\Ytil = \frac{\Z}{k\Z} \times Y$. Consider the natural action $\Lambdatil \actson \Ytil$ and define $Z \subset \Ytil$ such that $q' = \chi_{Z}$. Denote by $T : X \recht Z$ the isomorphism of measure spaces determined by $\psi'(a) = a \circ T^{-1}$ for all $a \in A$. Normalize the measure $\eta$ on $Y$ in such a way that $Z$ has measure $1$ and hence, $T$ is measure preserving.

By construction, $T(\Gamma' \cdot x) \subset \Lambdatil \cdot T(x)$ for almost all $x \in X$. So, the formula
$$T(g \cdot x) = \om(g,x) \cdot T(x)$$
defines a $1$-cocycle for the action $\Gamma' \actson X$ with values in $\Lambdatil$. By Theorem \ref{thm.cocyclesuperrigid}, $\Gamma' \actson X$ is $\Ufin$-cocycle superrigid. So, we find a measurable map $\vphi : X \recht \Lambdatil$ and a group morphism $\delta : \Gamma' \recht \Lambdatil$ such that, writing $\Delta(x) := \vphi(x) \cdot T(x)$, we have $\Delta(g \cdot x) = \delta(g) \cdot \Delta(x)$ almost everywhere. Since $\Gamma' \actson X$ is weakly mixing, we find $i \in \frac{\Z}{k\Z}$ such that $\Delta(x) \in \{i\} \times Y$ for almost all $x \in X$. Hence, $\delta(\Gamma') \subset \{1\} \times \Lambda$. From now on, we view $\Delta$ as a map from $X$ to $Y$ and $\delta$ as a group morphism from $\Gamma'$ to $\Lambda$. Put $\Lambda\dpr := \delta(\Gamma')$.

By construction, $\Delta$ is locally a m.p.\ isomorphism, meaning that we can partition $X$ into a sequence of measurable subsets $X_n$ such that the restriction of $\Delta$ to each of the $X_n$ is a m.p.\ isomorphism of $X_n$ onto a subset of $Y$. Since $\Delta(g \cdot x) = \delta(g) \cdot \Delta(x)$ and since $(X,\mu)$ is a probability space, it follows that $\Ker \delta$ is finite and that $\Delta$ is an $m$-to-$1$ quotient map onto $Y' \subset Y$ (cf.\ \cite[Theorem 1.8]{furman-on-popa}). Observe that $Y'$ is globally $\Lambda\dpr$-invariant. Partition $X$ into subsets $X_1,\ldots,X_m$ of measure $1/m$ such that $\Delta_i := \Delta_{|X_i}$ is an isomorphism of $X_i$ onto $Y'$.

Define the projection $p_{\Ker \delta}$ in $A \rtimes \Gamma'$ by the formula
$$p_{\Ker \delta} := |\Ker \delta|^{-1} \sum_{g \in \Ker \delta} u_g \; .$$
Put $B_1 := \rL^\infty(Y') = \rL^\infty(Y) q_1$, where $q_1 : = \chi_{Y'}$.
The pair $\Delta,\delta$ yields a natural \mybim{A \rtimes \Gamma'}{B_1 \rtimes \Lambda\dpr} structure on the Hilbert space $\cK := \rL^2(A \rtimes \Gamma')p_{\Ker \delta}$. We can also write this bimodule as
$$\bim{\gamma(A \rtimes \Gamma')}{\bigl(\C^m \ot \rL^2(B_1 \rtimes \Lambda\dpr)\bigr)}{B_1 \rtimes \Lambda\dpr}$$
where $\gamma : A \rtimes \Gamma' \recht \M_m(\C) \ot (B_1 \rtimes \Lambda\dpr)$ is a finite index inclusion satisfying
$$\gamma(a) = \sum_{i=1}^m e_{ii} \ot (a_{|X_i} \circ \Delta_i^{-1}) \quad\text{for all}\;\; a \in A \; .$$
Since $T(x) \in \Lambdatil \cdot \Delta(x)$ for almost all $x \in X$, we can take $W \in \M_{m,k}(\C) \ot N$ satisfying $W W^* = 1 \ot q_1$, $W^* W = q'$ and $\gamma(a) = W \psi'(a) W^*$ for all $a \in A$. Replace $\psi'$ by $W \psi'(\,\cdot\,)W$.

It follows that, for all $g \in \Gamma'$, $\psi'(u_g) \gamma(u_g)^*$ commutes with $\gamma(A) = \psi'(A) = \D_m(\C) \ot B_1$, which is maximal abelian in $\M_m(\C) \ot q_1 N q_1$. So, $\psi'(u_g) = \gamma(\om_g u_g)$ where $\om_g \in \cU(A)$ satisfies
$$\om_g \si_g(\om_h) = \Omega(g,h) \om_{gh} \; .$$
By Lemma \ref{lemma.weakcocycle} we find $w \in \cU(A)$ and a map $\mu : \Gamma' \recht S^1$ such that $\om_g = \mu(g) w \si_g(w^*)$ for all $g \in \Gamma'$. Replacing $g \mapsto \pi(g)$ by $g \mapsto \overline{\mu(g)} \pi(g)$ and replacing $\psi'$ by $(\Ad \psi'(w)^*) \circ \psi'$, we may assume that $\pi$ is an ordinary unitary representation (i.e.\ $\Omega = 1$) and that $\psi'(z) = \gamma(z)$ for all $z \in A \rtimes \Gamma'$. Since $\psi'(A \rtimes \Gamma')$ has finite index in $\M_m(\C) \ot q_1 N q_1$ and since $\gamma(A \rtimes \Gamma')$ is contained in $(B \rtimes \Lambda\dpr)^m$, it follows that $\Lambda\dpr < \Lambda$ is a finite index subgroup.

If $s \in \Lambda$, then $\Delta^{-1}(s \cdot Y' \cap Y')$ is globally invariant under the finite index subgroup $\delta^{-1}(\Lambda\dpr \cap s \Lambda\dpr s^{-1})$ of $\Gamma'$ and hence, must have measure $0$ or $1$. So, either $s \cdot Y' \cap Y'$ has measure zero, or $s \cdot Y'$ equals $Y'$ up to measure zero. Define the subgroup $\Lambda' < \Lambda$ consisting of those $s \in \Lambda$ for which $s \cdot Y'$ equals $Y'$ up to measure zero. By construction, $\Lambda\dpr < \Lambda' < \Lambda$ and $\Lambda \actson Y$ is induced from $\Lambda' \actson Y'$.

The theorem now follows by applying Lemma \ref{lemma.finitequotient} to the actions $\Gamma' \actson (X,\mu)$ and $\Lambda\dpr \actson Y'$.
\end{proof}

\end{document}